%

\documentclass [10pt]{amsart}
\usepackage{amsmath,amsfonts,amsthm,amssymb,pxfonts}
\usepackage{Tabbing}
\usepackage{lastpage}
\usepackage{mathtools}
\usepackage{esint}
\usepackage{mathrsfs}

 \textheight=8.2 true in
    \textwidth=5.0 true in
     \topmargin 30pt
      \setcounter{page}{1}
 
\newtheorem{theorem}{Theorem}

\newtheorem{lemma}[theorem]{Lemma}

\newtheorem{corollary}[theorem]{Corollary}
\newtheorem{proposition}[theorem]{Proposition}


\begin{document}

\title[Convergence of Strong Arithmetic Means of Fourier Series]{On almost everywhere convergence of strong arithmetic means of Fourier series}

\author{Bobby Wilson}
\address{Department of Mathematics, The University of Chicago, 5734 South University Avenue, Chicago, IL 60615, U.S.A.}
\thanks{This is in partial fulfillment of the author's requirements for the Doctor of Philosophy degree in Mathematics at the University of Chicago}

\subjclass[2010]{42A20, 42A24}

\keywords{Fourier series on $L^1$, $\ell^2$ averages, Calder\'{o}n-Zygmund decomposition, partial sums, weak $L^1$, strong arithmetic means}

\begin{abstract}
This article establishes a real-variable argument for Zygmund's theorem on almost everywhere convergence of strong arithmetic means of partial sums of Fourier series on $\mathbb{T}$, up to passing to a subsequence. Our approach extends to, among other cases, functions that are defined on $\mathbb{T}^d$, which allows us to establish an analogue of Zygmund's theorem in higher dimensions.
\end{abstract}

\maketitle

\section{Introduction}
Let $\mathbb{T}= \mathbb{R}/\mathbb{Z}$ and denote by $S_nf(\theta)$ the partial sum of the Fourier series of $f$ with respect to the trigonometric system.  
Thus, with $e(\theta):=e^{2\pi i \theta}$  we have for any $f\in L^1(\mathbb{T})$ and nonnegative integer~$n$, 
\[
(S_nf)(\theta)= \sum_{j= -n}^n \widehat{f}\,(j)\, e(j\theta)
\]
The most classical questions in Fourier analysis concern the convergence of this sequence of partial sums. "Convergence" here of course admits many different interpretations. 
The archetypical notion of convergence, i.e., the pointwise sense, was eventually replaced by more flexible and robust ones, such as almost everywhere or $L^p$ convergence. 
Many  investigations into fine convergence properties of Fourier series ensued, such as by the Russian school, Zygmund's school, as well as by Hardy and Littlewood. One result of these efforts was the formulation of the {\em Lusin conjecture} in 1915:
 for every $f\in L^2(\mathbb{T})$ the partial sums $S_n f$ converge almost everywhere.  
In contrast to this assertion,  Kolmogoroff~\cite{kolm23} famously constructed an $L^1$ function in 1923 for which $S_nf$ diverges almost everywhere. 
Cast in modern terminology, he showed that there exists  $f\in L^1(\mathbb{T})$ such that  the {\em Carleson maximal function}  
\begin{align*}
\mathcal{C}f := \sup_{n\ge0} |S_nf|
\end{align*}
satisfies $$|\{\mathcal{C}f<\infty\}|=0, $$ see for example Chapter~6 of~\cite{schlag13}. 
In a major breakthrough,  Carleson~\cite{carl66} proved Lusin's conjecture  in 1966.  This was extended to $f \in L^p(\mathbb{T})$ for any $p>1$ by~Hunt \cite{hun68}. 
For more historical background, as well as for a modern presentation of these results, see  the book~\cite{2schlag13}. 

However, research into Fourier series did not limit itself to the basic investigation of convergence of the partial sum sequence. 
Let us give three examples of finer questions:
\begin{itemize}
\item
How fast may  partial Fourier sums grow for a given $f \in L^1(\mathbb{T})$? 
\item Given $f\in L^1$, what can we say about the density of possible subsequences $\{n_k\}$ in $\mathbb{N}$
for which $\{S_{n_k}f\}$ does converge?
\item 
  Does there exists a sequence $\{M_j\}$ of positive integers such that for the Fourier series of any $f \in L^1(\mathbb{T})$ we may find a subsequence $\{S_{m_j}f\}$ of its partial sums such that $m_j\leq M_j$ and $S_{m_j}f \rightarrow f$ almost everywhere in $\mathbb{T}$? This is known as Ul'yanov's problem.   
\end{itemize}

\noindent As for the first question, 
 Hardy's classical inequality \cite{hard13} states that
\begin{align*}
\left|S_nf(\theta)\right|=o(\log n)\text{\  \ as\ \  } n \rightarrow \infty
\end{align*}
Concerning the second question,  Zygmund (\cite{zygmund59}, Ch. 13) proved the following result. 
\begin{theorem} \label{zygmund}
For every $f \in L^1(\mathbb{T})$ and for almost every $\theta \in \mathbb{T}$, there exists a sequence $\{n_k\}$ (depending on~$\theta$) such that $S_{n_k}f(\theta) \rightarrow f(\theta)$ and
\begin{align*}
\frac{\#(\{n_k\} \cap [0,N])}{N} \xrightarrow{N\rightarrow \infty} 1
\end{align*}
\end{theorem}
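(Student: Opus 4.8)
The plan is to deduce Theorem~\ref{zygmund} from a \emph{strong summability} estimate along the lacunary scales $N=2^{j}$:
\[
\frac{1}{2^{j}}\sum_{n=0}^{2^{j}-1}\bigl|S_{n}f(\theta)-f(\theta)\bigr|^{2}\ \xrightarrow{\ j\to\infty\ }\ 0\qquad\text{for a.e.\ }\theta\in\mathbb T .
\]
Working along $N=2^{j}$ rather than along all $N$ costs nothing for the stated conclusion: if a set $E(\theta)\subseteq\mathbb N$ satisfies $2^{-j}\#\bigl(E(\theta)\cap[0,2^{j})\bigr)\to0$, then for $N\in[2^{j},2^{j+1})$ one has $N^{-1}\#\bigl(E(\theta)\cap[0,N)\bigr)\le 2\cdot 2^{-(j+1)}\#\bigl(E(\theta)\cap[0,2^{j+1})\bigr)\to0$, so $E(\theta)$ has full density zero. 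Granting the displayed estimate, fix a $\theta$ for which it holds; by Chebyshev's inequality each set $E_{m}(\theta):=\{\,n:\ |S_{n}f(\theta)-f(\theta)|>1/m\,\}$ then has density zero, and a routine diagonalization over $m\in\mathbb N$ produces a single set $G(\theta)$ of density zero such that $\{n_{k}\}:=\mathbb N\setminus G(\theta)$ has density $1$ and $S_{n_{k}}f(\theta)\to f(\theta)$. This $\{n_{k}\}$ is the sequence the theorem asks for, and the ``passing to a subsequence'' alluded to in the paper's description is exactly this reduction to the scales $2^{j}$.

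To prove the strong summability I would use the standard density-plus-maximal-function mechanism. Set
\[
\mathcal Mf(\theta):=\sup_{j\ge1}\Bigl(\frac{1}{2^{j}}\sum_{n=0}^{2^{j}-1}|S_{n}f(\theta)|^{2}\Bigr)^{1/2}.
\]
Trigonometric polynomials $P$ are dense in $L^{1}(\mathbb T)$, and writing $f=P+h$ we have $S_{n}f(\theta)-f(\theta)=S_{n}h(\theta)-h(\theta)$ once $n\ge\deg P$, whence $\limsup_{j}\,2^{-j}\sum_{n<2^{j}}|S_{n}f(\theta)-f(\theta)|^{2}\le 2\,\mathcal Mh(\theta)^{2}+2\,|h(\theta)|^{2}$. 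Letting $\|h\|_{1}=\|f-P\|_{1}\to0$, the strong summability follows once we know the weak-type maximal inequality
\[
\bigl|\{\theta\in\mathbb T:\ \mathcal Mf(\theta)>\lambda\}\bigr|\ \le\ \frac{C\,\|f\|_{L^{1}(\mathbb T)}}{\lambda},\qquad\lambda>0,
\]
together with its $L^{2}$ companion $\|\mathcal Mf\|_{L^{2}(\mathbb T)}\lesssim\|f\|_{L^{2}(\mathbb T)}$ (needed only for the good part below).

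Both estimates I would derive from a Calder\'{o}n--Zygmund decomposition. For the $L^{2}$ bound, split $S_{n}f=S_{2^{i-1}}f+R^{i}_{n}f$ for $2^{i-1}\le n<2^{i}$, where $R^{i}_{n}f$ is the partial sum up to frequency $n$ of the Littlewood--Paley block $\Delta_{i}f:=S_{2^{i}}f-S_{2^{i-1}}f$; averaging in $n$ and summing the geometric weights gives
\[
\mathcal Mf(\theta)^{2}\ \lesssim\ \sup_{i\ge1}|S_{2^{i-1}}f(\theta)|^{2}+\sup_{i\ge1}\Bigl(2^{-(i-1)}\!\!\sum_{2^{i-1}\le n<2^{i}}\!\!|R^{i}_{n}f(\theta)|^{2}\Bigr)+|\widehat{f}(0)|^{2},
\]
where the first term is the classical $L^{2}$-bounded lacunary maximal partial-sum operator, the second has $L^{1}$-norm at most $\sum_{i\ge1}\|\Delta_{i}f\|_{2}^{2}\le\|f\|_{2}^{2}$ by Plancherel, and the third is harmless. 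For the weak-type bound, decompose $f=g+b$ at height $\lambda$ with $\|g\|_{\infty}\lesssim\lambda$, $\|g\|_{1}\le\|f\|_{1}$, and $b=\sum_{Q}b_{Q}$ over pairwise disjoint dyadic arcs with $\int_{Q}b_{Q}=0$, $\|b_{Q}\|_{1}\lesssim\lambda|Q|$ and $\sum_{Q}|Q|\lesssim\|f\|_{1}/\lambda$. The good part is dispatched by $\|\mathcal Mg\|_{2}^{2}\lesssim\|g\|_{2}^{2}\le\|g\|_{\infty}\|g\|_{1}\lesssim\lambda\|f\|_{1}$ and Chebyshev; for the bad part one must bound $\mathcal Mb(\theta)$ off the exceptional set $\Omega^{*}:=\bigcup_{Q}3Q$ (of measure $\lesssim\|f\|_{1}/\lambda$), using the vanishing mean of each $b_{Q}$ to express $2^{-j}\sum_{n<2^{j}}|S_{n}b_{Q}(\theta)|^{2}$ as an integral against the second $c_{Q}$-difference of the averaged kernel $2^{-j}\sum_{n<2^{j}}D_{n}(\theta-t)D_{n}(\theta-t')$ (here $D_{n}$ is the Dirichlet kernel and $c_{Q}$ the centre of $Q$), and then summing over $Q$ against $\sum_{Q}|Q|\lesssim\|f\|_{1}/\lambda$.

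\textbf{The main obstacle} is this last step. A single Dirichlet kernel is \emph{not} a Calder\'{o}n--Zygmund kernel --- it decays only like $|x|^{-1}$, and $|D_{n}'(x)|$ grows like $n/|x|$ --- so a term-by-term estimate of $2^{-j}\sum_{n<2^{j}}|S_{n}b_{Q}(\theta)|^{2}$ produces factors (roughly $2^{2j}|Q|^{4}/\mathrm{dist}(\theta,Q)^{2}$) that blow up with $j$ and cannot be summed. One is forced to exploit genuine cancellation in the sum over $n$: decompose the averaged kernel by Littlewood--Paley, so that its $2^{i}$-frequency block is essentially a modulated Fej\'{e}r-type kernel concentrated at scale $2^{-i}$ --- for which the mean-value estimates no longer see $j$ --- and sum the block contributions over $i$ by the usual dyadic dichotomy according to whether $2^{i}\,\mathrm{dist}(\theta,Q)$ is large or small, the $\ell^{2}$ average over $n$ supplying the extra decay in $i$ needed for convergence. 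This is precisely where passing from an individual partial sum (\`{a} la Carleson) to the ``strong arithmetic mean'' $\ell^{2}$ average renders an elementary real-variable proof possible. Finally, with arcs replaced by cubes, the Dirichlet kernel by its $d$-dimensional (cubical) analogue, and the dyadic frequency decomposition by its higher-dimensional version, the same Calder\'{o}n--Zygmund argument yields the announced analogue of Zygmund's theorem on $\mathbb T^{d}$.
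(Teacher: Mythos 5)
Your overall architecture is sound in places: the observation that density one along the lacunary scales $N=2^{j}$ already implies density one along all $N$ is correct, and your extraction of the sequence $\{n_k\}$ from a.e.\ strong summability (Chebyshev counting in $m$ plus diagonalization) is exactly the counting argument the paper uses in the proof of Corollary~\ref{converge high d}. The $L^2$ bound for your lacunary maximal operator via Littlewood--Paley blocks is also fine. But be aware of what the paper actually does for Theorem~\ref{zygmund}: it is quoted as Zygmund's classical theorem (complex-variable proof of Theorem~\ref{strong sums}); the paper's own real-variable machinery (Theorem~\ref{first thing} plus the argument in the Conclusion) only yields convergence in measure of the strong means, hence a.e.\ convergence along a subsequence $\{N_k\}$ depending on $f$, and the paper explicitly lists the full Theorems~\ref{zygmund} and~\ref{strong sums} as an open question. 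So you are attempting something strictly stronger than the paper proves, and everything hinges on your claimed weak-type $(1,1)$ inequality for $\mathcal{M}f=\sup_{j}\bigl(2^{-j}\sum_{n<2^{j}}|S_nf|^2\bigr)^{1/2}$. Note that Theorem~\ref{first thing} does not imply it: there the supremum over $N$ sits outside the integral over $\mathbb{T}\setminus E$, which gives a Chebyshev bound for each fixed $N$ but cannot be summed over the lacunary $j$'s to control the set where the supremum is large; this is precisely why the paper only obtains a subsequence.

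The genuine gap is the bad part of your Calder\'on--Zygmund scheme, and it is not a routine completion. Your plan is to bound $\int_{(3Q)^{c}}\mathcal{M}b_{Q}$ by $C\|b_{Q}\|_{1}$ using the vanishing mean of $b_Q$ and then sum over $Q$; but this per-atom estimate is false by a logarithm. Take $b_{Q}$ a mean-zero dipole at scale $h$ (say $\tfrac{1}{2}(\delta_{h/2}-\delta_{-h/2})$, suitably mollified). For $|\theta|\gg h$ one has $S_nb_Q(\theta)\approx -\cos((2n+1)\pi\theta)\sin((2n+1)\pi h/2)/\sin(\pi\theta)$, and choosing the scale $2^{j}\sim 1/h$ in the supremum, a positive proportion of the $n<2^{j}$ contribute $\sin^2((2n+1)\pi h/2)\gtrsim 1$, so $\mathcal{M}b_{Q}(\theta)\gtrsim |\theta|^{-1}$ on a large portion of every dyadic annulus $h\ll|\theta|\ll1$; hence $\int_{(3Q)^{c}}\mathcal{M}b_{Q}\gtrsim\log(1/h)\,\|b_{Q}\|_{1}$. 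The $\ell^{2}$ average in $n$ does not rescue the estimate, because the supremum over $j$ lets the scale $2^{j}$ tune itself to the cube, which is exactly the source of the loss; your proposed Littlewood--Paley decomposition of the averaged bilinear kernel cannot undo a lower bound. Consequently the "sum over $Q$" step collapses, and whether the weak $(1,1)$ maximal bound holds at all (it is not contradicted by this example, since the failure is of the $L^1$ off-support bound, not of the distributional inequality) is essentially the open problem the paper sidesteps by proving the exceptional-set estimate with its covering Lemma~\ref{lemma} instead of a maximal inequality. As it stands, the central estimate of your proof is neither proved nor reducible to the techniques you sketch, so the proposal does not yet prove Theorem~\ref{zygmund}; the same missing estimate undercuts the claimed higher-dimensional extension.
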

A partial answer to the third question was found by Konyagin \cite{kon04}: 
\begin{theorem}
There exists a sequence $\{M_j\}$ such that for every function $f \in L^1(\mathbb{T})$ there is an increasing sequence $\{m_j\}$ 
such that $m_j \leq M_j$ for infinitely many $j$ and $S_{m_j}f \rightarrow f$ almost everywhere.
\end{theorem}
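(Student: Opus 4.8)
The plan is to deduce this theorem from an averaged, quantitative reformulation of Zygmund's Theorem~\ref{zygmund}. For $f\in L^1(\mathbb{T})$ and $\epsilon>0$ set
\[
\rho_f(N,\epsilon):=\frac1N\int_{\mathbb{T}}\#\{\,1\le n\le N:\ |S_nf(\theta)-f(\theta)|>\epsilon\,\}\,d\theta
=\frac1N\sum_{n=1}^N\bigl|\{|S_nf-f|>\epsilon\}\bigr|.
\]
By Theorem~\ref{zygmund}, for a.e.\ $\theta$ the set of $n$ with $|S_nf(\theta)-f(\theta)|>\epsilon$ has density $0$, so the integrand above is at most $1$ and tends to $0$ pointwise a.e.; dominated convergence then gives $\rho_f(N,\epsilon)\to0$ as $N\to\infty$ for each fixed $\epsilon$. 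The rate of this decay depends on $f$ and may be arbitrarily slow, and it is precisely this $f$-dependence that forces one to settle for $m_j\le M_j$ at infinitely many $j$ rather than all $j$.

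Next I would fix, once and for all and independently of $f$, a rapidly increasing sequence $\{M_j\}$ with $M_j\ge 2M_{j-1}$, and write $B_j=(M_{j-1},M_j]$, so that $|B_j|\ge M_j/2$. For any $f$ with $\|f\|_1\le1$ and any $\epsilon>0$,
\[
\frac1{|B_j|}\sum_{n\in B_j}\bigl|\{|S_nf-f|>\epsilon\}\bigr|\le\frac{M_j}{|B_j|}\,\rho_f(M_j,\epsilon)\le 2\rho_f(M_j,\epsilon),
\]
so some index $m=m(j,\epsilon)\in B_j$ satisfies $\bigl|\{|S_mf-f|>\epsilon\}\bigr|\le 2\rho_f(M_j,\epsilon)$, and of course $m\le M_j$. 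Now diagonalize over $\epsilon=1/l$: using that $\rho_f(M_j,1/l)\to0$ as $j\to\infty$ for each fixed $l$, choose (depending on $f$) indices $j_1<j_2<\cdots$ with $\rho_f(M_{j_l},1/l)\le 4^{-l}$ and put $m_{j_l}:=m(j_l,1/l)$. Then $m_{j_l}\le M_{j_l}$, the $m_{j_l}$ are increasing, and $\sum_l\bigl|\{|S_{m_{j_l}}f-f|>1/l\}\bigr|\le\sum_l 2\cdot 4^{-l}<\infty$, so by Borel--Cantelli $S_{m_{j_l}}f(\theta)\to f(\theta)$ for a.e.\ $\theta$. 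This already produces, inside the fixed envelope, a convergent subsequence of partial sums at the ($f$-dependent) positions $j_l$.

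It then remains to promote the sparse sequence $\{m_{j_l}\}_l$ to a single increasing sequence $\{m_j\}_{j\in\mathbb{N}}$ all of whose tail terms converge to $f$ a.e.\ and with $m_j\le M_j$ infinitely often. The natural route is to observe, by running the same averaging argument along an $f$-dependent (possibly highly lacunary) sequence of scales $R_k$ with $R_k\ge 2R_{k-1}$, that $f$ admits at least one a.e.\ convergent subsequence of partial sums, and then to interleave this subsequence with $\{m_{j_l}\}_l$, keeping the union in increasing order: every term of the resulting sequence then converges to $f$ a.e., while the terms inherited from $\{m_{j_l}\}_l$ retain a chance at the envelope condition. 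Carrying out the interleaving so that each retained $m_{j_l}$ actually lands at a sequence position large enough that $m_{j_l}\le M_{j_l}$ yields $m_j\le M_j$ is the delicate point; a crude union bound over many good indices per block overshoots the Borel--Cantelli budget (a fixed fraction of $B_{j_l}$ is good only on average), so one is led either to enrich this supply carefully or to replace the union bound by a sharper stopping-time type selection.

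I expect this assembly step to be the main obstacle. The a.e.\ convergence itself is cheap — Theorem~\ref{zygmund} plus a one-line Borel--Cantelli estimate — and on a dense class the statement is trivial, since $S_nP=P$ for every trigonometric polynomial $P$ once $n\ge\deg P$, so one may first reduce to $f$ of small $L^1$ norm. The real difficulty is that $\{M_j\}$ must be chosen before $f$, whereas $\rho_f(\cdot,\epsilon)$ decays at an $f$-dependent pace that no fixed envelope can overtake at every scale; insisting on $m_j\le M_j$ for \emph{all} $j$ is exactly Ul'yanov's problem and is out of reach by this scheme, while ``infinitely many $j$'' is what permits the diagonalization to pick, for each $f$, a favourable infinite set of scales. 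Since no uniform weak-type $(1,1)$ bound is available for the individual operators $S_n$ on $L^1$, the passage through the averaged quantity $\rho_f$ rather than through $\sup_n|S_nf|$ is essential, and it is this $\ell^2$/averaged viewpoint, developed in the body of this paper, that makes such an argument go.
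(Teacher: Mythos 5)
You should first note that the paper contains no proof of this statement at all: it is Konyagin's theorem, quoted from \cite{kon04} as background, so there is nothing internal to compare with and your argument must stand on its own. The first half of it does: $\rho_f(N,\epsilon)\to 0$ follows from Theorem \ref{zygmund} together with dominated convergence, the block/pigeonhole selection produces in each block $B_{j_l}$ an index $m_{j_l}\le M_{j_l}$ with $\bigl|\{|S_{m_{j_l}}f-f|>1/l\}\bigr|\le 2\cdot 4^{-l}$, and Borel--Cantelli gives a.e.\ convergence along the sparse sequence $\{m_{j_l}\}_l$. But, as you yourself observe, this is not the theorem: the statement requires one increasing sequence $\{m_j\}_{j\ge 1}$, with $S_{m_j}f\to f$ a.e.\ along the \emph{entire} sequence and $m_j\le M_j$ at infinitely many of the \emph{same} indices $j$; re-indexing the sparse sequence destroys the envelope condition (you only get $m'_l\le M_{j_l}$, not $m'_l\le M_l$), so the filler terms between anchors are unavoidable, and your proposal stops exactly where they have to be produced.

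That gap is genuine, not a routine patch. The number of filler positions you are forced to spend between consecutive anchors is governed by how far out, relative to the pre-chosen $\{M_j\}$, one must go before the good sets $G_{\epsilon,\eta}=\{n:\ |\{|S_nf-f|>\epsilon\}|\le\eta\}$ become plentiful; this burn-in scale is $f$-dependent, admits no uniform rate over the unit ball of $L^1$, and cannot be dominated by any envelope fixed before $f$. Every filler must itself be controlled to get whole-sequence a.e.\ convergence, and the only mechanism in your scheme is Borel--Cantelli with a summable budget; but the filler count per stage grows with the unknown burn-in, while the admissible per-term measure bound must shrink like budget divided by count, and this trade-off is circular: making $\eta$ smaller enlarges the burn-in, which enlarges the count, which forces $\eta$ smaller still. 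An adversarial $f$, chosen after $\{M_j\}$ (for instance with $\rho_f(M_j,\epsilon)$, or the degree of trigonometric polynomial approximation needed for a prescribed $L^1$ error, growing along the fixed scales $M_j$ faster than any such bookkeeping tolerates), defeats every variant of this soft argument, including the sharpened supply one gets from the paper's quantitative bound, which only yields $\rho_f(N,\epsilon)\lesssim\|f\|_1/\epsilon$ uniformly in $N$ rather than decay at the needed scales. This is precisely where the content of Konyagin's theorem lies; his proof is genuinely quantitative, and the tower-type growth of $M_j$ mentioned in the paper reflects iterated quantitative losses, not a soft corollary of Theorem \ref{zygmund}. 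So your proposal establishes a strictly weaker statement and leaves the essential step open.
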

Here $M_j$ grows faster than any multiple iteration of the exponent.  A more thorough review of these results appears in Konyagin's survey~\cite{kon06}.  

\bigskip 
In this paper we revisit Zygmund's classical Theorem \ref{zygmund}, our main goal being a higher-dimensional version of his result.
Before going into more details, we remark that Zygmund deduced~Theorem \ref{zygmund} from 
 the following  
asymptotic vanishing of strong arithmetic means of Fourier partial sums:
\begin{theorem}\label{strong sums}
For any $f \in L^1(\mathbb{T})$, 
\begin{align*}
\frac{1}{N} \sum_{n=1}^N \left|S_n f(\theta)-f(\theta)\right|^r \rightarrow 0
\end{align*}
for all $r>0$ and for almost every $\theta \in \mathbb{T}$.
\end{theorem}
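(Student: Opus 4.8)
\medskip
\noindent\emph{A proof strategy.}
The plan is to treat $r=2$ and to reduce the theorem to a weak-type maximal inequality. For $0<r\le 2$, Hölder's inequality gives $\frac1N\sum_{n=1}^N|a_n|^r\le\bigl(\frac1N\sum_{n=1}^N|a_n|^2\bigr)^{r/2}$, so it is enough to take $r=2$; the range $r>2$ requires only cosmetic changes, replacing $\ell^2$-averages by $\ell^r$-averages and using the $L^r$-boundedness of the partial-sum operators. Since $\frac1N\sum_{n=1}^N|S_nf-f|^2\le 2\cdot 2^{-j}\sum_{n=1}^{2^{j}}|S_nf-f|^2$ whenever $2^{j-1}\le N<2^{j}$, it suffices to prove a.e.\ convergence to $0$ along $N=2^{j}$. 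Decomposing $S_nf-f=(S_nf-\sigma_{2^{j}}f)+(\sigma_{2^{j}}f-f)$, where $\sigma_m$ denotes the Fej\'er means, the second difference is independent of $n$ and its contribution $|\sigma_{2^{j}}f(\theta)-f(\theta)|^2$ tends to $0$ a.e.\ by the classical a.e.\ convergence of Fej\'er means; hence the theorem reduces to the estimate
\begin{align*}
\Bigl|\Bigl\{\theta\in\mathbb{T}:\ \sup_{j\ge 0}\ \frac{1}{2^{j}}\sum_{n=1}^{2^{j}}\bigl|S_nf(\theta)-\sigma_{2^{j}}f(\theta)\bigr|^{2}>\lambda^{2}\Bigr\}\Bigr|\ \lesssim\ \frac{\|f\|_{L^1(\mathbb{T})}}{\lambda}\,.
\end{align*}
Indeed, for a trigonometric polynomial $P$ one has $2^{-j}\sum_{n\le 2^{j}}|S_nP(\theta)-\sigma_{2^{j}}P(\theta)|^2\to 0$ for every $\theta$ (only finitely many $S_nP-P$ are nonzero, and $\sigma_{2^{j}}P\to P$), so the usual density/maximal-function argument upgrades the displayed bound to a.e.\ convergence for all $f\in L^1(\mathbb{T})$.

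To prove that estimate, perform a Calder\'on--Zygmund decomposition $f=g+b$ at a height $h$ (to be taken $\asymp\lambda$ at the end): $\|g\|_{L^\infty}\lesssim h$ and $\|g\|_{L^2}^2\lesssim h\|f\|_{L^1}$, while $b=\sum_Q b_Q$ with the $b_Q$ supported on disjoint arcs $Q$ forming $\Omega=\bigcup_Q Q$, $\int b_Q=0$, $\|b_Q\|_{L^1}\lesssim h|Q|$, and $|\Omega|\lesssim\|f\|_{L^1}/h$. Writing $\mathcal{A}\varphi(\theta)=\sup_j 2^{-j}\sum_{n\le 2^{j}}|S_n\varphi(\theta)-\sigma_{2^{j}}\varphi(\theta)|^2$, one has $\mathcal{A}f\lesssim\mathcal{A}g+\mathcal{A}b$, and it remains to estimate each piece. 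For $g$ it suffices to prove $\|\mathcal{A}g\|_{L^1}\lesssim\|g\|_{L^2}^2$, which by Chebyshev's inequality contributes $\lesssim h\|f\|_{L^1}/\lambda^2$ to the measure above. Splitting $S_ng-\sigma_{2^{j}}g=(S_ng-\sigma_n g)+(\sigma_ng-\sigma_{2^{j}}g)$, the second difference is bounded by $2\sup_m|\sigma_m g|\lesssim Mg$, the Hardy--Littlewood maximal function, whose $L^2$ norm is $\lesssim\|g\|_{L^2}$; for the first, $\widehat{(S_ng-\sigma_ng)}(k)=\tfrac{|k|}{n+1}\,\widehat g(k)\,\mathbf{1}_{|k|\le n}$, so by Plancherel $\|S_ng-\sigma_ng\|_{L^2}^2=(n+1)^{-2}\sum_{|k|\le n}k^2|\widehat g(k)|^2$, and grouping $n$ dyadically (using $\sup_{j\ge0} 2^{-j}\sum_{i=0}^{j}2^{i}y_i\le 2\sup_i y_i$ for $y_i\ge0$) and then summing the geometric series in $k$ yields $\int_{\mathbb{T}}\sup_j 2^{-j}\sum_{n\le 2^{j}}|S_ng-\sigma_ng|^2\lesssim\sum_k|\widehat g(k)|^2=\|g\|_{L^2}^2$, with no logarithmic loss.

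The crux is the bad part. Off the dilated exceptional set $\Omega^*=\bigcup_Q 3Q$ (still $|\Omega^*|\lesssim\|f\|_{L^1}/h$) the goal is the \emph{diagonal} estimate
\begin{align*}
\mathcal{A}b(\theta)\ \lesssim\ \sum_Q\frac{\|b_Q\|_{L^1}^2}{|\theta-c_Q|^{2}}\qquad(\theta\notin\Omega^*),
\end{align*}
$c_Q$ being the centre of $Q$; granting this, $\int_{(\Omega^*)^c}\mathcal{A}b\lesssim\sum_Q\|b_Q\|_{L^1}^2/|Q|\lesssim h^2|\Omega|\lesssim h\|f\|_{L^1}$, so this part too contributes $\lesssim h\|f\|_{L^1}/\lambda^2$, and the choice $h\asymp\lambda$ closes the estimate. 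To obtain the diagonal bound, expand $|S_nb(\theta)|^2=\sum_{Q,Q'}S_nb_Q(\theta)\,\overline{S_nb_{Q'}(\theta)}$ and average the Dirichlet kernels: with $D_n(x)=\sin((2n+1)\pi x)/\sin(\pi x)$,
\begin{align*}
\frac{1}{2^{j}}\sum_{n=1}^{2^{j}}D_n(\theta-t)\,D_n(\theta-s)=\frac{\Phi_{2^{j}}(t-s)-\Phi_{2^{j}}(2\theta-t-s)}{2\sin\pi(\theta-t)\sin\pi(\theta-s)}\,,
\end{align*}
where $\Phi_M(x)=M^{-1}\sum_{n=1}^M\cos((2n+1)\pi x)$ satisfies $|\Phi_M(x)|\lesssim\min(1,(M|x|)^{-1})$, together with corresponding bounds on its derivatives. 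For $\theta\notin\Omega^*$ and $t\in Q$ the denominator is $\asymp|\theta-c_Q|\,|\theta-c_{Q'}|$ and of fixed sign; the diagonal terms $Q=Q'$ produce exactly $\|b_Q\|_{L^1}^2/|\theta-c_Q|^2$ (which cannot be improved when $b_Q$ concentrates, since $\sin^2$ averages to $\tfrac12$), while for $Q\ne Q'$ one combines the cancellation $\int b_Q=\int b_{Q'}=0$, the smoothness of the denominator, and the decay of $\Phi_{2^{j}}$ (its argument being bounded below by the separation of $Q$ and $Q'$, with a higher-order gain when that separation is small) to see that the off-diagonal contribution is dominated by the diagonal. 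The Fej\'er term $\sigma_{2^{j}}b$ is handled similarly and is of lower order.

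The main obstacle is precisely this off-diagonal analysis. Unlike a Calder\'on--Zygmund kernel, $D_n$ decays only like $|x|^{-1}$, so each $\mathcal{A}b_Q$ carries a non-integrable tail $\asymp\|b_Q\|_{L^1}^2/|\theta-c_Q|^2$; estimating $\mathcal{A}b$ by the square of $\sum_Q\|b_Q\|_{L^1}/|\theta-c_Q|$ would cost a logarithm, which must instead be recovered from the averaging in $n$ that annihilates the oscillatory cross-terms. Controlling the supremum over $j$ (rather than a single $j$) is a secondary point, handled for $g$ by the dyadic-block estimate and for $b$ by the explicit kernel bounds; if one is content to let the strong means vanish along a fixed lacunary subsequence, this step simplifies. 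Finally, since the whole argument is real-variable and localizes well, on $\mathbb{T}^d$ one replaces $D_n$ by a product of one-dimensional Dirichlet kernels and the arcs $Q$ by $d$-dimensional cubes, the product structure being compatible with the Calder\'on--Zygmund geometry; this is what produces the higher-dimensional analogue of Zygmund's theorem.
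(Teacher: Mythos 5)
Your scaffolding is sound (reduction to $r=2$ and to dyadic $N$, the Fej\'er comparison, the Calder\'on--Zygmund splitting, and the good-part bound $\|\mathcal{A}g\|_{L^1}\lesssim\|g\|_{L^2}^2$ via Plancherel with no log loss), but the whole argument stands or falls on the bad-part claim $\mathcal{A}b(\theta)\lesssim\sum_Q\|b_Q\|_{L^1}^2|\theta-c_Q|^{-2}$ for $\theta\notin\Omega^*$, and this you never prove: you only assert that the mean-zero cancellation, the smoothness of the denominator and the decay of $\Phi_{2^j}$ make the off-diagonal terms dominated by the diagonal. That is precisely the hard step, and pairwise absolute estimates do not give it. For instance, take $M$ mean-zero blocks of equal length $\delta$ packed in a cluster at distance $A$ from $\theta$; at the critical scale $2^{j}\asymp\delta^{-1}$ the bounds $|\Phi_{2^j}(d)|\lesssim(2^{j}d)^{-1}$ and the mean-zero gain $\delta\sup|\Phi_{2^j}'|$ are of the same size, and summing them over pairs yields an off-diagonal contribution of order $\log M$ times the diagonal. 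Beating that requires genuine orthogonality in the $n$-average (Plancherel in $n$) applied to $b$ itself, which is obstructed by the fact that $b$ is only in $L^1$, so $|b|^2$ need not be integrable; it is therefore not even clear that your pointwise maximal ``diagonal domination'' is true as stated, and no argument for it is given.

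It is worth measuring this against what the paper actually does: it never proves a weak-type bound for the maximal operator $\sup_j$. Theorem \ref{first thing} gives, after removing a single exceptional set $E$ \emph{independent of} $N$, the uniform-in-$N$ integrated bound $\sup_N N^{-1}\sum_{n\le N}\int_{\mathbb{T}\setminus E}|S_nf|^2\lesssim\lambda\|f\|_1^2$, proved with Plancherel in $n$, de la Vall\'ee Poussin regularization and Bernstein, and the covering Lemma \ref{lemma} to control the overlap of the dilated bad intervals; from this the paper deduces only convergence in measure of the strong means, hence Theorem \ref{strong sums} along a subsequence, and it explicitly lists the full statement as an open problem for its real-variable method (the complete theorem rests on Zygmund's complex-variable proof). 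Your maximal inequality, if established, would resolve exactly that open question, so the missing off-diagonal analysis is not a routine verification but the crux. Finally, the remark that $r>2$ needs ``only cosmetic changes'' via $L^r$-boundedness of the partial sums is unsupported: the bad part is only in $L^1$, and the paper's Proposition \ref{p>2} shows that the analogous estimate for $p>2$ already incurs a $\log^{p-2}N$ loss with indications that a clean bound may fail, so the case $r>2$ of the stated theorem is not covered by your sketch either (though $0<r\le2$ would suffice for the density consequence).
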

In fact, Theorem \ref{strong sums} with a {\em single} $r>0$ suffices in order to deduce Theorem~\ref{zygmund}.  
Zygmund based his proof of Theorem \ref{strong sums} on complex variables. To be more specific, his proof relies on the 
Poisson integral of a function in order to bound the derivative of its analytic extension  to the disc.  This  is used in order 
to exploit a convergence estimate for the Poisson integral of a function which then gives rise to a convergence estimate for $\sum_{n=1}^N |S_nf-f|^r$.  

Theorem~\ref{strong sums} has been extended in various directions. 
Gogoladze \cite{gog88} generalized it to  Orlicz classes: 
\begin{align}\label{strong converge}
\frac{1}{N} \sum_{n=1}^N \Phi\left(S_n f(\theta)-f(\theta)\right) \rightarrow 0
\end{align} 
almost everywhere in $\mathbb{T}$ where $\Phi$ is continuous, positive, convex, and $\log \Phi(u)= O(u/\ln \ln u)$.  Gogoladze's proof is based on Zygmund's technique. 
Rodin \cite{rod90} established the same result as Gogoladze, but found that a certain type of radial maximal function is bounded in $BMO$. This allows him 
to conclude the argument by means of the John-Nirenberg inequality.    Finally, we remark that  Karagulyan~\cite{kar06} 
showed that \eqref{strong converge} fails if 
\begin{align*}
\limsup_{t \rightarrow +\infty}\frac{\log \Phi(t)}{t} = \infty.
\end{align*}
 
 \medskip
The inherent complex variable nature of the aforementioned classical body of work precludes any extensions to higher dimensions.
Our first goal is therefore to  
develop a {\em real-variable argument} leading to Theorem \ref{strong sums} that holds up to a choice of subsequence. We succeed in doing this  for $r\leq 2$, but encounter encounter some difficulties
for powers $r>2$.  

To be more specific, we show that for any $f \in L^1(\mathbb{T})$ and any $\lambda>0$ there exists a set $E \subset \mathbb{T}$  with $|E| \leq \frac{1}{\lambda}$ such that 
\begin{align} \label{mainineq}
\sup_{N\ge 1} \frac{1}{N} \sum_{n=1}^N \int_{\mathbb{T} \setminus E} |S_nf(x)|^2 \,dx \leq C \lambda \|f\|_1^2 
\end{align}
where $C$ is an absolute constant.  Three aspects are crucial about this statement. 
First, we are bounding an average of second moments by the $L^1$ norm of $f$.  
Second, without the removal of $E$ from $\mathbb{T}$, the best we can do is invoke Bernstein's inequality which implies
\begin{align*}
\frac{1}{N} \sum_{n=1}^N \|S_nf(x)\|_2^2  \leq C N \|f\|_1^2.
\end{align*}
And third, we emphasize that $E$ does not depend on $N$. 

Our approach relies on no more than the 
Calder\'{o}n-Zygmund decomposition and a covering lemma for dyadic intervals.  
Thus, in contrast to the aforementioned contributions, harmonic extensions to the disk are avoided completely. 
As expected, the real-variable nature of our approach renders it more flexible. 
It extends to higher dimensions as we now describe. 
Define the Fourier multiplier operators
\begin{align*}
S_{\overline{n}}f= \mathcal{F}^{-1}(\chi_{R_{\overline{n}}}\hat{f}\;)
\end{align*}
where $\chi_{R_{\overline{n}}}$ is the indicator function defined on $\mathbb{Z}^d$ for the rectangle $$R_{\overline{n}}=\{ \overline{m} \in \mathbb{Z}^d : -n_j \leq m_j \leq n_j,\ 1\leq j \leq d\}$$
 Then for all $f \in L^1(\mathbb{T}^d)$ and $\lambda>0$, there exists $E$ such that $\mathscr{H}^d(E) \leq \frac{1}{\lambda}$ 
 (where $\mathscr{H}^d$ is $d$-dimensional Hausdorff measure)
  and 
\begin{align*}
\sup_{N\ge1} \frac{1}{N^d} \sum_{\overline{n} \in R_N^+} \int_{\mathbb{T}^d \setminus E} |S_{\overline{n}}f(x)|^2 \,dx \leq C \lambda \|f\|_1^2
\end{align*}
where 
$$R_N^+=\{ \overline{n} \in (\mathbb{N}\setminus \{0\})^d : \|\overline{n}\|_{\infty} \leq N\}$$ 
and with $C$ an absolute constant.  Then this leads to the following analogue of Theorem~\ref{strong sums}:
\begin{theorem} \label{strong sums high d}
For any $f \in L^1(\mathbb{T}^d)$, there exists a sequence $\{N_k\} \subset \mathbb{N}$
\begin{align*}
\frac{1}{N_k^d} \sum_{\overline{n} \in R_{N_k}^+} \left|S_{\overline{n}} f(\theta)-f(\theta)\right|^r \xrightarrow{k \rightarrow \infty} 0
\end{align*}
 for $0<r\leq 2$.
\end{theorem}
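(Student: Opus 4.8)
The plan is to derive Theorem~\ref{strong sums high d} from the $d$-dimensional estimate displayed just before its statement, in the same spirit in which Theorem~\ref{zygmund} follows from Theorem~\ref{strong sums}, but arranged so that all limiting arguments are carried out in $L^1$ and in measure rather than pointwise. Throughout write
\begin{align*}
T_Nf(\theta):=\frac{1}{N^d}\sum_{\overline n\in R_N^+}\bigl|S_{\overline n}f(\theta)-f(\theta)\bigr|^2 .
\end{align*}
First I would reduce to $r=2$: for $0<r\le 2$, concavity of $t\mapsto t^{r/2}$ together with Jensen's inequality for the uniform probability measure on $R_N^+$ gives $N^{-d}\sum_{\overline n\in R_N^+}|S_{\overline n}f-f|^r\le (T_Nf)^{r/2}$ pointwise, so it suffices to produce a single sequence $\{N_k\}$ with $T_{N_k}f\to 0$ almost everywhere. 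Since $\mathbb T^d$ has finite measure, it is in turn enough to prove $T_Nf\to 0$ \emph{in measure}: one then extracts $N_k$ with $\mathscr H^d(\{T_{N_k}f>2^{-k}\})<2^{-k}$ and applies Borel--Cantelli. So the real task is: for all $\eta,\sigma>0$ there is $N_0$ with $\mathscr H^d(\{T_Nf>\eta\})<\sigma$ for every $N\ge N_0$.

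To prove the in-measure statement, the one genuine difficulty is that expanding $|S_{\overline n}f-f|^2$ brings in the term $|f|^2$, which for $f\in L^1$ need not be integrable, so one cannot simply bound $T_Nf$ by $N^{-d}\sum_{\overline n}|S_{\overline n}f|^2+|f|^2$ and quote the basic estimate. My remedy is a two-layer splitting. Fix $\eta,\sigma>0$; pick a trigonometric polynomial $g$ with $\|f-g\|_1<\delta$, where $\delta\in(0,1)$ is chosen at the very end, and set $h=f-g$; then apply the Calder\'on--Zygmund decomposition to $h$ at a fixed height to write $h=\phi+\psi$, where $\|\phi\|_\infty\lesssim_d 1$ and $\|\phi\|_1\le\|h\|_1<\delta$ (hence $\|\phi\|_2^2\le\|\phi\|_\infty\|\phi\|_1\lesssim_d\delta$), while $\psi$ is supported in a set $\Omega$ with $\mathscr H^d(\Omega)\le\|h\|_1<\delta$ and $\|\psi\|_1\le 2\delta$. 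The point of isolating $\psi$ is that $\psi\equiv 0$ off $\Omega$, which disposes of the bad $|\psi|^2$ term once $\Omega$ is excised.

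Writing $S_{\overline n}f-f=(S_{\overline n}g-g)+(S_{\overline n}\phi-\phi)+(S_{\overline n}\psi-\psi)$ and using $|a+b+c|^2\le 3(|a|^2+|b|^2+|c|^2)$, I would bound $T_Nf\le 3T_Ng+3T_N\phi+3T_N\psi$ and estimate the three pieces separately. For the polynomial piece, $S_{\overline n}g=g$ for all $\overline n\in R_N^+$ outside an $O(N^{-1})$-fraction, on which $|S_{\overline n}g-g|$ is bounded by $2\sum_{\overline m}|\widehat g(\overline m)|$, so $\|T_Ng\|_\infty=O(1/N)$ and $3T_Ng<\eta/3$ everywhere once $N\ge N_0(g,\eta)$. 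For the good piece, expanding the square and using that each $S_{\overline n}$ is an orthogonal projection on $L^2(\mathbb T^d)$ yields $\|T_N\phi\|_1\le 4\|\phi\|_2^2\lesssim_d\delta$ uniformly in $N$, so by Chebyshev $\mathscr H^d(\{3T_N\phi>\eta/3\})\lesssim_d\delta/\eta$. For the bad piece, $T_N\psi=N^{-d}\sum_{\overline n}|S_{\overline n}\psi|^2$ on $\mathbb T^d\setminus\Omega$, and the $d$-dimensional estimate preceding the theorem, applied to $\psi$ with parameter $\lambda=2/\sigma$, produces a set $E$ --- independent of $N$ --- with $\mathscr H^d(E)\le\sigma/2$ and $\sup_{N}\int_{\mathbb T^d\setminus(E\cup\Omega)}T_N\psi\le C\lambda\|\psi\|_1^2\lesssim C\delta^2/\sigma$, whence by Chebyshev $\mathscr H^d(\{x\notin E\cup\Omega:3T_N\psi(x)>\eta/3\})\lesssim C\delta^2/(\sigma\eta)$. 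Summing up, for $N\ge N_0$ the set $\{T_Nf>\eta\}$ is contained in $E\cup\Omega$ together with two sets of measure $\lesssim_d\delta/\eta$ and $\lesssim C\delta^2/(\sigma\eta)$; with $\eta$, $\sigma$ (hence $\lambda$) already fixed, I would now take $\delta$ so small that the total is $<\sigma$, which proves the in-measure claim and hence the theorem.

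The hard part is exactly the non-integrability of $|f|^2$ noted above, which is what forces the two-layer decomposition: the Calder\'on--Zygmund bad part $\psi$ is neutralized by deleting its (small) support, while the bounded part $\phi$ is handled by $L^2$ theory and the trigonometric polynomial $g$ is harmless once $N$ is large. A subtler point, easy to get wrong, is the order in which the parameters are frozen --- Calder\'on--Zygmund height, then $\lambda\sim 1/\sigma$ from the basic estimate, then $\delta$ --- together with the feature (stressed in the introduction) that the excised set $E$ delivered by the basic estimate does not depend on $N$, without which the sets removed at different scales could not be controlled uniformly.
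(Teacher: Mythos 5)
Your proposal is correct and follows essentially the same route as the paper's: approximate $f$ by a trigonometric polynomial $g$, apply the $N$-uniform restricted $L^2$ estimate (Proposition~\ref{high d}) to the error $f-g$ with an exceptional set $E$ independent of $N$, deduce that $\frac{1}{N^d}\sum_{\overline n\in R_N^+}|S_{\overline n}f-f|^2\to 0$ in measure, and pass to an a.e.\ convergent subsequence (the case $0<r<2$ following by Jensen). The only deviation is your extra Calder\'on--Zygmund splitting of $f-g$ into $\phi+\psi$: the paper avoids this second layer entirely by never isolating $|f-g|^2$, instead bounding $\left|\left\{|f-g|>\sqrt{\epsilon}\right\}\right|$ directly by Markov's inequality with $\|f-g\|_1$, so your decomposition (and the excised set $\Omega$) is unnecessary, though harmless and logically sound.
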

As in Zygmund's case, we may deduce a density statement. 
\begin{corollary} \label{converge high d}
For every $f \in L^1(\mathbb{T}^d)$ there exists a sequence $\{\overline{n}_{\ell}\} \subset \mathbb{N}^d$ (depending on $\theta$) such that $S_{\overline{n}_{\ell}}f(\theta) \rightarrow f(\theta)$ and
\begin{align*}
\limsup_{N \rightarrow \infty}\frac{\#(\{\overline{n}_{\ell}\} \cap [0,N]^d)}{N^d} \xrightarrow{k\rightarrow \infty} 1
\end{align*}
\end{corollary}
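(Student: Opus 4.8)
The plan is to run the classical deduction of a density statement from a strong--mean estimate --- the same passage by which Zygmund's Theorem~\ref{zygmund} follows from Theorem~\ref{strong sums}, and for which (as noted in the introduction) a \emph{single} exponent $r$ suffices --- while keeping track of the fact that Theorem~\ref{strong sums high d} only supplies Cesàro-type decay of the strong means along a sequence $\{N_k\}$, not for every $N$. Because of this, one should expect to recover full density~$1$ only along $\{N_k\}$, which is still enough to force the $\limsup$ to equal~$1$.

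First I would fix $f\in L^1(\mathbb{T}^d)$ and apply Theorem~\ref{strong sums high d} with $r=1$ to obtain a sequence $N_1<N_2<\cdots$, independent of $\theta$, with
\[
A_k(\theta):=\frac1{N_k^d}\sum_{\overline n\in R_{N_k}^+}\bigl|S_{\overline n}f(\theta)-f(\theta)\bigr|\longrightarrow 0\quad(k\to\infty)
\]
for almost every $\theta$. Before proceeding I would thin $\{N_k\}$ once and for all --- depending only on $f$ --- so that it grows super-geometrically, say $N_k^d\ge k\sum_{j<k}N_j^d$; this is possible because $N_k\to\infty$, and passing to a subsequence does not disturb the limit above. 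Now fix $\theta$ in the full-measure set where $A_k(\theta)\to 0$, and set $N_0:=0$. By Chebyshev's inequality, for $\varepsilon>0$ the bad set $B_k(\varepsilon):=\bigl\{\overline n\in R_{N_k}^+:\ |S_{\overline n}f(\theta)-f(\theta)|\ge\varepsilon\bigr\}$ has at most $\varepsilon^{-1}N_k^dA_k(\theta)$ elements. Put $\varepsilon_k:=A_k(\theta)^{1/2}+k^{-1}$, so that $\varepsilon_k\downarrow 0$ while $\varepsilon_k^{-1}A_k(\theta)\le A_k(\theta)^{1/2}\to 0$, and define
\[
G:=(\mathbb{N}\setminus\{0\})^d\ \setminus\ \bigcup_{k\ge1}\Bigl(B_k(\varepsilon_k)\cap\bigl(R_{N_k}^+\setminus R_{N_{k-1}}^+\bigr)\Bigr),
\]
with the convention $R_{N_0}^+:=\emptyset$. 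The claim is that, enumerating the (necessarily infinite) set $G=\{\overline n_\ell\}$ with $\|\overline n_\ell\|_\infty$ nondecreasing, this is the sequence required by the corollary.

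Two properties then remain to be checked. For the convergence $S_{\overline n_\ell}f(\theta)\to f(\theta)$: given $\varepsilon>0$, choose $K$ with $\varepsilon_k<\varepsilon$ for all $k\ge K$; any $\overline n\in G$ with $\|\overline n\|_\infty>N_{K-1}$ lies in a unique annulus $R_{N_k}^+\setminus R_{N_{k-1}}^+$ with $k\ge K$, and, not having been removed from that annulus, satisfies $|S_{\overline n}f(\theta)-f(\theta)|<\varepsilon_k<\varepsilon$ --- only finitely many $\overline n$ are exceptional. For the density, write $G^{\,c}:=(\mathbb{N}\setminus\{0\})^d\setminus G$; the annuli are disjoint, annulus $j$ meets $R_{N_k}^+$ only when $j\le k$, and $\#B_j(\varepsilon_j)\le\varepsilon_j^{-1}N_j^dA_j(\theta)$, so
\[
\#\bigl(G^{\,c}\cap R_{N_k}^+\bigr)\ \le\ \sum_{j=1}^{k}\varepsilon_j^{-1}N_j^dA_j(\theta)\ \le\ \sum_{j=1}^{k}A_j(\theta)^{1/2}N_j^d\ \le\ A_k(\theta)^{1/2}N_k^d+\Bigl(\sup_j A_j(\theta)^{1/2}\Bigr)\sum_{j<k}N_j^d.
\]
Both terms on the right are $o(N_k^d)$ --- the first since $A_k(\theta)\to 0$, the second since $\sup_j A_j(\theta)^{1/2}<\infty$ and $\sum_{j<k}N_j^d\le N_k^d/k$ by the super-geometric growth. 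Hence $\#(G\cap R_{N_k}^+)/N_k^d\to1$. Since every element of $G$ has positive coordinates, $\#(G\cap[0,N]^d)=\#(G\cap R_{\lfloor N\rfloor}^+)$ while $\#(G\cap[0,N]^d)\le(N+1)^d$; combining these gives $\limsup_{N\to\infty}N^{-d}\#\bigl(\{\overline n_\ell\}\cap[0,N]^d\bigr)=1$.

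The one genuinely delicate point is the last display. Because Theorem~\ref{strong sums high d} controls strong means only along the sparse set $\{N_k\}$, the errors collected over \emph{all} the earlier annuli $R_{N_j}^+\setminus R_{N_{j-1}}^+$, $j<k$, must be negligible against $N_k^d$; for a slowly growing $\{N_k\}$ the tail $\sum_{j<k}N_j^d$ can be of order $N_k^{d+1}$ and the estimate breaks. Thinning $\{N_k\}$ to a super-geometrically lacunary subsequence is exactly what rescues the bound, and it costs nothing, since it only weakens a Cesàro limit along a subsequence --- which is all that is available. Everything else (Chebyshev, the diagonal choice of $\varepsilon_k$, and the enumeration of $G$) is routine, just as in the one-dimensional case.
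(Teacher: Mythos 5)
Your proposal is correct: the Chebyshev count along the subsequence $\{N_k\}$ supplied by Theorem~\ref{strong sums high d}, the removal of bad lattice points, and the density count at the scales $N_k$ (plus the trivial upper bound $(N+1)^d/N^d\to1$) are exactly the ingredients of the paper's argument, which is Zygmund's classical deduction adapted to the subsequence. The bookkeeping, however, is organized differently. The paper works with the nested good sets $S_m=\{\overline n:\ |S_{\overline n}f(\theta)-f(\theta)|<1/m\}$ and chooses, for the fixed $\theta$, block indices $k_m$ so that the strong mean at scale $N_k$ is below $1/m^3$ for $k\ge k_m$; the sequence is $\bigcup_m S_m\cap(R^+_{N_{k_{m+1}}}\setminus R^+_{N_{k_m}})$, and the nestedness $S_1\supset S_2\supset\cdots$ guarantees that for $k_\ell<k\le k_{\ell+1}$ the whole of $S_\ell\cap R^+_{N_k}$ (up to the fixed finite core $R^+_{N_{k_1}}$) lies in the sequence, so a single application of Chebyshev at the scale $N_k$ gives density $\ge 1-1/\ell$ with no need to sum errors over earlier annuli. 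You instead discard, in each annulus, the bad set for a diagonal threshold $\varepsilon_k\sim A_k(\theta)^{1/2}$ and then must control the cumulative count $\sum_{j\le k}A_j^{1/2}N_j^d$ against $N_k^d$; your super-geometric thinning of $\{N_k\}$ (legitimate, and $\theta$-independent) is what rescues this, whereas the paper needs no thinning at all. Two small remarks: your assertion that $\varepsilon_k\downarrow0$ monotonically is unjustified, but also unused (only $\varepsilon_k\to0$ enters); and in fact if you made the thresholds nonincreasing you could bound $\#(G^c\cap R^+_{N_k})\le\varepsilon_k^{-1}N_k^dA_k(\theta)$ directly and dispense with the thinning, landing even closer to the paper's economy. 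Your use of $r=1$ versus the paper's $r=2$ is immaterial, as the paper itself notes that a single exponent suffices.
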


Our paper is organized as follows. In the following section we present some preliminary results concerning Calder\'on-Zygmund decompositions
and how they relate to the Ces\`aro-type averages we wish to investigate.  As expected, the slowly decaying tails of  the Dirichlet kernel 
appearing in~\eqref{mainineq} are the source of some technical difficulties.  We isolate the most serious one of these difficulties, and formulate a covering 
lemma which allows us to deal with it.  The covering lemma is proved both on~$\mathbb{T}$ as well as on~$\mathbb{T}^d$.

In Section~3 we present the core of our argument, i.e., the real-variable proof of~\eqref{mainineq}. In addition, we obtain the analogous estimate on the line~$\mathbb{R}$
as well as on higher-dimensional tori~$\mathbb{T}^d$. 

In the final section, we use standard arguments to establish the weak-type estimates on the associated Ces\`aro maximal functions which in turn lead
the desired a.e.~convergence results in any dimension. We also present the density result of Corollary~\ref{converge high d}. 

In higher dimensions it is of course natural to ask about analogues of our averaging theorems relative to other geometries. Most importantly, instead of partial
sums over rectangles we may wish to study partial sums over balls in Fourier space. 
 This will be treated elsewhere. Another unanswered question is whether the argument presented here can be used to obtain the complete statement Theorem \ref{zygmund} and Theorem \ref{strong sums}.  We believe that this is possible but an argument for that is not presented here.
\section{Calder\'{o}n-Zygmund decompositions and bounded Fourier support}
We first present some preliminary results related to our main theorems.  To set the stage, we begin with an immediate corollary of the Calder\'{o}n-Zygmund decomposition:
\begin{proposition} \label{1st reduction}
Let $\lambda>1$.  Then for any $f \in L^1(\mathbb{T})$, there exists $E \subset \mathbb{T}$, with $|E| \leq \frac{1}{\lambda}$ such that 
\begin{align*} 
 \int_{\mathbb{T} \setminus E} |f(\theta)|^2\,d\theta \leq \lambda \|f\|_1^2  
\end{align*}
\end{proposition}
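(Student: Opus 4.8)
The plan is to apply the Calder\'on--Zygmund decomposition to $|f|$ at an appropriately chosen height and then estimate the $L^2$ norm of $f$ away from the bad set separately on the good part and the bad part. Concretely, since $\lambda > 1$ we may run the Calder\'on--Zygmund stopping-time argument for $f$ on $\mathbb{T}$ at level $\alpha := \lambda \|f\|_1$. This produces a countable collection of pairwise disjoint dyadic intervals $\{I_j\}$ with $\sum_j |I_j| \le \frac{1}{\alpha}\|f\|_1 = \frac{1}{\lambda}$, on each of which $\frac{1}{|I_j|}\int_{I_j}|f| \le 2\alpha$ (using that the parent was not selected), and off $E := \bigcup_j I_j$ one has $|f(\theta)| \le \alpha$ for almost every $\theta$ by the Lebesgue differentiation theorem. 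Set $E$ to be this union; then $|E| = \sum_j |I_j| \le \frac{1}{\lambda}$, which is the required measure bound.

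The remaining estimate is then immediate: on $\mathbb{T}\setminus E$ we have $|f(\theta)| \le \alpha = \lambda\|f\|_1$ pointwise a.e., so
\begin{align*}
\int_{\mathbb{T}\setminus E} |f(\theta)|^2 \, d\theta \le \alpha \int_{\mathbb{T}\setminus E} |f(\theta)|\, d\theta \le \alpha \|f\|_1 = \lambda \|f\|_1^2.
\end{align*}
This uses only the trivial bound $|f|^2 \le \alpha |f|$ on the good set together with $\|f\|_{L^1(\mathbb{T}\setminus E)} \le \|f\|_{L^1(\mathbb{T})}$, and gives exactly the claimed inequality with constant one.

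The only genuine point requiring care — and the step I would flag as the main obstacle, modest as it is — is ensuring the Calder\'on--Zygmund decomposition is legitimately available at height $\alpha = \lambda\|f\|_1$ on the compact group $\mathbb{T}$. On $\mathbb{R}^d$ one needs $\alpha$ to exceed the average of $|f|$ so that the whole space is not selected at the top level; on $\mathbb{T}$ the analogous requirement is $\alpha > \int_{\mathbb{T}}|f| = \|f\|_1$, i.e. precisely the hypothesis $\lambda > 1$. With that in hand the dyadic stopping-time construction on $\mathbb{T}$ (bisecting $[0,1)$ repeatedly and selecting a dyadic interval as soon as its average first exceeds $\alpha$) runs verbatim and delivers the three properties used above. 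So the hypothesis $\lambda > 1$ is exactly what makes the argument go through, and no further input — in particular no Fourier-analytic information about $f$ — is needed for this preliminary reduction.
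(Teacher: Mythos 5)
Your proof is correct and follows essentially the same route as the paper: a Calder\'on--Zygmund decomposition at height comparable to $\lambda\|f\|_1$, the bound $|f|\le\lambda\|f\|_1$ a.e.\ off the union of bad intervals, and the trivial estimate $|f|^2\le \lambda\|f\|_1\,|f|$ there. The only cosmetic difference is that you build the scaling into the stopping height rather than normalizing $\|f\|_1=1$ and rescaling at the end, as the paper does.
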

\begin{proof}
Fix $f$ and suppose $\|f\|_1=1$.  We first perform a Calder\'{o}n-Zygmund decomposition at height $\lambda>1$.  Of course, the case $\lambda\leq1$ is trivial because $|\mathbb{T}|=1$.  From the C-Z decomposition we get a collection of disjoint, dyadic intervals, $\mathcal{B}$, such that for any $I \in \mathcal{B}$
\begin{align*}
\lambda< \frac{\int_I |f|}{|I|} \leq 2\lambda
\end{align*}
and for any $x \in \mathbb{T} \setminus \cup_{I \in \mathcal{B}} I$, $|f(x)| \leq \lambda$.  Then we decompose $f$ as $f=g+b$ where
\begin{align*}
b:= \sum_{I \in \mathcal{B}} \chi_I f
\end{align*}
and $g:=f-b$.  Then we can let 
\begin{align*}
E:=\bigcup_{I \in \mathcal{B}} I.
\end{align*}
From this we have $|E|\leq \frac{1}{\lambda}$ and
\begin{align*}
\int_{\mathbb{T} \setminus E} |f(\theta)|^2\,d\theta &= \int_{\mathbb{T} \setminus E} |g(\theta)|^2\,d\theta \\
&\leq \int_{\mathbb{T} \setminus E} \lambda|g(\theta)|\,d\theta \leq \lambda\int_{\mathbb{T}} |g(\theta)|\,d\theta \\
&\leq \lambda\|g\|_1 \leq \lambda
\end{align*}
Then for any $f \in L^1$, let $h:= f/\|f\|_1$ and we have
\begin{align*}
 &\int_{\mathbb{T} \setminus E} |h(\theta)|^2\,d\theta \leq \lambda \Rightarrow \int_{\mathbb{T} \setminus E} \left|\frac{f(\theta)}{\|f\|_1}\right|^2\,d\theta \leq \lambda \\
&\Rightarrow \int_{\mathbb{T} \setminus E} |f(\theta)|^2\,d\theta \leq \lambda \|f\|_1^2.
\end{align*}
This completes the proof.
\end{proof}
Next, we show that the previous estimate remains essentially unchanged if we introduce an "uncertainty" of scale $\frac{1}{N}$ into the function $f$.  This is the first major step toward our main result.  We note that in this proposition, $f$ depends on $N$ and thus the exceptional set $E$ implicitly depends on $N$.  Due to this circumstance, our main theorem on Fourier series is not an immediate corollary of the following proposition. Rather, we shall need to rely on the covering lemma which is presented later in this section, see Lemma \ref{lemma}.
\begin{proposition} \label{2nd reduction}
Let $N \in \mathbb{N}$, $\lambda>0$.  Then for any $f \in L^1(\mathbb{T})$ such that $\mbox{supp}(\hat{f}) \subset [-N,N]$, there exists $E \subset \mathbb{T}$, with $|E| \lesssim \frac{1}{\lambda}$ such that 
\begin{align*}
\int_{\mathbb{T} \setminus E} (B_{N}*|f|^2)(\theta)\,d\theta \leq C \lambda \|f\|_1^2  
\end{align*}
where $B_N(x)= \frac{1}{N}\chi_{[-1/2N, 1/2N]}(x)$.
\end{proposition}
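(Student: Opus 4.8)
The plan is to reduce Proposition \ref{2nd reduction} to Proposition \ref{1st reduction} by exhibiting $B_N * |f|^2$ as a quantity comparable to $|F|^2$ for a suitable auxiliary function $F \in L^1(\mathbb{T})$ with $\|F\|_1 \lesssim \|f\|_1$, and then applying the $L^2$-off-a-small-set estimate to $F$. The natural candidate exploits the fact that $f$ has Fourier support in $[-N,N]$, so $f$ varies slowly at scale $1/N$: on an interval of length $1/N$ centered at $\theta$, $|f|$ should be roughly constant. Concretely, I would set
\[
F(\theta) := \bigl( B_N * |f|^2 \bigr)(\theta)^{1/2} = \Bigl( N \int_{\theta - 1/2N}^{\theta + 1/2N} |f(t)|^2 \, dt \Bigr)^{1/2},
\]
so that the integrand in the target inequality is literally $|F(\theta)|^2$. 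It then suffices to prove $\|F\|_1 \lesssim \|f\|_1$, because Proposition \ref{1st reduction} applied to $F$ (at height $\lambda$, after the trivial rescaling $\lambda \mapsto C\lambda$) produces a set $E$ with $|E| \lesssim 1/\lambda$ and $\int_{\mathbb{T}\setminus E} |F|^2 \leq C\lambda \|F\|_1^2 \leq C'\lambda \|f\|_1^2$, which is exactly the claim.

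So the crux is the bound $\|F\|_1 = \int_{\mathbb{T}} \bigl( B_N * |f|^2 \bigr)^{1/2} \lesssim \|f\|_1$. Here is where the Fourier support hypothesis enters. By Cauchy–Schwarz in the averaging,
\[
\bigl( B_N * |f|^2 \bigr)(\theta)^{1/2} \geq \bigl( B_N * |f| \bigr)(\theta),
\]
which is the wrong direction; instead I want a reverse inequality, and that is precisely where $\mathrm{supp}(\hat f) \subset [-N,N]$ is used. The standard tool is Bernstein's inequality / Nikolskii's inequality for band-limited functions: for $f$ with spectrum in $[-N,N]$ one has a local reverse Hölder estimate, namely
\[
\Bigl( N\int_{I} |f|^2 \Bigr)^{1/2} \lesssim N \int_{\tilde I} |f| + \text{(tail)},
\]
where $I$ has length $1/N$ and $\tilde I$ is a slightly enlarged interval; equivalently, $\|f\|_{L^2(I)} \lesssim N^{1/2} \|f\|_{L^1(\tilde I)}$ up to rapidly decaying tails coming from the fact that the relevant reproducing kernel (a Fejér-type kernel at scale $N$) is not compactly supported. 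Integrating this pointwise-in-$\theta$ estimate over $\mathbb{T}$ and using that the enlargements $\tilde I$ have bounded overlap (Fubini) yields $\|F\|_1 \lesssim \|f\|_1$, with the tail contributions summing to a convergent geometric-type series because the Fejér kernel decays like $\min(N, 1/(N\theta^2))$.

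The main obstacle, then, is handling these slowly decaying tails rigorously: the reproducing formula $f = f * K_N$ for a band-limited $f$ requires $K_N$ to be a Schwartz-like kernel concentrated at scale $1/N$ with fast decay, and one must control $\int_I |f|^2$ by $\int |f(t)| K_N(\theta - t)\,dt$-type expressions, square, and integrate in $\theta$ without losing the $L^1 \to L^1$ bound. I expect to proceed by writing $f = f * \Phi_N$ where $\widehat{\Phi_N}$ equals $1$ on $[-N,N]$ and is a smooth bump supported on, say, $[-2N,2N]$, so that $\Phi_N(x) = N\Phi(Nx)$ for a fixed Schwartz function $\Phi$ with $\int \Phi = 1$; then bound $|f(\theta)|^2 \leq (|f| * |\Phi_N|)(\theta)^2$ is again the wrong way, so instead I estimate $N\int_I |f|^2$ by splitting $f = \sum_j f \chi_{I_j}$ over a partition into intervals of length $1/N$, using $\|f * \Phi_N\|_{L^\infty(I)}^2 \lesssim N^2 (\sum_j c_j \|f\|_{L^1(I_j)})^2$ with $c_j$ reflecting the decay of $\Phi$ across $|j|$ intervals, and then Minkowski's inequality in $\ell^1$ to pull the sum out before integrating in $\theta$. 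This reduces everything to the absolute convergence of $\sum_j c_j$, which holds by the Schwartz decay of $\Phi$. Once $\|F\|_1 \lesssim \|f\|_1$ is in hand the rest is immediate from Proposition \ref{1st reduction}.
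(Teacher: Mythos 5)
Your reduction is correct, and it takes a genuinely different route from the paper's proof. The paper proves the proposition directly by a Calder\'on--Zygmund decomposition of $f$ at height $\lambda$: it sets $E=\bigcup_{I\in\mathcal{B}}3\cdot I$, writes $f=f^{(N)}=g^{(N)}+b_1^{(N)}+b_2^{(N)}$ (bad intervals split according to length relative to $1/N$), observes that the near part of $b_1^{(N)}$ contributes nothing because its support, after convolution with $B_N$, stays inside $E$, and shows that the far part $\tilde{b}$ and the small-interval part $b_2^{(N)}$ are $\lesssim\lambda$ in $L^\infty$ using the decay $|V_N(y)|\lesssim\frac1N\min(N^2,|y|^{-2})$ of the de la Vall\'ee Poussin kernel. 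You instead note that the integrand is $|F|^2$ with $F=(B_N*|f|^2)^{1/2}$ and reduce to Proposition \ref{1st reduction} once $\|F\|_1\lesssim\|f\|_1$; that key inequality is a Plancherel--P\'olya/Nikolskii-type bound and does hold: since the averaging interval $I(\theta)$ has length $1/N$, one has $F(\theta)\le\sup_{|t-\theta|\le 1/(2N)}|f(t)|$, and writing $f=V_N*f$ with $|V_N(y)|\lesssim N(1+N|y|)^{-2}$ gives $\sup_{|t-\theta|\le 1/(2N)}|f(t)|\lesssim(|f|*\tilde{\Phi}_N)(\theta)$ for $\tilde{\Phi}_N(y)=N(1+N|y|)^{-2}$, whose $L^1(\mathbb{T})$ norm is bounded uniformly in $N$, so Young's inequality finishes the job; this is a cleaner packaging of your partition-plus-Minkowski computation (your coefficients $c_j\sim\min(1,j^{-2})$ are summable, though polynomially rather than geometrically, and on $\mathbb{T}$ your $\Phi_N$ should be the periodization of $N\Phi(N\cdot)$ --- both points are harmless). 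What each approach buys: yours is shorter and exploits the compact support of $B_N$ at scale $1/N$, which is exactly what makes the pointwise domination of $B_N*|f|^2$ by a local sup of $|f|$ available; the paper's longer route builds $E$ explicitly out of the Calder\'on--Zygmund intervals of $f$, and it is precisely this structure (combined with Lemma \ref{lemma}) that gets recycled in Theorem \ref{first thing}, where the kernel $K_N$ has tails and the exceptional set must be chosen independently of $N$ --- something your $N$-dependent auxiliary function $F$ would not deliver. Since the paper itself stresses that in Proposition \ref{2nd reduction} the set $E$ is allowed to depend on $N$, your argument is a valid and more economical proof of the statement as posed.
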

\begin{proof}
We assume again that $\|f\|_1=1$.  Fix $\lambda>1$ and $N \in \mathbb{N}$.  We still perform a Calder\'{o}n-Zygmund at height $\lambda$ for $f$, and denote the collection of "bad" intervals given by the decomposition by $\mathcal{B}$. But we note that we cannot expect $B_N*|f|^2 \leq \lambda^2$ on $\mathbb{T} \setminus \cup_{I \in \mathcal{B}}I$ as in the previous problem due to the "smearing" of the function $f$ from the convolution with $B_N$.  We begin to handle this problem by simply defining 
\begin{align*}
E := \bigcup_{I \in \mathcal{B}} 3 \cdot I
\end{align*}
where $3 \cdot I$ is the interval with the same center as the interval $I$ but which is of length $3|I|$. We will split $f$ into three parts and show that the inequality holds for each part. From the Calder\'{o}n-Zygmund decomposition, we have $f=g+b$.  We split the collection $\mathcal{B}$ into $\mathcal{B}_1$ and $\mathcal{B}_2$, where $\mathcal{B}_1$ is the collection of intervals in $\mathcal{B}$ with length greater than $1/N$ and $\mathcal{B}_2$ is the collection of intervals in $\mathcal{B}$ with length less than or equal to $1/N$. Then, let $b=b_1+b_2$ and $f_I := \chi_I f$ where
\begin{align*}
b_1= \sum_{I \in \mathcal{B}_1} \chi_I f= \sum_{I \in \mathcal{B}_1} f_I \\
\mbox{and } b_2= \sum_{I \in \mathcal{B}_2} \chi_I f= \sum_{I \in \mathcal{B}_2} f_I.
\end{align*}
Thus $f=g+b_1+b_2$.  Let $V_N$ be the de la Vall\'{e}e Poussin kernel.  Since $\mbox{supp}(\hat{f}) \subset [-N,N]$, $f=V_N*f=:f^{(N)}$ and thus $f^{(N)}=g^{(N)}+b^{(N)}_1+b^{(N)}_2$. The sharp cutoff's of the indicator functions introduce higher order Fourier coefficients, so $\mbox{supp}(\hat{g})$, $\mbox{supp}(\widehat{b_1})$, and  $\mbox{supp}(\widehat{b_2})$ are no longer necessarily contained in $[-N, N]$.  This prevents us from having $g^{(N)}=g$, $b^{(N)}_1=b_1$, and $b^{(N)}_2=b_2$.  Although $g^{(N)}$, $b^{(N)}_1$, and $b^{(N)}_2$ are not supported on disjoint sets, we will allow ourselves to gain a constant factor in order to bound each term individually:
\begin{align*}
&\int_{\mathbb{T} \setminus E} B_{N}*|f|^2 \lesssim \int_{\mathbb{T} \setminus E} B_{N}*|g^{(N)}|^2 +\int_{\mathbb{T} \setminus E} B_{N}*|b_1^{(N)}|^2 +\int_{\mathbb{T} \setminus E} B_{N}*|b^{(N)}_2|^2
\end{align*}
We first consider $g^{(N)}$, for which we have 
\begin{align*}
\|g^{(N)}\|_{\infty}= \|g * V_N\|_{\infty} \leq \|g\|_{\infty}\|V_N\|_1 \leq \lambda \|V_N\|_1 \lesssim\lambda
\end{align*}
whence
\begin{align*}
\int_{\mathbb{T} \setminus E} (B_{N}*|g^{(N)}|^2)(\theta)\,d\theta &\lesssim \int_{\mathbb{T} \setminus E} \lambda (B_{N}*|g^{(N)}|)(\theta)\,d\theta \\
&\lesssim \int_{\mathbb{T}}\lambda (B_{N}*|g^{(N)}|)(\theta)\,d\theta =\lambda \|B_N * |g^{(N)}|\|_1 \\
& \lesssim \lambda \|B_N\|_1 \|g^{(N)}\|_1 \lesssim \lambda \|g\|_1\|V_N\|_1\\
& \lesssim \lambda
\end{align*}
which is all we need to show for $g$.  For $b^{(N)}_1$, let $I^*:=2\cdot I$ and $V_N*b_1=b^*+\tilde{b}$, where 
\begin{align*}
b^*=\sum_{I \in \mathcal{B}_1}\chi_{I^*}(V_N*f_I) \mbox{ and }\tilde{b}=\sum_{I \in \mathcal{B}_1}\chi_{\mathbb{T}\setminus  I^*}(V_N*f_I).
\end{align*}
Then 
\begin{align*}
\int_{\mathbb{T} \setminus E} B_{N}*|b_1^{(N)}|^2 \lesssim \int_{\mathbb{T} \setminus E} B_{N}*|b^*|^2+\int_{\mathbb{T} \setminus E} B_{N}*|\tilde{b}|^2
\end{align*}
We can handle $b^*$ easily.  By definition of $b^*$,  $\mbox{supp}(|b^*|^2)= \cup_{I \in \mathcal{B}_1} 2\cdot I$ and thus
\begin{align}\label{setineq}
\mbox{supp}(B_{N}*|b^*|^2)= \bigcup_{I \in \mathcal{B}_1} 2\cdot I+\left[-\frac{1}{2N}, \frac{1}{2N}\right] \subset \bigcup_{I \in \mathcal{B}_1} 3\cdot I=E 
\end{align}
The set inequality from the previous line is due to the fact that $|I|> \frac{1}{N}$ for all $I \in \mathcal{B}_1$.  This implies 
\begin{align*}
\int_{\mathbb{T} \setminus E} B_{N}*|b^*|^2=0
\end{align*}
In order to handle $\tilde{b}$ we start with a claim:

\medskip
{\bf Claim: }$\|\tilde{b}\|_{\infty}=\|\sum_{I}\chi_{\mathbb{T}\setminus I^*}f^{(N)}_I\|_{\infty} \lesssim \lambda$.
\medskip

\noindent We consider the two possible cases for any $x \in \mathbb{T}$: either (1) $x \in \mathbb{T} \setminus \cup I^*$ or (2) $x \in I^*$ for at least one $I \in \mathcal{B}_1.$ In the first case, for any $x \in \mathbb{T} \setminus \cup I^*$, $\chi_{\mathbb{T}\setminus I^*}(x)=1$ so
\begin{align*}
\left|\tilde{b}(x)\right|&\lesssim \sum_{I \in \mathcal{B}_1} \chi_{\mathbb{T}\setminus I^*}(x)|f^{(N)}_I(x)|=\sum_{I \in \mathcal{B}_1} \chi_{\mathbb{T}\setminus I^*}(x)|(V_N*f_I)(x)|=\sum_{I \in \mathcal{B}_1}|(V_N*f_I)(x)| \\
&\lesssim \sum_{I \in \mathcal{B}_1}(|V_N|*|f_I|)(x)
\end{align*}
In the second case, $x \in H_k^*$ for some subcollection of $\mathcal{B}_1$, $\{H_k\}$. Then  $\chi_{\mathbb{T}\setminus I^*}(x)=1$ for $I^* \not\in \{H_k\}$ and $\chi_{\mathbb{T}\setminus H_k^*}(x)=0$, so 
\begin{align*}
\tilde{b}(x)&=\sum_{I \in \mathcal{B}_1} \chi_{\mathbb{T}\setminus I^*}(x)|f^{(N)}_I(x)|=\sum_{I \in \mathcal{B}_1 \atop I \not\in \{H_k\}} \chi_{\mathbb{T}\setminus I^*}(x)|f^{(N)}_I(x)| \\
&\leq \sum_{I \in \mathcal{B}_1 \atop I \not\in \{H_k\}} (|V_N|* |f_I|)(x).
\end{align*}
In this case, for every $|V_N|*|f_I|$ in the sum, $x \in \mathbb{T} \setminus I^*$. So in both cases, we are taking a sum of $(|V_N|*|f_I|)(x)$ where $x \not\in \cup I^*$ and the union is taken over the same intervals as the sum.  Therefore, it suffices to assume that we are in the first case and $x \in \mathbb{T} \setminus \cup I^*$, where the union is taken over all $I \in \mathcal{B}_1$.  So we fix some $x  \in \mathbb{T} \setminus \cup I^*$.  First in order to bound each $|V_N|* |f_I|$, we recall that $|V_N(y)| \lesssim \frac{1}{N} \mbox{min}(N^2, |y|^{-2})$, then
\begin{align*}
|\tilde{b}(x)| &\lesssim \sum_{I \in \mathcal{B}_1}(|V_N|*|f_I|)(x) = \sum_{I \in \mathcal{B}_1} \int_{\mathbb{T}} |V_N(x-y)| |f_I(y)|\,dy \\
&\lesssim \int_{\mathbb{T}} \frac{1}{N}\mbox{min}(N^2,|x-y|^{-2})\sum_{I \in \mathcal{B}_1} |f_I(y)|\,dy
\end{align*}
The $|f_I(y)|$ gives us that the product is nonzero for $y \in I$, and by assumption $x \in \mathbb{T} \setminus I^*$.  Therefore, $|x-y|> \frac{1}{2}|I|>\frac{1}{2N}$ (for every $I \in \mathcal{B}_1$) which implies $|V_N(x-y)| \lesssim \frac{1}{N}|x-y|^{-2}$ and 
\begin{align*}
&\int_{\mathbb{T}} \frac{1}{N}\mbox{min}(N^2,|x-y|^{-2})\sum_{I \in \mathcal{B}_1} |f_I(y)|\,dy \\
&\lesssim \frac{1}{N}\sum_{I \in \mathcal{B}_1} \sup_{y \in I}|x_0-y|^{-2} \|f_I\|_1 & \mbox{By H\"{o}lder}
\end{align*}
for any fixed $x \in \mathbb{T}\setminus \cup I^*$.  For each $I$, $|x-y|$ with $y \in I$ is at least $\frac{1}{2}|I|$.  Of course, $|x-y|$ will be greater than the distance between $x$ and $I$ for any $y \in I$. Then $|x-y|\gtrsim \max(|I|, \mbox{dist}(x,I)) $ which implies $|x-y| \gtrsim |I|+\mbox{dist}(x,I) $.  Thus by ordering $\mathcal{B}_1=\{I_j\}$ by proximity to $x$ (considering only those intervals to the right of $x$ without loss of generality) we obtain
\begin{align*}
&\frac{1}{N}\sum_{I \in \mathcal{B}_1} \sup_{y \in I}|x-y|^{-2} \|f_I\|_1 \\
&\lesssim \frac{1}{N} \sum_{j =1}^{|\mathcal{B}_1|} (|I_j|+\mbox{dist}(x,I_j))^{-2}\|f_{I_j}\|_1 \\
&\lesssim \frac{\lambda}{N} \sum_{j=1}^{|\mathcal{B}_1|} \frac{|I_j|}{(|I_j|+\mbox{dist}(x,I_j))^{2}} & \mbox{By the C-Z decomposition.}
\end{align*}
Let $\phi_x(y):=\min(N^{2}, |x-y|^{-2})$.  Then, for all $y \in I_j$,
\begin{align*}
\frac{1}{(|I_j|+\mbox{dist}(x,I_j))^{2}} &\lesssim \frac{1}{|x-y|^{2}}   \lesssim \phi_x(y) \\
\Rightarrow  \frac{1}{(|I_j|+\mbox{dist}(x,I_j))^{2}} &\lesssim \inf_{y \in I_j}\phi_x(y)
\end{align*}
We also recall that the $I_j$ are pairwise disjoint. Therefore, the sum $\sum_{j=1}^{|\mathcal{B}_1|} \frac{|I_j|}{(|I_j|+\mbox{dist}(x,I_j))^{2}}$ is bounded by a lower Riemann sum of $\phi_x(y)$.  Whence,
\begin{align*}
\frac{1}{N} \sum_{j=1}^{|\mathcal{B}_1|} \lambda\frac{|I_j|}{(|I_j|+\mbox{dist}(x,I_j))^{2}} &\lesssim \frac{\lambda}{N} \|\phi_x\|_1 \\
&\lesssim  \frac{\lambda}{N} N =\lambda.
\end{align*}
In conclusion, the claim holds and $\|\tilde{b}\|_{\infty} \lesssim \lambda$, which implies
\begin{align*}
\int_{\mathbb{T}\setminus E} (B_N*|\tilde{b}|^2)(x) \,dx &\lesssim \|B_N\|_1 \int_{\mathbb{T}\setminus E} |\tilde{b}(x)|^2 \,dx  \\
&\lesssim \|B_N\|_1 \lambda \int_{\mathbb{T}\setminus E} |\tilde{b}(x)| \,dx \hspace{1cm} (\mbox{from } \|\tilde{b}\|_{\infty} \lesssim \lambda)\\
&\lesssim  \lambda \sum_{I \in \mathcal{B}_1}\int_{\mathbb{T}}\chi_{\mathbb{T}\setminus I^*}(x)|f_I^{(N)}(x)|\,dx \lesssim  \lambda\sum_{I \in \mathcal{B}_1}\int_{\mathbb{T}}|f_I^{(N)}(x)|\,dx\\ 
&\lesssim \lambda \sum_{I \in \mathcal{B}_1} \|f_I\|_1 \lesssim \lambda.
\end{align*}
Finally we have shown for $b^{(N)}_1$
\begin{align*}
\int_{\mathbb{T}\setminus E} B_N*|b^{(N)}_1|^2 \lesssim \int_{\mathbb{T}\setminus E} B_N*|b^*|^2+\int_{\mathbb{T}\setminus E} B_N*|\tilde{b}|^2\lesssim \lambda.
\end{align*}
  For $b_2$, we first assume $N=2^j$ for some $j \in \mathbb{N}$ and we let $J$ be the collection of dyadic intervals of length $N^{-1}=2^{-j}$.  We will basically show that there should not be any bad intervals of length less than or equal to $1/N$.  Heuristically speaking, we expect this since by the Uncertainty Principle, functions with Fourier support contained in $[-N, N]$ are essentially constant on the scale of $\frac{1}{N}$. To be more precise we shall now establish the bound $\|b_2^{(N)}\|_{\infty} \lesssim \lambda$.  Since $I \in \mathcal{B}_2$ are dyadic, each $I$ is contained in a length $\frac{1}{N}$ interval.  So we let
\begin{align*}
b_2^{(N)}&=\sum_{I \in \mathcal{B}_2}f^{(N)}_I= \sum_{J} \sum_{I \in \mathcal{B}_2 \atop I \subset J}f_I^{(N)} 
\end{align*}
where the $J$ intervals come from the $1/N$ partition. Henceforth, we will let 
\begin{align*}
f_J:= \chi_J b_2= \sum_{I \in \mathcal{B}_2 \atop I \subset J}f_I.
\end{align*}
 We first note again that $|V_N(\theta)| \lesssim \frac{1}{N} \min(|\theta|^{-2}, N^2)$ and thus 
\begin{align*}
|f^{(N)}_J(y)| &\lesssim \int_{\mathbb{T}}\frac{1}{N}\min(|y-x|^{-2},N^2)|f_J(x)| \,dx \\
& \lesssim \frac{1}{N}\|f_J\|_1  \min(\sup_{x \in J}|y-x|^{-2},N^2)\\
& \lesssim \frac{1}{N}\|f_J\|_1 \frac{1}{\mbox{dist}(y,J)^2+\frac{1}{N^2}}
\end{align*}
Then
\begin{align*}
\left|b^{(N)}_2(y)\right|&=\left| \sum_J f^{(N)}_J(y) \right| \lesssim  \sum_J \left|f^{(N)}_J(y)\right|\\
& \lesssim \sum_J \|f_J\|_1 \frac{1/N}{\mbox{dist}(y, J)^2+\frac{1}{N^2}} \\
&\lesssim (\max_J \|f_J\|_1) \sum_J  \frac{1/N}{\mbox{dist}(y, J)^2+\frac{1}{N^2}}
\end{align*}
We also note that by the Calder\'{o}n-Zygmund decomposition for any $J$,
\begin{align*}
\|f_J\|_1&=\frac{1}{N}\frac{\|f_J\|_1}{|J|} \lesssim \frac{1}{N}\frac{\sum_{I \in \mathcal{B}_2 \atop I \subset J} \|f_I\|_1}{|J|} \\
& \lesssim  \frac{1}{N}\frac{\lambda \sum_{I \in \mathcal{B}_2 \atop I \subset J} |I|}{|J|} \lesssim \frac{1}{N}\frac{\lambda |J|}{|J|}\\
&\lesssim \lambda|J| =\frac{\lambda}{N}
\end{align*}
Therefore,
\begin{align*}
\left|b^{(N)}_2(y)\right|&\lesssim (\max_J \|f_J\|_1)\sum_J \frac{1/N}{\mbox{dist}(y, J)^2+\frac{1}{N^2}} \\
&\lesssim \lambda\sum_J  \frac{1/N^2}{\mbox{dist}(y, J)^2+\frac{1}{N^2}}
\end{align*}
If we fix $y$, then for each $J$, there is a nonnegative constant $C_y \leq \frac{1}{N}$ that does not depend on $J$ and a positive integer $m_J \in [1,N]$ unique to each $J$ such that $\mbox{dist}(y,J) = C_y+m_J\frac{1}{N} $
\begin{align*} 
\sum_J  \frac{1/N^2}{\mbox{dist}(y, J)^2+\frac{1}{N^2}} &\lesssim \sum_J  \frac{1/N^2}{(C_y+m_J\frac{1}{N})^2+\frac{1}{N^2}}\lesssim \sum_J  \frac{1/N^2}{m^2_J\frac{1}{N^2}+\frac{1}{N^2}} \\
&\lesssim \sum_{i=1}^N \frac{1}{i^2} \lesssim 1 
\end{align*}
This gives us the following inequality for all $y \in \mathbb{T}$:
\begin{align*}
\left|b^{(N)}_2(y)\right|=\left| \sum_J f^{(N)}_J(y) \right| \lesssim \lambda.
\end{align*}
Thus 
\begin{align*}
\int_{\mathbb{T} \setminus E} B_N * \left| b^{(N)}_2 \right|^2 &\lesssim \lambda \int_{\mathbb{T}} B_N * \left|b^{(N)}_2\right|= \lambda \|B_N * |b_2^{(N)}|\|_1 \\
&\lesssim \lambda  \|B_N\|_1  \|b_2^{(N)}\|_1 \hspace{.3cm} \mbox{By Young's Inequality} \\
&\lesssim \lambda \|b_2\|_1\lesssim \lambda\|f\|_1\lesssim \lambda \end{align*}
Which is exactly what we need for $b_2^{(N)}$.

Combining the three bounds, we have
\begin{align*}
\int_{\mathbb{T} \setminus E} (B_{N}*|f|^2)(\theta)\,d\theta &=\int_{\mathbb{T} \setminus E} (B_{N}*|f^{(N)}|^2)(\theta)\,d\theta \\
&\lesssim \int_{\mathbb{T} \setminus E} (B_{N}*|g^{(N)}|^2)(\theta)\,d\theta \\
&\hspace{.5cm}+\int_{\mathbb{T} \setminus E} (B_{N}*|b^{(N)}_1|^2)(\theta)\,d\theta +\int_{\mathbb{T} \setminus E} (B_{N}*|b^{(N)}_2|^2)(\theta)\,d\theta \\
&\lesssim \lambda +\lambda +\lambda \lesssim \lambda.
\end{align*}
By scaling we obtain the final result without assuming $\|f\|_1=1$ and by taking the Calder\'{o}n-Zygmund decomposition at height $\frac{1}{3}\lambda$ we can assume that $|E|\leq \frac{1}{\lambda}$.
\end{proof}
It is now natural, as well as essential for our main application to Fourier series later in the paper, to generalize Proposition \ref{2nd reduction} to kernels other than $B_N$.  Technically speaking, we lose the simple relation \eqref{setineq}, i.e.,
\begin{align*}
\mbox{supp}(B_{N}*|b^*|^2)= \bigcup_{I \in \mathcal{B}_1} 2\cdot I+\left[-\frac{1}{2N}, \frac{1}{2N}\right] \subset \bigcup_{I \in \mathcal{B}_1} 3\cdot I.
\end{align*}
Suppose that instead of $B_N$ we had the kernel $Q_N$ where
\begin{align*}
Q_N(y) = \frac{1}{N^{s-1}} \min(N^s, |y|^{-s}), \hspace{.5cm} s>0
\end{align*}
Of course, we need to assume $s>1$. Indeed, for $0<s\leq1$, the $L^1$ norm of $Q_N$ is not bounded as $N\rightarrow \infty$; in fact for $f\equiv 1$ we have
\begin{align*}
\int_{\mathbb{T} \setminus E} (Q_N*|f|^2)(x)\,dx\gtrsim (1-\frac{1}{\lambda})\|Q_N\|_1.
\end{align*}
Thus, $(1-\frac{1}{\lambda})\|Q_N\|_1$ will be unbounded as $N \rightarrow \infty$. Thus, we assume $s>1$.  Of particular importance later is the case $s=2$. Focusing on the aforementioned relation \eqref{setineq} we face the issue of bounding
\begin{align*}
\int_{\mathbb{T} \setminus E} (Q_N*|b^*|^2)(x)\,dx.
\end{align*}
An essential problem that we face in this situation is possible overlap that accumulates in the sum $\sum_{I \in \mathcal{B}_1} \chi_{I^*}f^{(N)}_I$.  In addition to this obstruction, we again emphasize that the hypothesis $\mbox{supp}(\hat{f}) \subset [-N,N]$ from Proposition \ref{2nd reduction} gives $E$ an implicit dependence on $N$.  The following covering lemma is designed to handle both of these obstructions.
\begin{lemma} \label{lemma}
Let $\mathcal{G}$ be a finite collection of pairwise disjoint, nonadjacent dyadic intervals.  Let $\mathcal{G}^*$ be the collection of dilated intervals that are each of the form $\frac{9}{8}\cdot I=:I^*$ for all $I \in \mathcal{G}$.  If $\cup_{J \in \mathcal{G}^*}J$ is an interval, then 
\begin{align}   \bigcup_{J \in \mathcal{G}^*} J \subset 4\cdot J_0 \end{align}
where $J_0$ is the largest interval in $\mathcal{G}$.
\end{lemma}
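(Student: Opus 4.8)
The plan is to prove the lemma by a combinatorial analysis of how $9/8$-dilates of dyadic intervals can chain up into a single interval, with essentially all the work concentrated in a structural fact about such chains. Write $\mathcal{G}=\{I_1,\dots,I_M\}$, $I_j=[a_j,b_j)$, listed from left to right. Since $\bigcup\mathcal{G}^*$ is an interval, one checks — using that dyadic intervals are grid-aligned, which prevents a long interval's dilate from reaching over a neighbour to bridge a gap — that consecutive dilates $\tfrac98 I_j$ and $\tfrac98 I_{j+1}$ overlap. Two elementary inputs will be used: (a) for disjoint nonadjacent dyadic $I,I'$ one has $\operatorname{dist}(I,I')\ge\min(|I|,|I'|)$, since the two nearest endpoints both lie on the grid of mesh $\min(|I|,|I'|)$ and are distinct; and (b) overlap of $\tfrac98 I$ and $\tfrac98 I'$ forces $\operatorname{dist}(I,I')<\tfrac1{16}(|I|+|I'|)$. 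Together they give $\max(|I|,|I'|)\ge 16\min(|I|,|I'|)$ for any overlapping such pair; so along the chain consecutive lengths differ by a factor at least $16$, and (after translating the longest interval by a multiple of its length to be $[0,\Lambda)$) the chain-neighbour of the longest interval is pinned to $a\le\Lambda+\Lambda/16$.

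The core is the following, proved by strong induction on the number of intervals, carried along jointly with a position bound: \emph{(B) in any chain of disjoint nonadjacent dyadic intervals whose $9/8$-dilates form an interval, the maximal length $\Lambda$ is attained by exactly one interval and all others have length $\le\Lambda/16$; (P) the union of the $9/8$-dilates is contained in $\tfrac74$ times the longest interval.} For the inductive step, translate so the longest interval is $J_0=I_{j_0}=[0,\Lambda)$; its chain-neighbours have length $\le\Lambda/16$. Suppose, for contradiction, that some $I_p$ other than $J_0$ had length $>\Lambda/16$; take $p$ closest to $j_0$, say to its right. Then $p\ge j_0+2$, the sub-chain from $I_{j_0+1}$ to $I_p$ is strictly shorter and has $I_p$ as its strict maximum, so by the inductive (P) applied to that sub-chain, $\tfrac98 I_{j_0+1}\subset\tfrac74 I_p$; comparing left endpoints gives $a_p\le a_{j_0+1}+\tfrac38|I_p|$. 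With $a_{j_0+1}\le\Lambda+\Lambda/16$ and $a_p\ge\Lambda+|I_p|$ (both $a_p$ and $\Lambda$ being multiples of $|I_p|$, and $a_p>\Lambda$) this yields $\tfrac58|I_p|\le\Lambda/16$, i.e.\ $|I_p|\le\Lambda/10$, hence $|I_p|\le\Lambda/16$ — a contradiction; uniqueness of the maximum follows the same way. That is (B).

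The proof of (P) (and hence of the Lemma, since $\tfrac74 J_0\subset 4J_0$) now runs through a total-length bound: by (B) and induction, deleting the longest interval of a chain $\mathcal{H}$ splits it into two shorter chains of maximal length $\le\Lambda/16$, each therefore of total length $\le\tfrac87\cdot\tfrac{\Lambda}{16}=\tfrac{\Lambda}{14}$, so $\sum_{I\in\mathcal{H}}|I|\le\Lambda+2\cdot\tfrac{\Lambda}{14}=\tfrac87\Lambda$. Feeding this into the telescoping identity for the left endpoint of $\bigcup\mathcal{G}^*$ — namely $-a_1=\sum_{j<j_0}(|I_j|+\operatorname{gap}_j)$ with each gap $<\tfrac1{16}$ of the sum of its two neighbouring lengths — one gets $a_1>-\tfrac{L}{4}$ (where $L=|J_0|$ and $\mathcal{G}$ has been translated so $J_0=[0,L)$), and symmetrically $b_M<L+\tfrac{L}{4}$, so after dilation $\bigcup\mathcal{G}^*\subset(-\tfrac38 L,\tfrac{11}{8}L)=\tfrac74 J_0$. (One may also close the argument directly: if some $\tfrac98 I_k$ reached left of $-\tfrac32 L$, then $I_k$, being dyadic of length $\le L$, would lie in a length-$L$ block with right endpoint $\le -L$; but $\bigcup\mathcal{G}^*$ contains the span $[a_1,b_M)\supset[-L,0)$, and $[-L,-\tfrac{L}{16})$ can only be covered by dilates of intervals strictly left of $J_0$, forcing their total length $\ge\tfrac89\cdot\tfrac{15}{16}L=\tfrac56 L$, far exceeding the bound $\tfrac{L}{14}$.)

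The genuine obstacle is (B): pushing every non-maximal interval of the chain down to length $\le\Lambda/16$. This is exactly where the rigidity of the dyadic grid must be leveraged against the small $9/8$-dilation, and it forces one to track positions — not merely lengths — along the chain, which is why the position bound (P) has to be carried through the same induction. Everything afterwards is estimation with a comfortable margin, so the precise constants play no role.
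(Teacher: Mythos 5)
There is a genuine gap, and it sits exactly where you located the weight of the argument. Your first structural claim --- that since $\bigcup\mathcal{G}^*$ is an interval, \emph{consecutive} dilates $\tfrac98 I_j$ and $\tfrac98 I_{j+1}$ must overlap, so that consecutive lengths jump by a factor $\ge 16$ --- is false: the grid does \emph{not} prevent a long interval's dilate from reaching over a small neighbour. Take $\mathcal{G}=\{[0,64),\,[65,66),\,[68,69)\}$: these are dyadic, pairwise disjoint and nonadjacent, and the dilates are $[-4,68)$, $[64\tfrac{15}{16},66\tfrac1{16})$, $[67\tfrac{15}{16},69\tfrac1{16})$, whose union is the interval $[-4,69\tfrac1{16})$. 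Yet the dilates of the two consecutive unit intervals $[65,66)$ and $[68,69)$ are disjoint (the bridge is supplied by the dilate of $[0,64)$ leapfrogging $[65,66)$), and their lengths are equal, not separated by a factor $16$. The same example destroys the inductive mechanism of (B) and of the total-length bound: you apply the inductive hypothesis (P) to ``the sub-chain from $I_{j_0+1}$ to $I_p$'' and you assert that ``deleting the longest interval of a chain splits it into two shorter chains,'' but a consecutive sub-collection of a chain need not be a chain --- here $\{[65,66),[68,69)\}$ has a disconnected dilate-union, so the lemma's hypothesis fails for it and (P) cannot be invoked. Since the conclusion of the lemma does hold in this example, it is your proof, not the statement, that breaks; and because (B) and (P) are, as you say, the genuine content, this is not a constant-chasing issue but a missing idea: you must handle configurations in which connectivity of $\bigcup\mathcal{G}^*$ is achieved by a large dilate bridging over several small intervals, so neighbours in the left-to-right ordering carry no useful information.

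For contrast, the paper's proof is organized precisely to avoid this trap. Its ``chain/bridge'' claim is stated for three intervals whose dilates genuinely intersect pairwise in the needed pattern ($I_2^*$ meets both $I_1^*$ and $I_3^*$, which are disjoint), and the global argument iterates outward from $J_0$ by always selecting the dilate that \emph{intersects} the current one and extends furthest, never requiring any sub-collection to have a connected dilate-union; the comparison of sizes then comes from the grid/gap computation (distance of nonadjacent dyadic intervals is a multiple of the smaller length versus the $\tfrac1{16}$-extensions), which is essentially your inputs (a) and (b) but applied only to pairs of dilates that actually meet. If you want to salvage your scheme, you would have to replace ``consecutive in the ordering'' by ``intersecting dilates'' throughout and prove (B)/(P) by an outward induction of that kind --- at which point you are reconstructing the paper's argument rather than giving a genuinely different one.
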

\begin{proof} We begin with a definition.  Consider the intervals $I_1$, $I_2$, and $I_3$ in $\mathcal{G}$ where the order corresponds to their placement on the real line from left to right.  Assuming that $I^*_1\cup I^*_2\cup I_3^*$ is an interval and that $I^*_1 \cap I^*_3=\emptyset$, we call $\{I^*_1, I_2^*, I_3^*\}$ a {\bf \em chain} and $I^*_2$ the {\bf \em bridge}.

{\em Claim:} For any chain $\{I_1^*,I_2^*, I_3^*\}$ the bridge $I_2^*$ is strictly bigger than at least one of either $I_1^*$ or $I_3^*$.  In other words
\begin{align*}
|I_2^*|> \mbox{min}(|I_1^*|, |I_3^*|).
\end{align*}
Let us assume $2^k=|I_1|\leq |I_3|=2^{\ell}$.  Since $I_1$ and $I_3$ are dyadic intervals and are nonadjacent, one has $\mbox{dist}(I_1,I_3)=n\cdot 2^k$ for some integer $n \geq 1$.  $\frac{1}{16}2^k$ is the distance between the right endpoint of $I_1$ and the right endpoint of $I^*_1$ and similarly $\frac{1}{16}2^{\ell}$ is the same for $I_3$ and $I^*_3$. Because of $I_1^* \cap I_3^* = \emptyset$, we have $n \cdot 2^k \geq \frac{1}{16}(2^k+2^{\ell})$, whence
\begin{align*}
&n \cdot 2^k \geq \frac{1}{16}(2^k+2^{\ell}) \Rightarrow (16n-1)2^k \geq 2^{\ell}\Rightarrow 16n-1 \geq 2^{\ell-k}\\ 
& \Rightarrow n \geq 2^{\ell-k-4}+\frac{1}{16} \\
&\mbox{If } \ell \geq k+4 \mbox{ then, since $n \in \mathbb{Z}$, }  n \geq 2^{\ell-k-4}+\frac{1}{16}  \Rightarrow n\geq 2^{\ell-k-4}+1\\
&\mbox{If } k \leq \ell \leq k+3 \mbox{ then }  n \geq 2^{\ell-k-4}+\frac{1}{16} \Rightarrow n\geq 1
\end{align*}
It is obvious that $I^*_2$ must cross the gap between $I_1^*$ and $I_3^*$ where the length of the gap can be calculated as $n\cdot 2^k- \frac{1}{16}(2^k+2^{\ell})$. If $\ell\geq k+4$ then
\begin{align*}
\mbox{length of gap} &= n\cdot 2^k- \frac{1}{16}(2^k+2^{\ell})\geq \left(2^{\ell-k-4}+1\right)2^k- \frac{1}{16}(2^k+2^{\ell})\\
&=2^{\ell-4}+2^k-\frac{1}{16}2^k-\frac{1}{16}2^{\ell}\\
&=\frac{15}{16}2^k
\end{align*}
Thus, in this case, the length of the gap is strictly greater than $\frac{9}{8}2^{k-1} = \frac{9}{16}2^k$ and thus $I^*_2$ must be as long as $I_1^*$.  In the case, $k \leq \ell \leq k+3$
\begin{align*}
\mbox{length of gap} &= n\cdot 2^k- \frac{1}{16}(2^k+2^{\ell})\geq 2^k- \frac{1}{16}(2^k+2^{k+3})\\
&=\frac{7}{16}2^k
\end{align*}
Therefore,  the length of the gap is strictly greater than $\frac{9}{8}2^{k-2} = \frac{9}{32}2^k$ and thus $I^*_2$ must be as long as $\frac{1}{2}|I_1^*|$.  At this point, we have shown that $|I^*_2| \geq \frac{1}{2}|I_1^*|$ in all cases. It remains to show that $|I^*_2|$ equals neither  $|I_1^*|$ nor $\frac{1}{2}|I_1^*|$. Of course, in all cases we must have $I_2^* \cap I_1^*\neq \emptyset$ if $I_2^*$ were to be a suitable bridge between $I^*_1$ and $I_3^*$, and the claim is that if  $|I^*_2| = |I_1^*|$ or $\frac{1}{2}|I_1^*|$ then  $I_2^* \cap I_1^*= \emptyset$.  With $m=k-i$, for $i=1,0$, $I_2$ and $I_1$ being non adjacent implies $\mbox{dist}(I_2,I_1)\geq 2^m$. Then
\begin{align*}
\mbox{dist}(I^*_1,I_2^*) &\geq 2^m-\frac{1}{16}(2^k+2^m)\\
&=15\cdot 2^{m-4}-2^{k-4}\\
&\geq 15\cdot 2^{k-5}-2^{k-4}\\
&=13\cdot 2^{k-5}>0
\end{align*}
Hence $I_2^* \cap I_1^*= \emptyset$ if $|I^*_2| = |I_1^*|$ or $\frac{1}{2}|I_1^*|$ and thus $|I^*_2|>|I^*_1|$.

Therefore, we have shown that for any chain $\{I^*_1,I^*_2,I^*_3\} \subset \mathcal{G}^*$, $|I^*_2| > \mbox{min}(|I^*_1|,|I^*_3|)$.  Let $\mathcal{G}^*=\{J_0, J_1, ..., J_n\}$ where the dilated intervals are ordered by length, and, for each $0\leq i\leq n$, let $J_i=[a_i,b_i)$. If each $J_i \in \mathcal{G}^*$ is contained in $J_0$ then we are done and $\cup_i J_i \subset 4 \cdot J_0$.  If not, let $J_k$ be the dilated interval that extends the furthest to the right of $J_0$ in $\mathcal{G}^*$ such that $J_0\cap J_k \neq \emptyset$ and $J_k\not\subset J_0$.  In the conclusion of the proof of the claim, part of what we showed was that if $|J_0|=|J_k|$ then $J_0 \cap J_k =\emptyset$.  We will first show that $4 \cdot J_k \subset 4 \cdot J_0$.  Note that
\begin{align*}
J_0&=[a_0, b_0) = \left[ \frac{a_0+b_0}{2}-\frac{b_0-a_0}{2}, \frac{a_0+b_0}{2}+ \frac{b_0-a_0}{2} \right) \\
4 \cdot J_0 &= \left[ \frac{a_0+b_0}{2}-4\frac{b_0-a_0}{2}, \frac{a_0+b_0}{2}+ 4\frac{b_0-a_0}{2} \right)\\
 J_k&=[a_k, b_k) = \left[ \frac{a_k+b_k}{2}-\frac{b_k-a_k}{2}, \frac{a_k+b_k}{2}+ \frac{b_k-a_k}{2} \right) \\
4 \cdot J_k &= \left[ \frac{a_k+b_k}{2}-4\frac{b_k-a_k}{2}, \frac{a_k+b_k}{2}+ 4\frac{b_k-a_k}{2} \right)
\end{align*}
and thus our goal is to show that 
\begin{align}\label{dilate ineq}
\frac{a_0+b_0}{2}-4\frac{b_0-a_0}{2} &\leq \frac{a_k+b_k}{2}-4\frac{b_k-a_k}{2} \\
\label{dilate ineq 2}\mbox{ and } \frac{a_k+b_k}{2}+ 4\frac{b_k-a_k}{2} &\leq  \frac{a_0+b_0}{2}+ 4\frac{b_0-a_0}{2}.
\end{align}
Inequality \eqref{dilate ineq} is obvious since we assumed $J_k$ is to the right of $J_0$ and smaller than $J_0$ (i.e. $b_0-a_0> b_k-a_k$), which implies
\begin{align*}
\frac{a_0+b_0}{2}-\frac{b_0-a_0}{2} &< \frac{a_k+b_k}{2}-\frac{b_k-a_k}{2} \\
\mbox{and } -\frac{3}{2}(b_0-a_0) &< -\frac{3}{2}(b_k-a_k) \\
\Rightarrow \frac{a_0+b_0}{2}-4\frac{b_0-a_0}{2} &< \frac{a_k+b_k}{2}-4\frac{b_k-a_k}{2}
\end{align*}
In order to show inequality \eqref{dilate ineq 2}, we will use the inequalities $a_k < b_0$ and $|J_k|=b_k-a_k \leq \frac{1}{2}(b_0-a_0)=\frac{1}{2}|J_0|$ which hold by assumption. First, $a_k<b_0$ implies 
\begin{align*}
b_0>a_k \Leftrightarrow \frac{b_0}{2}+\frac{b_0}{2} >\frac{a_k}{2}- \frac{-a_k}{2} &\Leftrightarrow \frac{b_0-a_0}{2}+\frac{b_0+a_0}{2} >\frac{b_k+a_k}{2}- \frac{b_k-a_k}{2}\\
&\Leftrightarrow  \frac{b_0-a_0}{2}+ \frac{b_k-a_k}{2} >\frac{b_k+a_k}{2}-\frac{b_0+a_0}{2}>0.
\end{align*}
The positivity in the second line is due to $J_k$ being to the right of $J_0$.  This shows us that the distance between the centers of the dilated intervals is less than the sum of their lengths halved. So proceeding with showing inequality \eqref{dilate ineq 2} we have
\begin{align*}
&\frac{b_0-a_0}{2}+ \frac{b_k-a_k}{2} >\frac{b_k+a_k}{2}-\frac{b_0+a_0}{2} \Leftrightarrow  \frac{b_0-a_0}{2}+\frac{b_0+a_0}{2} >\frac{b_k+a_k}{2}- \frac{b_k-a_k}{2} \\
&\Leftrightarrow  4\frac{b_0-a_0}{2}+\frac{b_0+a_0}{2} >\frac{b_k+a_k}{2}- \frac{b_k-a_k}{2} +3\frac{b_0-a_0}{2} \\
&\Rightarrow 4\frac{b_0-a_0}{2}+\frac{b_0+a_0}{2} >\frac{b_k+a_k}{2}- \frac{b_k-a_k}{2} +6\frac{b_k-a_k}{2} \hspace{.3cm}\left(b_k-a_k \leq \frac{1}{2}(b_0-a_0) \right) \\
&\Leftrightarrow 4\frac{b_0-a_0}{2}+\frac{b_0+a_0}{2} >\frac{b_k+a_k}{2}+5\frac{b_k-a_k}{2} \\
&\Rightarrow 4\frac{b_0-a_0}{2}+\frac{b_0+a_0}{2} >\frac{b_k+a_k}{2}+4\frac{b_k-a_k}{2}.
\end{align*}
Thus, for $J_k$, $4 \cdot J_k \subset 4 \cdot J_0$  and the same holds for all intervals $J_{k_{\ell}}$ such that $J_{k_{\ell}} \not\subset J_0$ and $J_{k_{\ell}} \cap J_0 \neq \emptyset$.  

Now it is possible that all intervals to the right of $J_0$ are contained in $J_0\cup J_k$, in which case we stop this process.  Otherwise, take the largest dilated interval, $J_m$, extending to the right of $J_k$ such that $J_m \cap J_{k} \neq \emptyset$.  By the maximality of $J_k$, $J_m \cap J_0 = \emptyset$. In this case we have the exact assumptions used in the claim with $J_{k}$ as the middle dilated interval.  This implies $|J_m|< |J_{k}|$ because we already know that $|J_0|>|J_{k}|$.  Thus the argument showing $4 \cdot J_{k} \subset 4 \cdot J_0$ is suitable to show that $4 \cdot J_m \subset 4 \cdot J_{k}$ and $4 \cdot J_{m} \subset 4 \cdot J_0$.

Again if each $J_i \in \mathcal{G}^*$ that is to the right of $J_0$ is contained in $J_0 \cup J_{k} \cup J_{m}$, then we stop this algorithm.  Otherwise the process continues and, at each step, we obtain the same relation as we have between $J_{k}$, $J_{m}$ and $J_0$: 
\begin{align*}
 4 \cdot J_{m} \subset  4 \cdot J_{k}  \subset 4 \cdot J_0 \Rightarrow J_{m} \subset 4 \cdot J_0
\end{align*}
We can use a similar argument for the intervals to the left of $J_0$. Since there are only finitely many $J_i$, this process must exhaust the collection of all $J_i$.
\end{proof}
The constant $\frac{9}{8}$ is not the only admissible factor by which one can increase the size of the intervals, but it is the largest constant of the form $\frac{2^j+1}{2^j}$ for which the lemma holds.  Therefore, $\frac{9}{8}$ is chosen partly by necessity and partly for convenience since the intervals we are considering are all dyadic.  Next, we establish the analogous statement to Lemma \ref{lemma} in higher dimensions.
\begin{lemma} \label{lemma2}
Let $\mathcal{G}$ be a finite collection of pairwise disjoint, nonadjacent (distance between any two is nonzero) dyadic cubes.  Let $\mathcal{G}^*$ be the collection of dilated cubes that are each of the form $\frac{9}{8}\cdot Q$ for all $Q \in \mathcal{G}$.   If $\cup_{H \in \mathcal{G}^*}H$ is a connected set, then  
\begin{align}   \bigcup_{H \in \mathcal{G}^*} H \subset 4\cdot H_0 \end{align}
where $H_0$ is the largest cube in $\mathcal{G}^*$.
\end{lemma}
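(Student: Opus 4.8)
The idea is to run a one-dimensional dyadic bookkeeping argument along a shortest path in a ``touching graph'' of the dilated cubes, thereby reducing the statement to arithmetic essentially identical to that in the proof of Lemma~\ref{lemma}. Throughout we replace every cube by its closure, which changes neither the hypothesis nor the conclusion; for a cube $Q$ write $c_Q$ for its center and $|Q|$ for its side length, so that $Q=\{x:\|x-c_Q\|_\infty\le|Q|/2\}$ and $t\cdot Q=\{x:\|x-c_Q\|_\infty\le t|Q|/2\}$. Let $H_0$ be the largest cube in $\mathcal G^*$ and write $H_0=\tfrac98 Q_0$ with $Q_0\in\mathcal G$.

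First I would build the combinatorial skeleton. Let $\Gamma$ be the graph on vertex set $\mathcal G^*$ in which distinct $H,H'$ are adjacent exactly when $H\cap H'\ne\emptyset$. Since $\bigcup_{H\in\mathcal G^*}H$ is connected, $\Gamma$ is connected: otherwise $\mathcal G^*$ splits into two nonempty families with no edges between them, and their unions are two nonempty disjoint closed sets covering $\bigcup\mathcal G^*$, contradicting connectedness. As $c_{Q_0}\in H_0\subset\bigcup\mathcal G^*$, it suffices to prove $H\subset 4\cdot H_0$ for every $H\in\mathcal G^*$. Fix such an $H$, choose a path $H=H^{(0)}\sim H^{(1)}\sim\cdots\sim H^{(r)}=H_0$ of \emph{minimal} length in $\Gamma$, and bound $\|x-c_{H_0}\|_\infty$ for $x\in H^{(0)}$ by telescoping along the path, using that two intersecting closed cubes satisfy $\|c_{H^{(i)}}-c_{H^{(i+1)}}\|_\infty\le\tfrac12\bigl(|H^{(i)}|+|H^{(i+1)}|\bigr)$.

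Two facts make this work. \textbf{(A)} If $Q,Q'\in\mathcal G$ are distinct with $\tfrac98 Q\cap\tfrac98 Q'\ne\emptyset$ and $|Q|\le|Q'|$, then $|Q'|\ge16|Q|$: all corner coordinates of $Q$ and $Q'$ lie in $|Q|\mathbb Z$, so $\mbox{dist}_\infty(Q,Q')$ is a positive integer multiple of $|Q|$, hence $\ge|Q|$; picking a coordinate realizing this separation and comparing with the largest separation compatible with $\tfrac98 Q\cap\tfrac98 Q'\ne\emptyset$ gives $\tfrac32|Q|+\tfrac12|Q'|\le\tfrac9{16}\bigl(|Q|+|Q'|\bigr)$, i.e.\ $|Q'|\ge15|Q|$, so $|Q'|\ge16|Q|$ because side lengths are powers of two. \textbf{(B)} No ``valley'' occurs along the minimal path: there is no $0<i<r$ with $|H^{(i)}|<|H^{(i-1)}|$ and $|H^{(i)}|<|H^{(i+1)}|$. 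Granting (A) and (B): by (A) consecutive side lengths along the path differ, so by (B) the sequence $|H^{(0)}|,\dots,|H^{(r)}|$ increases strictly up to a single peak and then decreases strictly; since $H_0$ is the largest cube in $\mathcal G^*$, the decreasing part must be empty, so the sequence is strictly increasing and, by (A), $|H^{(i)}|\le16^{i-r}|H_0|$. Telescoping, for $x\in H^{(0)}$,
\begin{align*}
\|x-c_{H_0}\|_\infty
&\le \tfrac12|H^{(0)}|+\sum_{i=0}^{r-1}\tfrac12\bigl(|H^{(i)}|+|H^{(i+1)}|\bigr)
= \sum_{i=0}^{r-1}|H^{(i)}|+\tfrac12|H^{(r)}| \\
&\le |H_0|\Bigl(\sum_{i=0}^{r-1}16^{i-r}+\tfrac12\Bigr)
\le |H_0|\Bigl(\sum_{j\ge1}16^{-j}+\tfrac12\Bigr)
=\tfrac{17}{30}|H_0|<2|H_0|,
\end{align*}
so $x\in 4\cdot H_0$.

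The heart of the matter — and the step I expect to be the main obstacle — is (B), where the $d$-dimensional geometry enters. I would argue by contradiction. A valley at $H^{(i)}$ produces cubes $Q_1,Q_2,Q_3\in\mathcal G$, those underlying $H^{(i-1)},H^{(i)},H^{(i+1)}$, with $|Q_2|$ strictly smallest, with $\tfrac98 Q_2$ meeting both $\tfrac98 Q_1$ and $\tfrac98 Q_3$, and — crucially, by minimality of the path — with $\tfrac98 Q_1\cap\tfrac98 Q_3=\emptyset$ (otherwise $H^{(i-1)}\sim H^{(i+1)}$ would shorten the path). By (A), $|Q_1|,|Q_3|\ge16|Q_2|$. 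Choose a coordinate axis $m$ onto which $\tfrac98 Q_1$ and $\tfrac98 Q_3$ project to disjoint intervals (such $m$ exists since $\tfrac98 Q_1\cap\tfrac98 Q_3=\emptyset$), and let $A,C$ be the projections of $Q_1,Q_3$ onto that axis. Then $A,C$ are disjoint, nonadjacent dyadic intervals of lengths $|Q_1|,|Q_3|\ge16|Q_2|$, and $\tfrac98 A,\tfrac98 C$ — the projections of $\tfrac98 Q_1,\tfrac98 Q_3$ — are disjoint. By exactly the dyadic arithmetic in the proof of Lemma~\ref{lemma}, the gap separating $\tfrac98 A$ from $\tfrac98 C$ has length at least $\tfrac7{16}\cdot16|Q_2|=7|Q_2|$. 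But the projection of $\tfrac98 Q_2$ onto the $m$-th axis is an interval of length $\tfrac98|Q_2|$ meeting both $\tfrac98 A$ and $\tfrac98 C$ (since $\tfrac98 Q_2$ meets both $\tfrac98 Q_1$ and $\tfrac98 Q_3$), hence it contains the whole gap between them, forcing $\tfrac98|Q_2|\ge7|Q_2|$ — absurd. This establishes (B). The same scheme, with ``intervals'' in place of ``cubes'', reproves Lemma~\ref{lemma}; the only genuinely $d$-dimensional input is the selection of the separating coordinate $m$.
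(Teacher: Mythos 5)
Your proposal is correct, but the way you assemble the global statement is genuinely different from the paper's, even though the decisive local ingredient is shared: when the two outer dilated cubes are disjoint you project onto a coordinate separating them and run the one-dimensional $\tfrac{9}{8}$-dyadic arithmetic of Lemma~\ref{lemma}; your claim (B) is precisely the paper's ``bridge is bigger'' claim for chains, proved by the same projection device, and your claim (A) is a quantitative sharpening of what the paper records (a factor $16$ between side lengths of any two intersecting dilated cubes, where the paper only derives a strict inequality via its ``equal or half size forces disjointness'' computation). The divergence is in the bookkeeping that converts the local claim into $\bigcup_{H\in\mathcal{G}^*}H\subset 4\cdot H_0$. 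The paper peels layers around $H_0$ --- first the dilated cubes meeting $H_0$, then the cubes meeting those but not $H_0$, and so on --- checking at each stage, coordinate by coordinate, that $4\cdot H_{m_r}\subset 4\cdot H_{k_\ell}\subset 4\cdot H_0$, and using finiteness to terminate. You instead pass to the intersection graph, take a shortest path to $H_0$, use (A) and (B) plus maximality of $H_0$ to force the side lengths to grow by a factor of at least $16$ at every step, and then telescope centers and sum a geometric series; this puts every point of every cube within $\tfrac{17}{30}|H_0|$ of the center of $H_0$, i.e.\ inside $\tfrac{17}{15}\cdot H_0$, which is strictly stronger than the stated conclusion. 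Your route buys an explicit quantitative bound and avoids the paper's nested-dilation case analysis; the paper's route never needs the factor-$16$ quantification, only the strict size comparison. The auxiliary steps you flag are all sound: passing to closures preserves the hypothesis (a set squeezed between a connected set and its closure is connected) and, since your final bound is strict, the conclusion; connectedness of the union does give connectedness of the touching graph; and in a shortest path $H^{(i-1)}$ and $H^{(i+1)}$ are non-adjacent, which is exactly what makes a valley a genuine chain.
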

\begin{proof}
Let $\mathscr{H}^d$ denote the Hausdorff measure of dimension $d$. First note that for a cube $Q$, $\frac{9}{8}\cdot Q$ is the cube with the same center as $Q$ and of diameter $\frac{9}{8}\cdot(\mbox{diameter of }Q)$. We consider a set of cubes $\{Q_1, Q_2, Q_3\} \subset \mathcal{G}$ where $Q^*_1\cup Q^*_2\cup Q_3^*$ is connected and such that $Q^*_1 \cap Q^*_3=\emptyset$.  Similar to the previous lemma,  we will call $\{Q^*_1, Q_2^*, Q_3^*\}$ a {\bf \em chain} and call $Q^*_2$ the {\bf \em bridge}.  We have a similar claim.

\medskip
{\em Claim:}  For any chain $\{Q_1^*,Q_2^*, Q_3^*\}$, the bridge, $Q_2^*$, is strictly bigger than at least one of either $Q_1^*$ or $Q_3^*$.  In other words
\begin{align*}
\mathscr{H}^d(Q_2^*)> \mbox{min}(\mathscr{H}^d(Q_1^*), \mathscr{H}^d(Q_3^*)).
\end{align*}
Let us assume $2^{dk}=\mathscr{H}^d(Q_1)\leq \mathscr{H}^d(Q_3)=2^{d\ell}$. Define $Q^i_k$ for $k=1,2,3$ and $i=1,...,d$, as the projection of $Q_k$ onto the $i$th axis. We will rely heavily on the relationship between the properties of the dyadic cubes and the properties of their projections.  We first note that for cubes $Q$ and $H$
\begin{align*}
Q\cap H= \prod_{i=1}^d Q^i \cap H^i.
\end{align*}
Thus, $Q^*_1 \cap Q^*_3 = \emptyset$ implies that there is at least one index $p \in \{1,...,d\}$ such that $Q^{*p}_1 \cap Q^{*p}_3=\emptyset$.  We also note the commutativity of projection and dilation for cubes, i.e. $Q^{*p}=Q^{p*}$.  Thus,  we can assume $Q^{p*}_1 \cap Q^{p*}_3=\emptyset$.  Now of course $Q^p_1$ and $Q^p_3$ are dyadic intervals in $\mathbb{R}$, and $2^{k}=\mathscr{H}^1(Q^p_1)\leq \mathscr{H}^1(Q^p_3)=2^{\ell}$.  Since $Q^{p*}_1 \cap Q^{p*}_3=\emptyset$, $\mbox{dist}(Q^p_1, Q^p_3)= n\cdot 2^k$ for $n \in \mathbb{N}$.  By the argument from the previous lemma, the gap between $Q^{p*}_1$ and $Q^{p*}_2$ is strictly greater than $\frac{9}{32}2^k=\frac{9}{8}2^{k-2}$.  In order for $Q^*_1 \cup Q^*_2 \cup Q^*_3$ to be connected, $(Q^*_1 \cup Q^*_2 \cup Q^*_3)^i=Q^{*i}_1 \cup Q^{*i}_2 \cup Q^{*i}_3$ must be connected for each $i \in \{1,...,d\}$.  Therefore, 
\begin{align*}
\mathscr{H}^1(Q^{p*}_2) \geq \frac{1}{2}\mathscr{H}^1(Q_1^{p*})=\frac{9}{8}2^{k-1}.
\end{align*}

Thus, we have shown that $\mathscr{H}^d(Q^*_2) \geq \frac{1}{2^d}\mathscr{H}^d(Q_1^*)$ and, analogous to what we have done in the previous lemma, we would like to show that $\mathscr{H}^d(Q^*_2) \neq \mathscr{H}^d(Q_1^*)$ and $\mathscr{H}^d(Q^*_2) \neq \frac{1}{2^d}\mathscr{H}^d(Q_1^*)$. So, similar to Lemma \ref{lemma}, we will show that if $\mathscr{H}^d(Q_2^*)$ is equal to $\mathscr{H}^d(Q_1^*)$ or $\frac{1}{2^d}\mathscr{H}^d(Q_1^*)$  then  $Q_2^* \cap Q_1^*= \emptyset$.  However, we must be careful with how we approach this particular part of the argument.  We will not be able to show that $Q_2^{*p} \cap Q_1^{*p}= \emptyset$ since on the $p$th axis $Q_1^p \cap Q_2^p$ is not necessarily empty.  However, it is clear that there is an index $j \in \{1,...,d\}$ such that $Q_1^j$ and $Q_2^j$ are not adjacent (i.e. they do not share an endpoint).  Given this, we can let $m=k-i$, for $i\in\{0, 1\}$, then $Q^j_2$ and $Q^j_1$ being non adjacent implies $\mbox{dist}(Q^j_2,Q^j_1)\geq 2^m$. Then
\begin{align*}
\mbox{dist}(Q^{j*}_1,Q_2^{j*}) &\geq 2^m-\frac{1}{16}(2^k+2^m)\\
&=15\cdot 2^{m-4}-2^{k-4}\\
&\geq 15\cdot 2^{k-5}-2^{k-4}\\
&=13\cdot 2^{k-5}>0
\end{align*}
Thus $\mathscr{H}^d(Q^{*}_2)>\mathscr{H}^d(Q^{*}_1)$.  
This implies that for any chain $\{Q^*_1,Q^*_2,Q^*_3\} \subset \mathcal{G}^*$,
\begin{align*}
\mathscr{H}^d(Q^*_2) > \mbox{min}(\mathscr{H}^d(Q^*_1), \mathscr{H}^d(Q^*_3)).
\end{align*}
Let $\mathcal{G}^*=\{H_0, H_1, ..., H_n\}$ where the dilated cubes are listed in decreasing order by size. If each $H_i \in \mathcal{G}^*$ is contained in $H_0$ then we are done and $\cup_i H_i \subset 4 \cdot H_0$.  If not, let $\{H_{k_{\ell}}\}$ be the set of cubes such that $H_{k_{\ell}} \cap H_0 \neq \emptyset$ and $H_{k_{\ell}} \not\subset H_0$.  Then, by assumption, $\mathscr{H}^d(H_{k_{\ell}}) \leq \mathscr{H}^d(H_0)$ and as before we cannot have $H_{k_{\ell}} \cap H_0 \neq \emptyset$ and $\mathscr{H}^d(H_{k_{\ell}}) = \mathscr{H}^d(H_0)$.  Thus 
$$\mathscr{H}^d(H_{k_{\ell}}) < \mathscr{H}^d(H_0) \hspace{.5cm} (*)$$ 
for each $k_{\ell}$.  Hence for each $j$
\begin{align*}
\{ x_j: a_{0,j}\leq x_j \leq b_{0,j} \} \cap \{ x_j: a_{k_{\ell},j}\leq x_j \leq b_{k_{\ell},j} \} \neq \emptyset
\end{align*}
which is clear from assumption.  Then by $(*)$,
\begin{align*}
\mathscr{H}^1(\{ x_j: a_{0,j}\leq x_j \leq b_{0,j} \}) \geq 2\mathscr{H}^1(\{ x_j: a_{k_{\ell},j}\leq x_j \leq b_{k_{\ell},j} \})
\end{align*}
By the argument from Lemma \ref{lemma}
\begin{align*}
4 \cdot \{ x_j: a_{0,j}\leq x_j \leq b_{0,j} \} \supset 4\cdot \{ x_j: a_{k_{\ell},j}\leq x_j \leq b_{k_{\ell},j} \}
\end{align*}
Whence $4 \cdot H_0 \supset 4 \cdot H_{k_{\ell}}$ for each $k_{\ell}$ which implies
\begin{align*}
\bigcup_{\ell}4 \cdot H_{k_{\ell}} \subset 4 \cdot H_0.
\end{align*}
Now if each $H_i \in \mathcal{G}^*$ is contained in $H_0 \cup \bigcup_{\ell} H_{k_{\ell}}$ then we are done. If not, let $\{H_{m_{r}}\}$ be the set of cubes that satisfy the following:
 \begin{itemize}
 \item $H_{m_{r}} \cap H_{k_{\ell}} \neq \emptyset$ for at least one $\ell$
\item  $H_{m_r} \not\in \{H_{k_{\ell}}\}$
\item  $H_{m_r} \not\subset H_0$.
\end{itemize}
 Recall also have $H_{k_{\ell}} \not\subset H_0$ for all $\ell$.  For any $H_i$ such that $H_0 \cap H_i \neq \emptyset$ either $H_i \subset H_0$ or $H_i=H_{k_{\ell}}$ for some $\ell$.  Therefore, for any $m_r$,  $H_{m_r} \cap H_0 = \emptyset$.  Furthermore, for any $\ell$ and $r$ such that $H_{m_r} \cap H_{k_{\ell}} \neq \emptyset$, $\{H_0, H_{k_{\ell}}, H_{m_r}\}$ forms a chain with $H_{k_{\ell}}$ as the bridge.  Thus, $\mathscr{H}^d(H_0)> \mathscr{H}^d(H_{k_{\ell}})$ and $\mathscr{H}^d(H_{k_{\ell}})> \min(\mathscr{H}^d(H_{0}),\mathscr{H}^d(H_{m_{r}}))$ implies that $\mathscr{H}^d(H_{k_{\ell}})> \mathscr{H}^d(H_{m_{r}})$.  By the same argument that shows us that $4 \cdot H_0 \supset 4 \cdot H_{k_{\ell}}$, we have $4 \cdot H_{k_{\ell}} \supset 4 \cdot H_{m_r}$.  

Again if each $H_i \in \mathcal{G}^*$ is contained in $H_0 \cup \bigcup_{\ell} H_{k_{\ell}} \cup \bigcup_{r} H_{m_{r}}$, then we are done.  Otherwise the process continues and, at each step, we get the same relation as we have between the $H_{k_{\ell}}$, $H_{m_r}$ and $H_0$: 
\begin{align*}
\bigcup_{r} 4 \cdot H_{m_{r}} \subset \bigcup_{\ell} 4 \cdot H_{k_{\ell}}  \subset 4 \cdot H_0 \Rightarrow \bigcup_{j} H_{m_{r}} \subset 4 \cdot H_0
\end{align*}
Since there are only finitely many $H_i$, this process has to exhaust the collection $\{H_i\}$.
\end{proof}
\section{Fourier Series}
We turn our attention to $S_Nf=D_N*f$, where $S_Nf$ in the $N$-th partial sum of the trigonometric Fourier series of $f$ and $D_N$ is the Dirichlet Kernel.  We note that with $S_Nf$ replacing $f$ in Prop. \ref{1st reduction} we are considering the integral
\begin{align*}
\int_{\mathbb{T} \setminus E} |S_Nf(\theta)|^2 \,d\theta
\end{align*}
However, we have no hope of bounding this by $\lambda \|f\|^2_1$ or, equivalently, showing that for each $f \in L^1(\mathbb{T})$ ($\|f\|_1=1$) there exists an $E \subset \mathbb{T}$ such that $|E| \lesssim 1/\lambda$ and
\begin{align*}
 \int_{\mathbb{T} \setminus E} |S_Nf(\theta)|^2 \,d\theta \leq C\lambda
\end{align*}
 for all $N\geq 1$.  Of course, if we were to allow $E$ to depend on $N$ and replace $\lambda$ with $\lambda^2$, this could be accomplished, since the operator $S_N$ is weak $L^1$ bounded.  In our case, the bounds are not possible due to the example of Kolmogoroff's sequence of resonance measures (\cite{schlag13} Ch. 6), $\mu_n$ , such that $\|\mu_n\|=1$ and
\begin{align*}
\limsup_{N \rightarrow \infty} \,(\log n)^{-1}|S_N\mu_n(x)|>0
\end{align*}
for almost every $x \in \mathbb{T}$.  However, if we take the average of the first $N$ of these integrals, we obtain the following result:
\begin{theorem} \label{first thing}
Let $\lambda>0$.  Then for any $f \in L^1(\mathbb{T})$, there exists $E \subset \mathbb{T}$, with $|E| \leq \frac{1}{\lambda}$ such that 
\begin{align} \label{first}
\sup_{N \geq 1} \frac{1}{N} \sum_{n=1}^N \int_{\mathbb{T} \setminus E} |S_{n}f(\theta)|^2\,d\theta \lesssim \lambda \|f\|_1^2  
\end{align}
where $S_nf(x)= \mathcal{F}^{-1}(\hat{f}\chi_{[-n,n]})(x)$.
\end{theorem}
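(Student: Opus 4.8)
The plan is to reduce \eqref{first} to estimates on the pieces of a Calder\'on--Zygmund decomposition performed at an appropriate multiple of the threshold, with the exceptional set built out of fixed dilations of the bad intervals so that, crucially, $E$ does not depend on $N$. Assume $\|f\|_1=1$ and $\lambda$ large (the case $\lambda\le1$ is trivial, take $E=\mathbb{T}$). Perform a C--Z decomposition at height $C\lambda$, obtaining disjoint dyadic bad intervals $\mathcal{B}$ with $\lambda\lesssim|I|^{-1}\int_I|f|\lesssim\lambda$ and $\sum_{I\in\mathcal{B}}|I|\le\frac{1}{C\lambda}$, and write $f=g+b$ with $b=\sum_{I\in\mathcal{B}}\chi_I f$, so that $\|g\|_\infty\lesssim\lambda$ and $\|g\|_1\le1$. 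Set $E=\bigcup_{I\in\mathcal{B}}\kappa\cdot I$ for a fixed constant $\kappa$ (large enough that Lemma~\ref{lemma} applies and that $\mathrm{dist}(\mathbb{T}\setminus E,\,4\cdot I)\gtrsim|I|$ for every $I\in\mathcal{B}$); then $|E|\le\frac{\kappa}{C\lambda}\le\frac1\lambda$ once $C$ is chosen large. For the good part, since $S_n$ is an orthogonal projection on $L^2(\mathbb{T})$ we get $\frac1N\sum_{n=1}^N\int_{\mathbb{T}}|S_ng|^2\le\frac1N\sum_{n=1}^N\|g\|_2^2=\|g\|_2^2\le\|g\|_\infty\|g\|_1\lesssim\lambda$, with no exceptional set required.

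For the bad part one fixes $N$ and splits $\mathcal{B}=\mathcal{B}_1\sqcup\mathcal{B}_2$ into intervals of length $>\frac1N$ resp.\ $\le\frac1N$, writing $b=b_1+b_2$ accordingly; since $\widehat{V_N}\equiv1$ on $[-N,N]$ one has $S_nb_j=S_n(V_N*b_j)$ for all $n\le N$, which returns us to the setting of Proposition~\ref{2nd reduction}. For $b_2$ the argument used there for the term $b_2^{(N)}$ gives $\|V_N*b_2\|_\infty\lesssim\lambda$, whence by Plancherel and Young $\int_{\mathbb{T}}|S_nb_2|^2\le\|V_N*b_2\|_2^2\le\|V_N*b_2\|_\infty\|V_N*b_2\|_1\lesssim\lambda$ for every $n\le N$. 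For $b_1$ we split $V_N*b_1=b^*+\tilde b$ exactly as in Proposition~\ref{2nd reduction}, with $b^*=\sum_{I\in\mathcal{B}_1}\chi_{I^*}(V_N*f_I)$ and $I^*:=\tfrac98\cdot I$ (the dilation matched to Lemma~\ref{lemma}); by the Claim in that proof $\|\tilde b\|_\infty\lesssim\lambda$, and since $\|\tilde b\|_1\le\sum_I\|V_N*f_I\|_1\lesssim\sum_I\|f_I\|_1\le1$, Plancherel again yields $\int_{\mathbb{T}}|S_n\tilde b|^2\le\|\tilde b\|_2^2\lesssim\lambda$ for $n\le N$. Averaging these three contributions over $n$ costs nothing.

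The remaining and essential term is $\frac1N\sum_{n=1}^N\int_{\mathbb{T}\setminus E}|S_nb^*|^2$. Here the relation \eqref{setineq} of Proposition~\ref{2nd reduction} fails: the Dirichlet kernel decays only like $|D_n(\theta)|\lesssim\min(n,|\theta|^{-1})$, so $S_nb^*=D_n*b^*$ is not supported inside $E$ but leaks out through its slowly decaying tails. To control this, decompose $\mathrm{supp}(b^*)\subseteq\bigcup_{I\in\mathcal{B}_1}\tfrac98 I$ into its connected components; by Lemma~\ref{lemma} (after, if necessary, replacing $\mathcal{B}$ by a non-adjacent subcollection with comparable union) each component $C_m$ lies inside $4\cdot J_0^{(m)}\subseteq E$, where $J_0^{(m)}\in\mathcal{B}_1$ is its largest interval, and the pieces $b_m^*:=\chi_{C_m}b^*$ satisfy $\|b_m^*\|_1\lesssim\lambda|J_0^{(m)}|$, $\sum_m\|b_m^*\|_1\le\|b\|_1\le1$, while for $\theta\in\mathbb{T}\setminus E$ one has $\mathrm{dist}(\theta,C_m)\gtrsim|J_0^{(m)}|$. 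Inserting the tail bound on $D_n$ one is led to a pointwise estimate of the shape $|S_nb^*(\theta)|\lesssim\lambda\sum_m\frac{|J_0^{(m)}|}{\mathrm{dist}(\theta,C_m)}$; squaring and integrating over $\mathbb{T}\setminus E$, the diagonal terms are handled by $\int_{\mathrm{dist}\ge 2|J_0^{(m)}|}\mathrm{dist}^{-2}\lesssim|J_0^{(m)}|^{-1}$ and the off-diagonal terms by the elementary fact $\int_0^L(\log\frac Ls)^2\,ds=2L$, which together give $\int_{\mathbb{T}\setminus E}|S_nb^*|^2\lesssim\lambda^2\sum_m|J_0^{(m)}|\lesssim\lambda/C$. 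The Ces\`aro average then absorbs the residual $N$-dependence (the collection $\mathcal{B}_1$, and hence $b^*$, depends on $N$ and on which intervals count as ``large''), and combining all the pieces and letting $N$ vary proves \eqref{first}; scaling removes the normalization $\|f\|_1=1$.

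The main obstacle is the last step: the $\ell^2$-control, outside $E$, of the Dirichlet-kernel tails of $b^*$. This is precisely what the covering lemma was constructed for, and it is where the slow decay of $D_n$ — as opposed to the $|\theta|^{-2}$ decay of $V_N$, which was used freely in the treatment of $b_2$ and $\tilde b$ — forces the real work; it is also the point at which the averaging over $n$ enters in an essential way, consistently with the impossibility, by Kolmogoroff's example, of bounding a single $\int_{\mathbb{T}\setminus E}|S_nf|^2$ by $C\lambda\|f\|_1^2$.
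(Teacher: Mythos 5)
Your overall architecture (Calder\'on--Zygmund decomposition at height comparable to $\lambda$, an $N$-independent set $E=\bigcup_{I}\kappa\cdot I$, the split $b=b_1+b_2$ by interval length relative to $1/N$, the further split $V_N*b_1=b^*+\tilde b$ with $I^*=\tfrac98\cdot I$, and Lemma \ref{lemma} applied to the components of $\mathrm{supp}(b^*)$) matches the paper, and your treatment of $g$, $b_2$ and $\tilde b$ is correct. The gap is precisely at the term you yourself call essential, $\frac1N\sum_{n\le N}\int_{\mathbb{T}\setminus E}|S_nb^*|^2$. There you put absolute values inside the convolution and use only $|D_n(x)|\lesssim\min(n,|x|^{-1})$, arriving at $|S_nb^*(\theta)|\lesssim\lambda\sum_m L_m/\mathrm{dist}(\theta,C_m)$ with $L_m=|J_0^{(m)}|$, and then assert that $\int_{\mathbb{T}\setminus E}\bigl(\sum_m L_m/\mathrm{dist}(\theta,C_m)\bigr)^2d\theta\lesssim\sum_m L_m$. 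That inequality is false: take $M$ bad intervals of equal dyadic length $L>1/N$ (bumps of height $2\lambda$, so they are genuine C--Z intervals), with centers separated by $A L$ for a fixed large $A=A(\kappa)$; each is its own component, and on a set of measure $\sim MAL$ outside $E$ the sum is $\gtrsim A^{-1}\log M$, so the left side is $\gtrsim ML(\log M)^2/A$ while the right side is $ML$, and $M$ may be taken arbitrarily large. The cross terms you propose to absorb with $\int_0^L(\log(L/s))^2\,ds=2L$ really do produce these $(\log)^2$ losses --- compare Proposition \ref{p>2}, where exactly such $\log N$ factors appear because the kernel $\chi_{\{|x|>1/N\}}|x|^{-1}$ has $L^1$ norm $\sim\log N$. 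There is also a structural red flag: in your scheme every individual $n\le N$ is estimated identically, so the Ces\`aro average is never used quantitatively; if the argument worked it would yield $\sup_N\int_{\mathbb{T}\setminus E}|S_Nf|^2\lesssim\lambda\|f\|_1^2$ with a single $E$, which the paper explicitly rules out via Kolmogoroff's resonance construction.

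The paper avoids this by spending the average over $n$ at the very first step: writing $D_n=K_{n,1}+K_{n,2}$ with the cut at $|x|=1/N$, Bessel/Plancherel in the summation variable $n$ gives $\sum_{n=1}^N|(K_{n,2}*f)(\theta)|^2\lesssim\int_{|x|>1/N}|f(\theta-x)|^2|e(x)-1|^{-2}\,dx$, and $K_{n,1}$ is handled by Cauchy--Schwarz, so that $\frac1N\sum_{n\le N}|S_nf(\theta)|^2\lesssim (K_N*|f|^2)(\theta)$ with $K_N(x)=\frac1N\min(N^2,|x|^{-2})$. The square is now inside the convolution and the kernel decays quadratically; running your component decomposition on $K_N*|b^*|^2$, Bernstein ($|f_I^{(N)}|\le N\|f_I\|_1$) removes one power and the tail integral $\int_{\mathbb{T}\setminus 5\cdot J_i}|x|^{-2}\,dx\lesssim|J_i|^{-1}$ closes the estimate componentwise, with no cross terms and no logarithms. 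Replacing your pointwise Dirichlet-tail bound by this Plancherel-in-$n$ reduction is the missing idea; with it, the rest of your outline goes through essentially as written.
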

\begin{proof}
Assume $\|f\|_1$=1 and $f\geq 0$. We first fix $N \in \mathbb{N}$, $\lambda >0$, and consider the Dirichlet kernel of $S_n$, $D_n(x)=\sum_{m=-n}^{n} e(mx)$, where $e(mx):= e^{2\pi i m x}$.  We note that
\begin{align*}
D_n(x)= \sum_{m=-n}^{n} e(mx)=\frac{e((n+1)x)-e(-nx)}{e(x)-1}= K_{n,1}(x) +K_{n,2}(x)
\end{align*}
where $K_{n,1}(x):= D_n(x)\chi_{[|x|\leq 1/N]}$ and $K_{n,2}(x):= D_n(x)\chi_{[|x|> 1/N]}$.
Then for $K_{n,2}$, by Plancherel
\begin{align*}
\sum_{n=1}^N|(K_{n,2}*f)(\theta)|^2 &\lesssim \sum_{m \in \mathbb{Z}} |\widehat{F}_{\theta}(m)|^2= \int_{|x|>1/N}\frac{|f(\theta-x)|^2}{|e(x)-1|^2} \, dx \\    
& \lesssim \int_{\mathbb{T}}\min(N^2, |x|^{-2})|f(\theta-x)|^2\,dx 
\end{align*}
where $F_{\theta}(x)=\chi_{[|x|> 1/N]}\frac{f(\theta-x)}{e(x)-1}$.  For $K_{n,1}$, we note that $|K_{n,1}(x)|\lesssim N$ for all $1 \leq n \leq N$ and $x \in \mathbb{T}$.  Thus, by Jensen,
\begin{align*} 
\sum_{n=1}^N |(K_{n,1}*f)(\theta)|^2 &\lesssim \sum_{n=1}^N \int_{\mathbb{T}} \chi_{[|x|\leq 1/N]} N |f(\theta-x)|^2\,dx \\ 
&\lesssim \int_{\mathbb{T}}\min(N^2, |x|^{-2})|f(\theta-x)|^2\,dx 
\end{align*}
Let $K_N(x):= \frac{1}{N}\min(N^2,|x|^{-2})$, and we note that $\|K_N\|_1$ is independent of $N$, then
\begin{align} \label{main split}
\frac{1}{N} \sum_{n=1}^N \int_{\mathbb{T} \setminus E} |S_{n}f(\theta)|^2\,d\theta \leq  2\int_{\mathbb{T} \setminus E} \int_{\mathbb{T}}K_N(x)|f(\theta-x)|^2\,dx d\theta 
\end{align}
for any set $E \subset \mathbb{T}$.  We now perform a Calder\'{o}n-Zygmund decomposition at height $\lambda$.  Let $\mathcal{B}$ be the set of "bad" intervals given by the decomposition.  Let $f=g+b$ where $g$ is supported outside of the union of the intervals in $\mathcal{B}$, $|g|\leq \lambda$,  $b=\sum_{I \in \mathcal{B}}\chi_I f$ and 
\begin{align*}
\lambda < \frac{\int_I |f|}{|I|} \leq 2 \lambda \\
\mbox{and } \left|\bigcup_{I \in \mathcal{B}} I\right| \leq \frac{\|f\|_1}{\lambda}=\frac{1}{\lambda}.
\end{align*} 
Here we observe that we can regularize $f$ without loss of generality.  We consider $f^{(N)}=V_N*f$, $g^{(N)}=V_N*g$ and $b^{(N)}=V_N*b$ where $V_N=\frac{1}{N}\sum_{m=N}^{2N-1}S_m$ is the de la Vall\'{e}e Poussin kernel.  Note that if we replace $f$ in \eqref{first} with $f^{(N)}$, by Young's inequality
\begin{align*}
\frac{1}{N} \sum_{n=1}^N \int_{\mathbb{T} \setminus E} |S_{n}f^{(N)}(\theta)|^2\,d\theta &\lesssim\lambda \|f^{(N)}\|_1^2\\ 
 &\lesssim \lambda \|V_N\|_1^2 \|f\|_1^2 \\
&\lesssim \lambda \|f\|_1^2.
\end{align*}
This regularity will be used almost exclusively for Bernstein's inequality which will give us $\|f^{(N)}\|_{\infty} \lesssim N\|f\|_1$.  This fortunately also preserves $|g^{(N)}(x)| \lesssim\lambda $ by Young's inequality. We then have
\begin{align*} 
&\int_{\mathbb{T} \setminus E} \int_{\mathbb{T}}K_N(x)|f^{(N)}(\theta-x)|^2\,dx d\theta \\
&\lesssim \int_{\mathbb{T} \setminus E} \int_{\mathbb{T}}K_N(x)|g^{(N)}(\theta-x)|^2\,dx d\theta+\int_{\mathbb{T} \setminus E} \int_{\mathbb{T}}K_N(x)|b^{(N)}(\theta-x)|^2\,dx d\theta.
\end{align*}
Using $|g^{(N)}(x)| \lesssim \lambda$ with Young's inequality we obtain
\begin{align*} 
&\int_{\mathbb{T} \setminus E} \int_{\mathbb{T}}K_N(x)|g^{(N)}(\theta-x)|^2\,dx d\theta \\
&\lesssim \lambda \int_{\mathbb{T} \setminus E} (K_N*|g^{(N)}|)(\theta) \,d\theta \leq \lambda \|K_N\|_1\|g^{(N)}\|_1 \\
&\lesssim \lambda. 
\end{align*}
Inequality \eqref{main split} shows that this is all we need for $g$. Now since $b^{(N)}=V_N*b$, it is possible that $b^{(N)}$ is supported on all $\mathbb{T}$. Let $\mathcal{B}=\mathcal{B}_1 \cup \mathcal{B}_2$, where $\mathcal{B}_1$ is the set of all bad intervals of length greater than $1/N$ and $\mathcal{B}_2$ are the intervals in $\mathcal{B}$ of length less than or equal to $1/N$. Similarly, let $b=b_1+b_2$, where $b_1=\sum_{I \in \mathcal{B}_1}f_I$ and $b_2=\sum_{I \in \mathcal{B}_2} f_I$. Here $f_I:= f \chi_I$. Now we can choose $E:=\cup_{I \in \mathcal{B}} \; c \cdot I$. Where $c\cdot I$ is the interval with the same center as $I$, but of length $c|I|$.  $c$ is a constant whose value will be decided later which will not depend on $N$. Furthermore, let $I^*:=\frac{9}{8}\cdot I$ and $V_N*b_1=b^*+\tilde{b}$, where 
\begin{align*}
b^*=\sum_{I \in \mathcal{B}_1}\chi_{I^*}(V_N*f_I) \hspace{.4cm} \mbox{ and } \hspace{.4cm}\tilde{b}=\sum_{I \in \mathcal{B}_1}\chi_{\mathbb{T}\setminus  I^*}(V_N*f_I).
\end{align*}  
Then we first consider $b^*$
\begin{align*} 
&\int_{\mathbb{T} \setminus E} \int_{\mathbb{T}}K_N(x)|b^*(\theta-x)|^2\,dx d\theta \\
 &=\int_{\mathbb{T} \setminus E} \int_{\mathbb{T}}K_N(x)\left|\sum_{I \in \mathcal{B}_1 }\chi_{ I^*}(\theta-x) f^{(N)}_I(\theta-x)\right|^2\,dx d\theta. \end{align*}
We would like to use the assumption that $\mathcal{B}_1$ is a collection of disjoint intervals, but $\{ I^*\}_{I \in \mathcal{B}_1}$ is not a collection of disjoint intervals. We alternatively label the connected components of $\cup_{I \in \mathcal{B}_1} I^*$ as $\mathcal{C}_i$. Then
\begin{align*}
&\int_{\mathbb{T} \setminus E} \int_{\mathbb{T}}K_N(x)\left|\sum_{I \in \mathcal{B}_1 }\chi_{ I^*}(\theta-x) f^{(N)}_I(\theta-x)\right|^2\,dx d\theta \\
&=\sum_i \int_{\mathbb{T} \setminus E} \int_{\mathbb{T}}K_N(x)\left|\sum_{I^* \subset \mathcal{C}_i }\chi_{ I^*}(\theta-x) f^{(N)}_I(\theta-x)\right|^2\,dx d\theta \\
&\leq \sum_i \int_{\mathbb{T} \setminus E} \int_{\mathbb{T}}K_N(x)\left(\sum_{I^* \subset \mathcal{C}_i }\chi_{ I^*}(\theta-x) \left|f^{(N)}_I(\theta-x)\right|\right)^2\,dx d\theta.
\end{align*}
Note that in the second and third lines $I \in \mathcal{B}_1$.  For any $I^* \subset \mathcal{C}_i$, $\chi_{I^*} \leq \chi_{\mathcal{C}_i}$, and thus
\begin{align*}
 &\sum_i \int_{\mathbb{T} \setminus E} \int_{\mathbb{T}}K_N(x)\left(\sum_{I^* \subset \mathcal{C}_i }\chi_{ I^*}(\theta-x) \left|f^{(N)}_I(\theta-x)\right|\right)^2\,dx d\theta \\
&\lesssim  \sum_i \int_{\mathbb{T} \setminus E} \int_{\mathbb{T}}K_N(x)\left(\sum_{I^* \subset \mathcal{C}_i }\chi_{\mathcal{C}_i}(\theta-x) \left|f^{(N)}_I(\theta-x)\right|\right)^2\,dx d\theta.
\end{align*}
 Now before we can move on, we need to impose a restriction on the geometry of our collection $\mathcal{B}_1$.  It would be helpful if for any pair of intervals in $\mathcal{B}_1$, the two intervals are not adjacent (i.e.\ they do not share an endpoint).  This is an easy restriction to impose by simply splitting $\mathcal{B}_1$ into at most 3 subcollections.  We lose a factor of 3 in the inequality, but this allows us to invoke Lemma \ref{lemma}.  For each $\mathcal{C}_i$, let $J_i$ be the largest interval such that $J_i \in \mathcal{B}_1$ and $J_i^* \subset \mathcal{C}_i$. Lemma \ref{lemma} implies that $\mathcal{C}_i \subset 4 \cdot J^*_i=\frac{9}{2} \cdot J_i$ and therefore $\chi_{\mathcal{C}_i} \leq \chi_{4 \cdot J^*_i}$ giving us the following: for any $i$,
\begin{align*}
&\int_{\mathbb{T} \setminus E} \int_{\mathbb{T}}K_N(x)\left(\sum_{I^* \subset \mathcal{C}_i } \chi_{\mathcal{C}_i}(\theta-x)\left|f^{(N)}_I(\theta-x)\right|\right)^2\,dx d\theta \\
&\leq \int_{\mathbb{T} \setminus E} \int_{\mathbb{T}}K_N(x)\left(\sum_{I^* \subset \mathcal{C}_i }\chi_{4\cdot J^*_i}(\theta-x) \left|f^{(N)}_I(\theta-x)\right|\right)^2\,dx d\theta\\
&= \int_{\mathbb{T} \setminus E} \int_{\mathbb{T}}K_N(x)\left(\chi_{4\cdot J^*_i}(\theta-x) \sum_{I^* \subset \mathcal{C}_i }\left|f^{(N)}_I(\theta-x)\right|\right)^2\,dx d\theta.
\end{align*}
Henceforth we will take $c$ to be 5 for $E=\bigcup_{I \in \mathcal{B}} c \cdot I$ and replace $\mathbb{T} \setminus E$ with $\mathbb{T} \setminus c \cdot J_i$ in the integral:
\begin{align*}
& \int_{\mathbb{T} \setminus E} \int_{\mathbb{T}}K_N(x)\left(\chi_{4\cdot J^*_i}(\theta-x)\sum_{ I^* \subset \mathcal{C}_i } \left|f^{(N)}_I(\theta-x)\right|\right)^2\,dx d\theta \\
&\leq \frac{1}{N} \int_{\mathbb{T} \setminus c\cdot J_i} \int_{\mathbb{T}}\min(N^2, |x|^{-2})\left(\chi_{4\cdot J^*_i}(\theta-x)\sum_{I^* \subset \mathcal{C}_i } \left|f^{(N)}_I(\theta-x)\right|\right)^2\,dx d\theta 
\end{align*}
$\theta-x \in 4 \cdot J^*_i=\frac{9}{2} \cdot J_i$ and $\theta \in \mathbb{T}\setminus 5 \cdot J_i$, so $|x| \geq \frac{1}{4}|J_i| > \frac{1}{4N}$ and thus
\begin{align*}
&\frac{1}{N} \int_{\mathbb{T} \setminus c\cdot J_i} \int_{\mathbb{T}}\min(N^2, |x|^{-2})\left(\chi_{4\cdot J^*_i}(\theta-x)\sum_{I^* \subset \mathcal{C}_i } \left|f^{(N)}_I(\theta-x)\right|\right)^2\,dx d\theta \\
&\lesssim\frac{1}{N}\int_{\mathbb{T} \setminus c\cdot J_i} \int_{\mathbb{T}}|x|^{-2}\left(\chi_{4\cdot J^*_i}(\theta-x)\sum_{ I^* \subset \mathcal{C}_i } \left|f^{(N)}_I(\theta-x)\right|\right)^2\,dx d\theta.
\end{align*}
Now Bernstein (or Young) gives the inequality $\sum_{I^* \in \mathcal{C}_i } \left|f^{(N)}_I\right|\leq N\sum_{I^* \subset \mathcal{C}_i } \|f_I\|_1$, so
\begin{align*}
&\frac{1}{N}\int_{\mathbb{T} \setminus c\cdot J_i} \int_{\mathbb{T}}|x|^{-2}\left(\chi_{4\cdot J^*_i}(\theta-x)\sum_{ I^* \subset \mathcal{C}_i } \left|f^{(N)}_I(\theta-x)\right|\right)^2\,dx d\theta \\
& \lesssim \left(\sum_{I^* \subset \mathcal{C}_i }\|f_I\|_1\right)\int_{\mathbb{T} \setminus c\cdot J_i} \int_{\mathbb{T}} |x|^{-2}\chi_{4\cdot J^*_i}(\theta-x)\left(\sum_{I^* \subset \mathcal{C}_i } \left|f^{(N)}_I(\theta-x)\right|\right)\,dx d\theta \\
 &\lesssim\left(\sum_{I^* \subset \mathcal{C}_i }\|f_I\|_1\right)\int_{\mathbb{T}}\chi_{4\cdot J^*_i}(x)\sum_{I^* \subset \mathcal{C}_i } |f^{(N)}_I(x)|\int_{\mathbb{T} \setminus c\cdot J_i}  |\theta-x|^{-2}\, d\theta dx 
\end{align*}
By H\"{o}lder
\begin{align*} 
&\left(\sum_{I^* \subset \mathcal{C}_i }\|f_I\|_1\right)\int_{\mathbb{T}}\chi_{4\cdot J^*_i}(x)\sum_{I^* \subset \mathcal{C}_i } |f^{(N)}_I(x)|\int_{\mathbb{T} \setminus c\cdot J_i}  |\theta-x|^{-2}\, d\theta dx \\
&\lesssim \left(\sum_{I^* \subset \mathcal{C}_i }\|f_I\|_1\right)\left(\sum_{I^* \subset \mathcal{C}_i }\|f^{(N)}_I\|_1\right) \int_{\mathbb{T}} ||J_i| + \theta|^{-2} \,d\theta \\ 
&\lesssim \left(\sum_{I^* \subset \mathcal{C}_i }\|f_I\|_1\right)^2 |J_i|^{-1} 
\end{align*}
Now from the properties of the Calder\'{o}n-Zygmund decomposition
\begin{align*}
\left(\sum_{I^* \subset \mathcal{C}_i }\|f_I\|_1\right) |J_i|^{-1} &=\sum_{I^* \subset \mathcal{C}_i }\frac{\|f_I\|_1}{|J_i|} \\
&\lesssim \sum_{I^* \subset \mathcal{C}_i }\frac{\lambda|I|}{|J_i|} = \lambda \frac{\sum_{I^* \subset \mathcal{C}_i}|I|}{|J_i|} \\
&\lesssim \lambda\frac{\frac{9}{2}|J_i|}{|J_i|} \hspace{.5cm} \mbox{By Lemma } \ref{lemma}\\
&\lesssim\lambda
\end{align*}
Summing over the connected components yields
\begin{align*}
&\sum_i \int_{\mathbb{T} \setminus E} \int_{\mathbb{T}}K_N(x)\left|\sum_{I^* \subset \mathcal{C}_i }\chi_{ I^*}(\theta-x) f^{(N)}_I(\theta-x)\right|^2\,dx d\theta \\
&\lesssim \sum_i \left(\sum_{I^* \subset \mathcal{C}_i }\|f_I\|_1\right)^2 |J_i|^{-1} \lesssim \sum_i \lambda \left(\sum_{I^* \subset \mathcal{C}_i }\|f_I\|_1\right) =\lambda \sum_{I \in \mathcal{B}_1} \|f_I\|_1 \\
&\lesssim \lambda
\end{align*}
as desired. Now, for $\tilde{b}$, we have
\begin{align*} 
\int_{\mathbb{T} \setminus E} \int_{\mathbb{T}}K_N(x)|\tilde{b}(\theta-x)|^2\,dx d\theta &=\int_{\mathbb{T} \setminus E} (K_N*|\tilde{b}|^2)(\theta)\, d\theta\\
&\lesssim \|K_N*|\tilde{b}|^2\|_1 \leq \|K_N\|_1\|\tilde{b}\|_2^2 \hspace{.4cm}\mbox{By Young's inequality}\\
 &\lesssim \int_{\mathbb{T}}\left|\tilde{b}(x) \right|^2\,dx= \int_{\mathbb{T}}\left|\sum_{I \in \mathcal{B}_1 }\chi_{\mathbb{T}\setminus I^*}(x) f^{(N)}_I(x)\right|^2\,dx \\
&\lesssim \int_{\mathbb{T}} \sum_{I \in \mathcal{B}_1} \chi_{\mathbb{T}\setminus I^*}(x)\left|f_I^{(N)}(x)\right|\left|\sum_{I \in \mathcal{B}_1} \chi_{\mathbb{T}\setminus I^*}f^{(N)}_I(x)\right|dx .
\end{align*}
 
We are done with $b^*$ and we include the following claim only for completeness. Indeed, we have handled the case when $I^*=2\cdot I$ in Proposition \ref{2nd reduction}, and the argument is essentially the same absent a change in constants.

\medskip
 {\bf Claim: }$\|\tilde{b}\|_{\infty}=\|\sum_{I}\chi_{\mathbb{T}\setminus I^*}f^{(N)}_I\|_{\infty} \lesssim \lambda$
\medskip

 We consider the two possible cases for any $x \in \mathbb{T}$: either (1) $x \in \mathbb{T} \setminus \cup I^*$ or (2) $x \in I^*$ for at least one $I \in \mathcal{B}_1.$ In the first case, for any $x \in \mathbb{T} \setminus \cup I^*$, $\chi_{\mathbb{T}\setminus I^*}(x)=1$ so
\begin{align*}
\left|\tilde{b}(x)\right|&\lesssim \sum_{I \in \mathcal{B}_1} \chi_{\mathbb{T}\setminus I^*}(x)|f^{(N)}_I(x)|=\sum_{I \in \mathcal{B}_1} \chi_{\mathbb{T}\setminus I^*}(x)|(V_N*f_I)(x)|=\sum_{I \in \mathcal{B}_1}|(V_N*f_I)(x)| \\
&\lesssim \sum_{I \in \mathcal{B}_1}(|V_N|*|f_I|)(x)
\end{align*}
In the second case, $x \in H_k^*$ for some subcollection of $\mathcal{B}_1$, $\{H_k\}$. Then  $\chi_{\mathbb{T}\setminus I^*}(x)=1$ for $I^* \not\in \{H_k\}$ and $\chi_{\mathbb{T}\setminus H_k^*}(x)=0$, so 
\begin{align*}
\tilde{b}(x)&=\sum_{I \in \mathcal{B}_1} \chi_{\mathbb{T}\setminus I^*}(x)|f^{(N)}_I(x)|=\sum_{I \in \mathcal{B}_1 \atop I \not\in \{H_k\}} \chi_{\mathbb{T}\setminus I^*}(x)|f^{(N)}_I(x)| \\
&\lesssim \sum_{I \in \mathcal{B}_1 \atop I \not\in \{H_k\}} (|V_N|* |f_I|)(x).
\end{align*}
In this case, for every $|V_N|*|f_I|$ in the sum, $x \in \mathbb{T} \setminus I^*$. So in both cases, we are taking a sum of $(|V_N|*|f_I|)(x)$ where $x \not\in \cup I^*$ and the union is taken over the same intervals as the sum.  Therefore, it suffices to assume that we are in the first case and $x \in \mathbb{T} \setminus \cup I^*$, where the union is taken over all $I \in \mathcal{B}_1$.  So we fix some $x  \in \mathbb{T} \setminus \cup I^*$.  First in order to bound each $|V_N|* |f_I|$, we recall that $|V_N(y)| \lesssim \frac{1}{N} \mbox{min}(N^2, |y|^{-2})$, then
\begin{align*}
&|\tilde{b}(x)| \lesssim \sum_{I \in \mathcal{B}_1}(|V_N|*|f_I|)(x) = \sum_{I \in \mathcal{B}_1} \int_{\mathbb{T}} |V_N(x-y)| |f_I(y)|\,dy \\
&\lesssim \int_{\mathbb{T}} \frac{1}{N}\mbox{min}(N^2,|x-y|^{-2})\sum_{I \in \mathcal{B}_1} |f_I(y)|\,dy
\end{align*}
The $|f_I(y)|$ gives us that the product is nonzero for $y \in I$, and by assumption $x \in \mathbb{T} \setminus I^*$.  Therefore, $|x-y|> \frac{1}{16}|I|>\frac{1}{16N}$(for every $I \in \mathcal{B}_1$) which implies $|V_N(x-y)| \lesssim \frac{1}{N}|x-y|^{-2}$ and 
\begin{align*}
&\int_{\mathbb{T}} \frac{1}{N}\mbox{min}(N^2,|x-y|^{-2})\sum_{I \in \mathcal{B}_1} |f_I(y)|\,dy \\
&\lesssim \frac{1}{N}\sum_{I \in \mathcal{B}_1} \sup_{y \in I}|x_0-y|^{-2} \|f_I\|_1 & \mbox{By H\"{o}lder}
\end{align*}
for any fixed $x \in \mathbb{T}\setminus \cup I^*$.  For each $I$, $|x-y|$ with $y \in I$ is at least $\frac{1}{16}|I|$.  Of course, $|x-y|$ will be greater than the distance between $x$ and $I$ for any $y \in I$. Then $|x-y|\gtrsim \max(|I|, \mbox{dist}(x,I)) $ which implies $|x-y| \gtrsim |I|+\mbox{dist}(x,I) $.  Thus by ordering $\mathcal{B}_1=\{I_j\}$ by proximity to $x$ (considering only those intervals to the right of $x$ without loss of generality) we have
\begin{align*}
&\frac{1}{N}\sum_{I \in \mathcal{B}_1} \sup_{y \in I}|x-y|^{-2} \|f_I\|_1 \\
&\lesssim \frac{1}{N} \sum_{j =1}^{|\mathcal{B}_1|} (|I_j|+\mbox{dist}(x,I_j))^{-2}\|f_{I_j}\|_1 \\
&\lesssim \frac{\lambda}{N} \sum_{j=1}^{|\mathcal{B}_1|} \frac{|I_j|}{(|I_j|+\mbox{dist}(x,I_j))^{2}} & \mbox{By the C-Z decomposition.}
\end{align*}
Let $\phi_x(y):=\min(N^{2}, |x-y|^{-2})$.  Then, for all $y \in I_j$,
\begin{align*}
\frac{1}{(|I_j|+\mbox{dist}(x,I_j))^{2}} &\lesssim \frac{1}{|x-y|^{2}}   \lesssim \phi_x(y) \\
\Rightarrow  \frac{1}{(|I_j|+\mbox{dist}(x,I_j))^{2}} &\lesssim \inf_{y \in I_j}\phi_x(y).
\end{align*}
Recall that the $I_j$ are pairwise disjoint. Therefore, the sum $\sum_{j=1}^{|\mathcal{B}_1|} \frac{|I_j|}{(|I_j|+\mbox{dist}(x,I_j))^{2}}$ is bounded by a lower Riemann sum of $\phi_x(y)$.  Thus,
\begin{align*}
\frac{1}{N} \sum_{j=1}^{|\mathcal{B}_1|} \lambda\frac{|I_j|}{(|I_j|+\mbox{dist}(x,I_j))^{2}} &\leq \frac{\lambda}{N} \|\phi_x\|_1 \\
&\lesssim  \frac{\lambda}{N} N =\lambda.
\end{align*}
In conclusion, the claim holds and $\|\tilde{b}\|_{\infty} \lesssim \lambda$, which implies
\begin{align*}
\int_{\mathbb{T}\setminus E} (K_N*|\tilde{b}|^2)(x) \,dx &\lesssim \|K_N\|_1 \int_{\mathbb{T}} \sum_{I \in \mathcal{B}_1} \chi_{\mathbb{T}\setminus I^*}(x)|f_I^{(N)}(x)|\left|\sum_{I \in \mathcal{B}_1} \chi_{\mathbb{T}\setminus I^*}f^{(N)}_I(x)\right|\,dx  \\
&\lesssim \|K_N\|_1 \lambda \int_{\mathbb{T}} \sum_{I \in \mathcal{B}_1} \chi_{\mathbb{T}\setminus I^*}(x)|f_I^{(N)}(x)|\,dx \hspace{.5cm} (\mbox{from } \|\tilde{b}\|_{\infty} \lesssim \lambda)\\
&\lesssim \lambda   \sum_{I \in \mathcal{B}_1}\int_{\mathbb{T}}|f_I^{(N)}(x)|\,dx \lesssim \lambda \sum_{I \in \mathcal{B}_1} \|f_I\|_1 \\
&\lesssim \lambda
\end{align*}
This gives us
\begin{align*}
&\int_{\mathbb{T} \setminus E} \int_{\mathbb{T}}K_N(x)|b_1^{(N)}(\theta-x)|^2\,dx d\theta \\
& \lesssim \int_{\mathbb{T} \setminus E} \int_{\mathbb{T}}K_N(x)|b^*(\theta-x)|^2\,dx d\theta +\int_{\mathbb{T} \setminus E} \int_{\mathbb{T}}K_N(x)|\tilde{b}(\theta-x)|^2\,dx d\theta \\
&\lesssim \lambda+\lambda \\
&\lesssim \lambda
\end{align*}

Again we include the following bound for $b^{(N)}_2$ for completeness.  Assume that $\frac{1}{N}=2^j$ for some $j \in \mathbb{Z}$ and partition $\mathbb{T}$ into $N$ intervals of length $\frac{1}{N}$.  We can do this while giving away a factor of two in the final bound.  Then since $I \in \mathcal{B}_2$ are dyadic, each $I$ is contained in a length $\frac{1}{N}$ interval.  Then let
\begin{align*}
b_2^{(N)}&=\sum_{I \in \mathcal{B}_2}f^{(N)}_I= \sum_{J} \sum_{I \in \mathcal{B}_2 \atop I \subset J}f_I^{(N)} 
\end{align*}
where the $J$ intervals come from the $1/N$ partition. Then, for the remainder of this proof, we let 
\begin{align*}
f_J:= \chi_J b_2= \sum_{I \in \mathcal{B}_2 \atop I \subset J}f_I.
\end{align*}
 We first note again that $|V_N(\theta)| \lesssim \frac{1}{N} \min(|\theta|^{-2}, N^2)$ and thus 
\begin{align*}
|f^{(N)}_J(y)| &\lesssim \int_{\mathbb{T}}\frac{1}{N}\min(|y-x|^{-2},N^2)|f_J(x)| \,dx \\
& \lesssim \frac{1}{N}\|f_J\|_1  \min(\sup_{x \in J}|y-x|^{-2},N^2)\\
& \lesssim \frac{1}{N}\|f_J\|_1 \frac{1}{\mbox{dist}(y,J)^2+\frac{1}{N^2}}
\end{align*}
Then
\begin{align*}
&\left| \sum_J f^{(N)}_J(y) \right|^2 \lesssim  \left|\sum_J f^{(N)}_J(y)\right|\left( \sum_J \left|f^{(N)}_J(y)\right| \right)\\
& \lesssim \left|b^{(N)}_2(y)\right| \sum_J \|f_J\|_1 \frac{1/N}{\mbox{dist}(y, J)^2+\frac{1}{N^2}} \\
&\lesssim \left|b^{(N)}_2(y)\right|(\max_J \|f_J\|_1) \sum_J  \frac{1/N}{\mbox{dist}(y, J)^2+\frac{1}{N^2}}
\end{align*}
We note again that by the Calder\'{o}n-Zygmund decomposition for any $J$,
\begin{align*}
&\|f_J\|_1=\frac{1}{N}\frac{\|f_J\|_1}{|J|} \leq \frac{1}{N}\frac{\sum_{I \in \mathcal{B}_2 \atop I \subset J} \|f_I\|_1}{|J|} \\
& \lesssim  \frac{1}{N}\frac{\lambda \sum_{I \in \mathcal{B}_2 \atop I \subset J} |I|}{|J|} \leq \frac{1}{N}\frac{\lambda |J|}{|J|}\\
&\lesssim \lambda|J| =\frac{\lambda}{N}
\end{align*}
Therefore,
\begin{align*}
& \left|b^{(N)}_2(y)\right|(\max_J \|f_J\|_1)\sum_J \frac{1/N}{\mbox{dist}(y, J)^2+\frac{1}{N^2}} \\
&\lesssim \lambda\left|b^{(N)}_2(y)\right| \sum_J  \frac{1/N^2}{\mbox{dist}(y, J)^2+\frac{1}{N^2}}
\end{align*}
If we fix $y$, then for each $J$, there is a nonnegative constant $C_y \leq \frac{1}{N}$ that doesn't depend on $J$ and a positive integer $m_J \in [1,N]$ unique to each $J$ such that $\mbox{dist}(y,J) = C_y+m_J\frac{1}{N} $
\begin{align*} 
&\sum_J  \frac{1/N^2}{\mbox{dist}(y, J)^2+\frac{1}{N^2}} \lesssim \sum_J  \frac{1/N^2}{(C_y+m_J\frac{1}{N})^2+\frac{1}{N^2}}\lesssim \sum_J  \frac{1/N^2}{m^2_J\frac{1}{N^2}+\frac{1}{N^2}} \\
&\lesssim \sum_{i=1}^N \frac{1}{i^2} \lesssim 1 
\end{align*}
This gives us the following inequality for all $y \in \mathbb{T}$:
\begin{align*}
\left| \sum_J f^{(N)}_J(y) \right|^2 \lesssim \lambda\left| \sum_J f^{(N)}_J(y) \right| =\lambda\left|b^{(N)}_2(y)\right|.
\end{align*}
Thus 
\begin{align*}
&\int_{\mathbb{T} \setminus E} \int_{\mathbb{T}} K_N(x) \left| \sum_J f^{(N)}_J(y-x) \right|^2 \,dx dy \\
&\lesssim \lambda \int_{\mathbb{T}} \int_{\mathbb{T}} K_N(x) \left|b^{(N)}_2(y-x)\right| \,dx dy= \lambda \|K_N * |b_2^{(N)}|\|_1 \\
&\lesssim \lambda  \|K_N\|_1  \|b_2^{(N)}\|_1 \hspace{.3cm} \mbox{By Young's Inequality} \\
&\lesssim \lambda \|b_2\|_1\lesssim \lambda\|f\|_1\lesssim \lambda \end{align*}
as desired.  The three main estimates we have obtained combine to give us
\begin{align*}
&\frac{1}{N} \sum_{n=1}^N \int_{\mathbb{T} \setminus E} |S_{n}f^{(N)}(\theta)|^2\,d\theta \\
&\lesssim \int_{\mathbb{T} \setminus E} \int_{\mathbb{T}}K_N(x)|f^{(N)}(\theta-x)|^2\,dx d\theta \\
&\lesssim \int_{\mathbb{T} \setminus E} \int_{\mathbb{T}}K_N(x)|g^{(N)}(\theta-x)|^2\,dx d\theta +\int_{\mathbb{T} \setminus E} \int_{\mathbb{T}}K_N(x)|b_1^{(N)}(\theta-x)|^2\,dx d\theta \\
& \hspace{2cm}+\int_{\mathbb{T} \setminus E} \int_{\mathbb{T}}K_N(x)|b_2^{(N)}(\theta-x)|^2\,dx d\theta \\
&\lesssim \lambda 
\end{align*}
Through scaling we get with $\|f\|_1$ and $\lambda$ we can assume $|E|\leq \frac{1}{\lambda}$ and 
\begin{align*}
&\frac{1}{N} \sum_{n=1}^N \int_{\mathbb{T} \setminus E} |S_{n}f^{(N)}(\theta)|^2\,d\theta \lesssim \lambda\|f\|_1^2.
\end{align*}
This is precisely the bound we sought out.
\end{proof}
Before we move on and extend the result to the real line, let us revisit a question posed before Lemma \ref{lemma}.  Specifically, we asked for which $s>1$ does Proposition \ref{2nd reduction} still hold when $Q_N(y)=\frac{1}{N^{s-1}} \min(N^s, |y|^{-s})$ replaces $B_N$.  The answer to this question is a direct corollary to the reduction of Theorem \ref{first thing} to the estimate
\begin{align*}
\int_{\mathbb{T} \setminus E} \int_{\mathbb{T}}K_N(x)|f^{(N)}(\theta-x)|^2\,dx d\theta=\int_{\mathbb{T} \setminus E} K_N* |f^{(N)}|^2 \lesssim \lambda.
\end{align*}
\begin{corollary} \label{decay corollary}
Let $N \in \mathbb{N}\setminus \{0\}$, $\lambda>0$, and $s\geq2$.  Then for any $f \in L^1(\mathbb{T})$ such that $\mbox{supp}(\hat{f}) \subset [-N,N]$, there exists $E \subset \mathbb{T}$, with $|E| \leq \frac{1}{\lambda}$ such that 
\begin{align*}
\int_{\mathbb{T} \setminus E} (Q_{N}*|f|^2)(\theta)\,d\theta \leq C \lambda \|f\|_1^2  
\end{align*}
where  $Q_N(x)=\frac{1}{N^{s-1}} \min(N^s, |x|^{-s})$.
\end{corollary}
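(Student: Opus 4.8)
The plan is to repeat the argument from the proof of Theorem~\ref{first thing} essentially verbatim, with $Q_N$ in place of the kernel $K_N(x)=\frac1N\min(N^2,|x|^{-2})$ used there. Recall that the whole body of that proof, after the initial Fourier-series reduction, was devoted precisely to establishing $\int_{\mathbb{T}\setminus E}(K_N*|f^{(N)}|^2)(\theta)\,d\theta\lesssim\lambda$ for $f$ with $\widehat f$ supported in $[-N,N]$ and $\|f\|_1=1$, where $f^{(N)}=V_N*f$ --- which, under the hypothesis of the corollary, is just $f$ itself. So I would normalize $\|f\|_1=1$, perform a Calder\'{o}n--Zygmund decomposition at a small constant multiple of $\lambda$, write $f=g+b$ and $b=b_1+b_2$ according to whether the bad interval has length greater than $1/N$ or at most $1/N$, set $E=\bigcup_{I\in\mathcal{B}}5\cdot I$, split $V_N*b_1=b^*+\tilde{b}$ exactly as before, and use $|f|^2\lesssim|g^{(N)}|^2+|b_1^{(N)}|^2+|b_2^{(N)}|^2$. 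The only general fact needed about the new kernel is that $\|Q_N\|_1\lesssim_s 1$ uniformly in $N$ for every $s>1$; this is the one-line estimate obtained by splitting the integral at $|x|=1/N$ (the part $|x|\le1/N$ contributes $\frac1{N^{s-1}}N^s\cdot\frac2N=2$, the part $|x|>1/N$ contributes $\lesssim_s\frac1{N^{s-1}}N^{s-1}=1$). Granting this, the estimates for $g^{(N)}$, for $\tilde{b}$, and for $b_2^{(N)}$ carry over with no change whatsoever: in each of those three cases the kernel enters only through a single use of Young's inequality $\|Q_N*h\|_1\le\|Q_N\|_1\|h\|_1$, while the accompanying pointwise bounds $\|g^{(N)}\|_\infty\lesssim\lambda$, $\|\tilde{b}\|_\infty\lesssim\lambda$, and $\|b_2^{(N)}\|_\infty\lesssim\lambda$ rely only on the fixed $\lesssim\frac1N\min(N^2,|y|^{-2})$ decay of the de la Vall\'{e}e Poussin kernel $V_N$, and not on the exponent $s$.

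The one place where $s$ genuinely enters --- and where the extra decay of $Q_N$ gets consumed --- is the bound for $b^*=\sum_{I\in\mathcal{B}_1}\chi_{I^*}(V_N*f_I)$. Following the proof of Theorem~\ref{first thing}, one decomposes $\bigcup_{I\in\mathcal{B}_1}I^*$ into connected components $\mathcal{C}_i$, takes $J_i\in\mathcal{B}_1$ to be the largest interval with $J_i^*\subset\mathcal{C}_i$, applies Lemma~\ref{lemma} (after splitting $\mathcal{B}_1$ into three pairwise nonadjacent subfamilies) to conclude $\mathcal{C}_i\subset4\cdot J_i^*=\frac92\cdot J_i$, and then replaces $\mathbb{T}\setminus E$ by $\mathbb{T}\setminus5\cdot J_i$. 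On that domain the constraints $\theta-x\in\frac92 J_i$ and $\theta\notin5J_i$ force $|x|\ge\frac14|J_i|>\frac1{4N}$, so $Q_N(x)\lesssim N^{1-s}|x|^{-s}$; Bernstein's inequality gives $\sum_{I^*\subset\mathcal{C}_i}|f_I^{(N)}|\le N\sum_{I^*\subset\mathcal{C}_i}\|f_I\|_1$, and after H\"{o}lder together with the radial integration $\int_{\mathbb{T}\setminus5J_i}|\theta-x|^{-s}\,d\theta\lesssim|J_i|^{1-s}$ (valid for $s>1$), one is left with a per-component contribution $\lesssim N^{2-s}\big(\sum_{I^*\subset\mathcal{C}_i}\|f_I\|_1\big)^2|J_i|^{1-s}$. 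Invoking the Calder\'{o}n--Zygmund packing bound $\sum_{I^*\subset\mathcal{C}_i}\|f_I\|_1\lesssim\lambda\sum_{I^*\subset\mathcal{C}_i}|I|\lesssim\lambda|J_i|$ (the last step by Lemma~\ref{lemma}), this is $\lesssim\lambda(N|J_i|)^{2-s}\sum_{I^*\subset\mathcal{C}_i}\|f_I\|_1$. Since $J_i\in\mathcal{B}_1$ means $|J_i|>1/N$, the factor $(N|J_i|)^{2-s}$ is $\le1$ exactly when $s\ge2$ --- and this is the only use of the hypothesis $s\ge2$. Summing over $i$ then yields $\lesssim\lambda\sum_{I\in\mathcal{B}_1}\|f_I\|_1\lesssim\lambda$, so that the support observation $\mbox{supp}(B_N*|b^*|^2)\subset E$ of Proposition~\ref{2nd reduction} gets replaced by exactly this component analysis.

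Combining the three contributions gives $\int_{\mathbb{T}\setminus E}(Q_N*|f|^2)(\theta)\,d\theta\lesssim\lambda$; undoing the normalization produces the factor $\|f\|_1^2$ on the right, and performing the Calder\'{o}n--Zygmund decomposition at a suitable height $c\lambda$ (with $c$ an absolute constant) ensures $|E|\le1/\lambda$. The main point to watch --- really the only nontrivial point --- is the exponent bookkeeping in the $b^*$ estimate: one must check that the three stray factors (the $N^{1-s}$ lost in $Q_N\lesssim N^{1-s}|x|^{-s}$, the $N$ gained from Bernstein, and the $|J_i|^{2-s}$ produced jointly by the radial integral and the Calder\'{o}n--Zygmund packing) assemble into precisely $(N|J_i|)^{2-s}$, which is harmless when $s\ge2$ but diverges as $N\to\infty$ for $1<s<2$. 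Together with the unboundedness of $\|Q_N\|_1$ for $s\le1$ noted before Lemma~\ref{lemma}, this shows $s\ge2$ to be the natural range for the method.
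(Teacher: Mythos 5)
Your proposal is correct and follows essentially the same route as the paper's own proof: handle $g^{(N)}$, $\tilde{b}$, and $b_2^{(N)}$ via Young's inequality together with $\|Q_N\|_1\lesssim 1$ and the $L^\infty\lesssim\lambda$ bounds inherited from Theorem~\ref{first thing}, and treat $b^*$ by the component decomposition, Lemma~\ref{lemma}, Bernstein, and the radial integral, arriving at the factor $(N|J_i|)^{2-s}$ whose boundedness is exactly where $s\ge2$ is used. The exponent bookkeeping you describe coincides with the paper's computation, so no further changes are needed.
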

\begin{proof}
Let $\|f\|_1=1$, $\lambda >1$ and $E:= \cup_I\; 5\cdot I$ defined as in Theorem \ref{first thing}.  For the moment let us assume $s>1$. We have already seen from Proposition \ref{2nd reduction} and Theorem \ref{first thing} that for $f^{(N)}=g^{(N)}+b^{(N)}_1+b_2^{(N)}$ we have
\begin{align*}
\int_{\mathbb{T} \setminus E} Q_{N}*|g^{(N)}|^2+\int_{\mathbb{T} \setminus E} Q_{N}*|b_2^{(N)}|^2 \lesssim \|Q_N\|_1 (\|g^{(N)}\|_2^2+ \|b_2^{(N)}\|_2^2) \lesssim \lambda \|Q_N\|_1.
\end{align*}
Since $\|Q_N\|_1 \lesssim 1$ uniformly in $N \in \mathbb{N}$ and $s>1$ we do not need to say more about $b_2^{(N)}$ and $g^{(N)}$.  For $b^{(N)}_1$, let $I^*$ be defined as in Theorem \ref{first thing}, $I^*=\frac{9}{8} \cdot I$. Then we have
\begin{align*}
&\int_{\mathbb{T} \setminus E} Q_{N}*|b_1^{(N)}|^2\lesssim \int_{\mathbb{T} \setminus E} Q_{N}*|b^*|^2+\int_{\mathbb{T} \setminus E} Q_{N}*|\tilde{b}|^2 \\
&\lesssim \int_{\mathbb{T} \setminus E} Q_{N}*|b^*|^2+ \|Q_N\|_1 \|\tilde{b}\|_2^2 \lesssim \int_{\mathbb{T} \setminus E} Q_{N}*|b^*|^2 + \lambda \|Q_N\|_1
\end{align*} 
Thus, the decay of $Q_N$ is all that is important, particularly for the bound
\begin{align*}
\int_{\mathbb{T} \setminus E} (Q_N*|b^*|^2)(x)\,dx = \sum_{i} \int_{\mathbb{T} \setminus E} \left(Q_N*\left|\sum_{I^* \subset \mathcal{C}_i}\chi_{I^*}f_I^{(N)}\right|^2\right)(x)\,dx
\end{align*}
For each individual $i$, we have
\begin{align*}
&\int_{\mathbb{T} \setminus E} \left(Q_N*\left|\sum_{I^* \subset \mathcal{C}_i}\chi_{I^*}f_I^{(N)}\right|^2\right)(x)\,dx \\
&=\int_{\mathbb{T} \setminus E} \int_{\mathbb{T}}Q_N(y)\left|\sum_{I^* \subset \mathcal{C}_i}\chi_{ I^*}(x-y) f^{(N)}_I(x-y)\right|^2\,dy dx
\end{align*}
By the argument from Theorem \ref{first thing},
\begin{align*}
&\int_{\mathbb{T} \setminus E} \int_{\mathbb{T}}Q_N(y)\left|\sum_{I^* \subset \mathcal{C}_i }\chi_{ I^*}(x-y) f^{(N)}_I(x-y)\right|^2\,dy dx\\
&\lesssim \frac{1}{N^{s-1}}\int_{\mathbb{T} \setminus c\cdot J_i} \int_{\mathbb{T}}|y|^{-s}\left(\chi_{4\cdot J^*_i}(x-y)\sum_{ I^* \subset \mathcal{C}_i } \left|f^{(N)}_I(x-y)\right|\right)^2\,dy dx \\
& \lesssim N^{2-s} \left(\sum_{I^* \subset \mathcal{C}_i }\|f_I\|_1\right)\int_{\mathbb{T} \setminus c\cdot J_i} \int_{\mathbb{T}} |y|^{-s}\chi_{4\cdot J^*_i}(x-y)\left(\sum_{I^* \subset \mathcal{C}_i } \left|f^{(N)}_I(x-y)\right|\right)\,dy dx \\
 &\lesssim N^{2-s}\left(\sum_{I^* \subset \mathcal{C}_i }\|f_I\|_1\right)\int_{\mathbb{T}}\chi_{4\cdot J^*_i}(y)\sum_{I^* \subset \mathcal{C}_i } |f^{(N)}_I(y)|\int_{\mathbb{T} \setminus c\cdot J_i}  |x-y|^{-s}\, dx dy 
\end{align*}
Then
\begin{align*}
&N^{2-s}\left(\sum_{I^* \subset \mathcal{C}_i }\|f_I\|_1\right)\int_{\mathbb{T}}\chi_{4\cdot J^*_i}(y)\sum_{I^* \subset \mathcal{C}_i } |f^{(N)}_I(y)|\int_{\mathbb{T} \setminus c\cdot J_i}  |x-y|^{-s}\, dx dy  \\
&\lesssim N^{2-s}\left(\sum_{I^* \subset \mathcal{C}_i }\|f_I\|_1\right)\left(\sum_{I^* \subset \mathcal{C}_i }\|f^{(N)}_I\|_1\right) \int_{\mathbb{T}} ||J_i| + x|^{-s} \,dx \\ 
&\lesssim \lambda \left(\sum_{I^* \subset \mathcal{C}_i }\|f_I\|_1\right) N^{2-s}|J_i| |J_i|^{-(s-1)} =\lambda \left(\sum_{I^* \subset \mathcal{C}_i }\|f_I\|_1\right) (N|J_i|)^{2-s}
\end{align*}
Here we see the importance of $s \geq 2$.  When $s\geq 2$ we can use the following inequality: $(N|J_i|)^{-1} < 1$ because $|J_i|>\frac{1}{N}$.   Then
\begin{align*}
&\int_{\mathbb{T} \setminus E} \left(Q_N*\left|\sum_{I^* \subset \mathcal{C}_i}\chi_{I^*}f_I^{(N)}\right|^2\right)(x)\,dx  \lesssim \lambda  \left(\sum_{I^* \subset \mathcal{C}_i }\|f_I\|_1\right)
\end{align*}
which is exactly what we need to obtain the final result. Otherwise, $(N|J_i|)^{2-s}$ grows as $N$ grows.
\end{proof}
The previous proof shows that any function $f \in L^1$ with $\mbox{supp}(\hat{f}) \subset [-N, N]$ with $\mathcal{B}_1 \neq \emptyset$ is a counterexample for $1<s<2$.  We now consider the analogue of Theorem \ref{first thing} on the line.
\begin{proposition} \label{real line}
Let  $\lambda>0$.  Then for any $f \in L^1(\mathbb{R})$, there exists $E \subset \mathbb{R}$, with $|E| \leq \frac{1}{\lambda}$ such that
\begin{align*}
\sup_{T>0}\frac{1}{T} \int_0^T \int_{\mathbb{R} \setminus E} |S_t f(x)|^2 dx dt \lesssim \lambda \|f\|_1^2\end{align*}
 where $S_tf=D_t * f$ given $D_t(x)=\int_{-t}^t e^{-2\pi i s x}\, ds$ .
\end{proposition}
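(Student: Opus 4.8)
The plan is to run the argument of Theorem~\ref{first thing} essentially verbatim, with the $1/N$-partition and de la Vall\'ee Poussin kernel on $\mathbb{T}$ replaced by their continuous analogues on $\mathbb{R}$ and the discrete $n$-average replaced by the $t$-average. As in the periodic case, the first move is to dominate the average of $|S_tf|^2$ by a single convolution. Write $D_t(x)=\tfrac{\sin 2\pi t x}{\pi x}=\tfrac{e^{2\pi i t x}-e^{-2\pi i t x}}{2\pi i x}$ and split $D_t=K_{t,1}+K_{t,2}$ with $K_{t,1}=D_t\chi_{[|x|\le 1/T]}$, $K_{t,2}=D_t\chi_{[|x|>1/T]}$. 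For $K_{t,2}$, setting $G_x(y):=\chi_{[|y|>1/T]}(y)\tfrac{f(x-y)}{2\pi i y}$, one has
\begin{align*}
K_{t,2}*f(x)=\widehat{G_x}(-t)-\widehat{G_x}(t),
\end{align*}
so Plancherel on $\mathbb{R}$ gives $\int_0^T|K_{t,2}*f(x)|^2\,dt\lesssim\|\widehat{G_x}\|_2^2=\|G_x\|_2^2\lesssim\int_{|y|>1/T}|y|^{-2}|f(x-y)|^2\,dy$; for $K_{t,1}$ one uses $|K_{t,1}(x)|\le 2t\le 2T$ on $[|x|\le 1/T]$ together with Cauchy--Schwarz. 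Both contributions are dominated (uniformly in $t\in(0,T]$ for $K_{t,1}$, and after the $t$-integration for $K_{t,2}$) by $(K_T*|f|^2)(x)$ with $K_T(x)=\tfrac1T\min(T^2,|x|^{-2})$ and $\|K_T\|_1$ independent of $T$. Hence, for every $E\subset\mathbb{R}$,
\begin{align*}
\frac1T\int_0^T\int_{\mathbb{R}\setminus E}|S_tf(x)|^2\,dx\,dt\lesssim\int_{\mathbb{R}\setminus E}(K_T*|f|^2)(x)\,dx .
\end{align*}

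It then suffices to bound the right-hand side by $C\lambda$ for a suitable $E$ with $|E|\le 1/\lambda$, and this is where the proof of Theorem~\ref{first thing} is transcribed. Normalize $\|f\|_1=1$ (and assume $f\ge 0$, splitting into real/imaginary and positive/negative parts otherwise). Perform the dyadic Calder\'on--Zygmund decomposition of $f$ at height $5\lambda$ --- available for \emph{every} $\lambda>0$, since on $\mathbb{R}$ the average $|Q|^{-1}\int_Q|f|$ tends to $0$ along growing dyadic cubes $Q$ --- obtaining disjoint dyadic intervals $\mathcal{B}$, $f=g+b$ with $|g|\lesssim\lambda$, $b=\sum_{I\in\mathcal{B}}f_I$, $\lambda\lesssim|I|^{-1}\int_I|f|\lesssim\lambda$, and $\big|\bigcup_{I\in\mathcal{B}}I\big|\le 1/(5\lambda)$. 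Set $E:=\bigcup_{I\in\mathcal{B}}5\cdot I$, so that $|E|\le 1/\lambda$; crucially $E$ depends only on $f$ and $\lambda$, \emph{not} on $T$, which is what permits the final passage to the supremum. Let $V_T$ be the line analogue of the de la Vall\'ee Poussin kernel, $\widehat{V_T}=\psi(\cdot/T)$ for a fixed trapezoidal (or smooth) $\psi$ with $\psi\equiv1$ on $[-1,1]$ and $\operatorname{supp}\psi\subset[-2,2]$; then $\|V_T\|_1\lesssim1$ and $|V_T(y)|\lesssim\tfrac1T\min(T^2,|y|^{-2})$ uniformly in $T$, $\|V_T*h\|_\infty\lesssim T\|h\|_1$ (Bernstein), and $S_t(V_T*f)=S_tf$ for $0<t\le T$. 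The last identity, used before the kernel reduction above, lets us replace $f$ by $f^{(T)}:=V_T*f$ and reduces matters to bounding $\int_{\mathbb{R}\setminus E}K_T*|f^{(T)}|^2$, with $f^{(T)}=g^{(T)}+b^{(T)}$, $g^{(T)}=V_T*g$, $b^{(T)}=V_T*b$. Split $\mathcal{B}=\mathcal{B}_1\cup\mathcal{B}_2$ by whether $|I|>1/T$, $b=b_1+b_2$ accordingly, put $I^*:=\tfrac98\cdot I$, and $V_T*b_1=b^*+\tilde b$ with $b^*=\sum_{I\in\mathcal{B}_1}\chi_{I^*}(V_T*f_I)$, $\tilde b=\sum_{I\in\mathcal{B}_1}\chi_{\mathbb{R}\setminus I^*}(V_T*f_I)$, exactly as in Theorem~\ref{first thing}.

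From here the three estimates go through with no essential change. For $g^{(T)}$: $\|g^{(T)}\|_\infty\lesssim\lambda$, so $\int_{\mathbb{R}\setminus E}K_T*|g^{(T)}|^2\lesssim\lambda\|K_T\|_1\|g^{(T)}\|_1\lesssim\lambda$. For $b_2^{(T)}$: assuming (losing a constant) that $T$ is a power of $2$, group $\mathcal{B}_2$ by the dyadic $1/T$-intervals containing its members; the uncertainty bound $|f_J^{(T)}(y)|\lesssim\lambda|J|\cdot\tfrac{1/T}{\operatorname{dist}(y,J)^2+1/T^2}$ together with $\sum_{m\in\mathbb{Z}}(m^2+1)^{-1}\lesssim1$ --- the one place the partition on $\mathbb{R}$ is infinite rather than finite, and it only helps --- gives $\|b_2^{(T)}\|_\infty\lesssim\lambda$, hence $\int_{\mathbb{R}\setminus E}K_T*|b_2^{(T)}|^2\lesssim\lambda\|K_T\|_1\|b_2^{(T)}\|_1\lesssim\lambda$. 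For $\tilde b$: the ``lower Riemann sum of $\phi_x(y)=\min(T^2,|x-y|^{-2})$'' argument, now with $\|\phi_x\|_1\asymp T$, yields $\|\tilde b\|_\infty\lesssim\lambda$, hence $\int_{\mathbb{R}\setminus E}K_T*|\tilde b|^2\lesssim\|K_T\|_1\lambda\int_{\mathbb{R}}|\tilde b|\lesssim\lambda\sum_{I\in\mathcal{B}_1}\|f_I\|_1\lesssim\lambda$. Finally $b^*$, the heart of the matter: after splitting $\mathcal{B}_1$ into three non-adjacent subfamilies, decompose $\bigcup_{I\in\mathcal{B}_1}I^*$ into connected components $\mathcal{C}_i$, each a \emph{bounded} interval (since all $I\in\mathcal B_1$ have $|I|>1/T$ and $\sum_{I\in\mathcal B_1}|I|<\infty$), hence each containing only finitely many $I^*$; let $J_i$ be the largest $I\in\mathcal{B}_1$ with $J_i^*\subset\mathcal{C}_i$, invoke Lemma~\ref{lemma} to get $\mathcal{C}_i\subset 4\cdot J_i^*=\tfrac92\cdot J_i$ and $\sum_{I^*\subset\mathcal{C}_i}|I|\le|\mathcal{C}_i|\lesssim|J_i|$, then run the Bernstein--H\"older chain (with $\int_{\mathbb{R}}(|J_i|+|\theta|)^{-2}\,d\theta\lesssim|J_i|^{-1}$) to obtain $\int_{\mathbb{R}\setminus E}K_T*\big|\sum_{I^*\subset\mathcal{C}_i}\chi_{I^*}f_I^{(T)}\big|^2\lesssim\lambda\sum_{I^*\subset\mathcal{C}_i}\|f_I\|_1$; summing over $i$ gives $\lesssim\lambda\sum_{I\in\mathcal{B}_1}\|f_I\|_1\lesssim\lambda$. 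Adding the four pieces, rescaling in $\|f\|_1$ and $\lambda$, and taking the supremum over $T>0$ (legitimate since $E$ is $T$-independent) completes the proof.

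\textbf{Main obstacle.} The only genuinely new point is the reduction in the first step: realizing the \emph{continuous} average $\tfrac1T\int_0^T|S_tf(x)|^2\,dt$ as a piece of the $L^2(\mathbb{R}_t)$ Plancherel norm of $\widehat{G_x}$, which forces the splitting $\sin2\pi tx=\tfrac1{2i}(e^{2\pi itx}-e^{-2\pi itx})$, and, on the technical side, identifying a line de la Vall\'ee Poussin kernel $V_T$ enjoying $\|V_T\|_1\lesssim1$, $|V_T|\lesssim\tfrac1T\min(T^2,|\cdot|^{-2})$, and $\widehat{V_T}\equiv1$ on $[-T,T]$ simultaneously. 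Everything downstream is the proof of Theorem~\ref{first thing} with $\mathbb{T}\rightsquigarrow\mathbb{R}$, $N\rightsquigarrow T$, and finite sums $\rightsquigarrow$ integrals and convergent series; in particular Lemma~\ref{lemma} applies unchanged to dyadic intervals of $\mathbb{R}$ once one works inside a single (bounded, hence finite) component $\mathcal{C}_i$, so I expect no real difficulty beyond careful bookkeeping.
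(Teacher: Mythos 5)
Your proposal is correct and follows essentially the same route as the paper: the same Plancherel-in-$t$ reduction of $\tfrac1T\int_0^T|S_tf|^2\,dt$ to $K_T*|f^{(T)}|^2$ with $K_T(x)=T^{-1}\min(T^2,|x|^{-2})$, the same $T$-independent set $E=\bigcup_{I\in\mathcal{B}}5\cdot I$ from a Calder\'on--Zygmund decomposition, the same splitting $f^{(T)}=g^{(T)}+b^*+\tilde b+b_2^{(T)}$ via the line de la Vall\'ee Poussin kernel, and the same use of Lemma~\ref{lemma} on the components $\mathcal{C}_i$ (your remark that $\mathcal{B}_1$ is finite because $|I|>1/T$ matches the paper's). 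No substantive differences to flag.
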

\begin{proof}
Similar to the previous proposition, assume $\|f\|_1=1$, $f\geq 0$, fix $T$ and $\lambda>0$.  We also perform a Calder\'{o}n-Zygmund decomposition at height $\lambda$ and let $f= g+b_1+b_2$ as we do in Theorem \ref{first thing}, with $\mathcal{B}=\mathcal{B}_1\cup \mathcal{B}_2$ defined the same way.  We can define $E$ now as
\begin{align*}
E:= \bigcup_{I \in \mathcal{B}}5 \cdot I.
\end{align*}
 Note that
\begin{align*}
&D_t(x) = \int_{-t}^t e^{-2\pi i s x}\,ds =C\frac{e^{-2 \pi i t x}-e^{2 \pi i t x}}{x} \hspace{.3cm} \mbox{ for } |x|>T^{-1} \\
&\mbox{and } |D_t(x)| \leq T \hspace{.3cm} \mbox{for all } x.
\end{align*}
From this, we let $\chi_T(x)=\chi_{\{|x|>T^{-1}\}}$ and we assume as in Theorem \ref{first thing} that the Fourier support of $f$ is bounded. Therefore, we let $f=f^{(T)}:=f*V_T$ where $V_T$ is the real-line analogue to the de la Vall\'{e}e Poussin kernel.  Then we have
\begin{align} 
&\int_0^T \int_{\mathbb{R}\setminus E} |(\chi_T D_t*f^{(T)})(x)|^2\, dx dt \nonumber\\
& \lesssim \int_{\mathbb{R} \setminus E} \int_0^T \left| \int_{\mathbb{R}}f^{(T)}(x-y)\chi_T(y) \frac{e^{-2\pi i t y}}{y}\,dy \right|^2 +  \left| \int_{\mathbb{R}}f^{(T)}(x-y)\chi_T(y) \frac{e^{2\pi i t y}}{y}\,dy \right|^2\,dt dx  \nonumber \\
&\lesssim \int_{\mathbb{R} \setminus E} \int_{-T}^T \left|\hat{f_x}(t) \right|^2\,dt dx \lesssim \int_{\mathbb{R} \setminus E} \int_{\mathbb{R}} \left|\hat{f_x}(t) \right|^2\,dt dx \nonumber \\
&\lesssim \int_{\mathbb{R}\setminus E} (K^1 * |f^{(T)}|^2)(x)\,dx \label{realana0}
\end{align}
where $f_x(y)=f^{(T)}(x-y)\chi_T(y)\frac{1}{y}$ and $K^1(x) = \min(T^2, |x|^{-2})$.  Furthermore,
\begin{align} 
&\int_0^T \int_{\mathbb{R} \setminus E} |((1-\chi_T)D_t * f^{(T)})(x)|^2\,dx dt \nonumber\\
&\lesssim \int_0^T \int_{\mathbb{R} \setminus E} \int_{|y| \leq T^{-1}}T|f^{(T)}(x-y)|^2\,dy dx dt \nonumber\\
 &\lesssim \int_{\mathbb{R} \setminus E} (K^1 * |f^{(T)}|^2)(x) \,dx \label{realana}
 \end{align}
 Thus, by combining \eqref{realana0} and \eqref{realana} we conclude that
\begin{align*}
&\frac{1}{T} \int_0^T \int_{\mathbb{R} \setminus E} |(D_t*f)(x)|^2 \, dx dt \lesssim \frac{1}{T} \int_{\mathbb{R} \setminus E} (K^1 * |f^{(T)}|^2)(x) \,dx\\
&\lesssim \int_{\mathbb{R} \setminus E} (K_T * |f^{(T)}|^2)(x) \, dx 
\end{align*}
where $K_T(x):= T^{-1} \min(T^2, |x|^{-2})$ and $\|K_T\|_1 \lesssim 1$.  Then
\begin{align*}
\int_{\mathbb{R}\setminus E} &\int_{\mathbb{R}} |f^{(T)}(x-y)|^2K_T(y)\,dy dx \\
&\lesssim \int_{\mathbb{R}\setminus E} (|g^{(T)}|^2*K_T)(x)\,dx +\int_{\mathbb{R} \setminus E} (|b^{(T)}_1|^2*K_T)(x)\,dx +\int_{\mathbb{R}\setminus E} (|b^{(T)}_2|^2*K_T)(x)\,dx  
\end{align*}
and 
\begin{align*}
\int_{\mathbb{R}\setminus E} (|g^{(T)}|^2*K_T)(x)\,dx \lesssim \||g^{(T)}|^2*K_T\|_1\lesssim \|g^{(T)}\|_2^2\lesssim \lambda\|g\|_1 \lesssim \lambda 
\end{align*}
Now for the estimate for $b_1$, we perform a series of steps that are similar to what was done in Theorem \ref{first thing} with
\begin{align*}
b^{(T)}_1=\sum_{I \in \mathcal{B}_1} f^{(T)}_I =\sum_{I \in \mathcal{B}_1} \chi_{I^*}f^{(T)}_I + \sum_{I \in \mathcal{B}_1} \chi_{\mathbb{R}\setminus I^*}f^{(T)}_I=b^*+\tilde{b}
\end{align*}
with $I^*=\frac{9}{8}\cdot I$.  First considering $b^*$, we use Lemma \ref{lemma} to get a partition of $\cup_{I \in \mathcal{B}_1} I^*$ into intervals $\{C_i\}$ and
\begin{align*}
& \int_{\mathbb{R} \setminus E} |b^*(x-y)|^2*K_T(y)\,dy dx \\
& \hspace{1.5cm}=\sum_i \int_{\mathbb{R} \setminus E} \int_{\mathbb{R}}K_T(y)\left|\sum_{I^* \subset \mathcal{C}_i }\chi_{ I^*}(x-y) f^{(T)}_I(x-y)\right|^2\,dy dx 
\end{align*}
Then using the same argument as Theorem \ref{first thing} we have
\begin{align*}
&\sum_i \int_{\mathbb{R} \setminus E} \int_{\mathbb{R}}K_T(y)\left|\sum_{I^* \subset \mathcal{C}_i }\chi_{ I^*}(x-y) f^{(T)}_I(x-y)\right|^2\,dy dx  \\
&\lesssim \sum_i \int_{\mathbb{R} \setminus 5\cdot J_i} \int_{\mathbb{R}}K_T(y)\left(\chi_{4\cdot J_i}(x-y)\sum_{I^* \subset \mathcal{C}_i } \left|f^{(T)}_I(x-y)\right|\right)^2\,dy dx \\
&\lesssim \sum_i \sum_{I^* \subset \mathcal{C}_i} \|f_I\|_1 \int_{\mathbb{R} \setminus 5 \cdot J_i}\int_{\mathbb{R}}|y|^{-2}\left(\chi_{4\cdot J_i}(x-y)\sum_{I^* \subset \mathcal{C}_i } \left|f^{(T)}_I(x-y)\right|\right)\,dy dx \\
&\lesssim \sum_i \sum_{I^* \subset \mathcal{C}_i} \|f_I\|_1 \frac{\sum_{I^* \subset \mathcal{C}_i}\|f_I^{(T)}\|_1}{|J_i|} \lesssim \sum_i \lambda\sum_{I^* \subset \mathcal{C}_i}  \|f_I\|_1 \\
&\lesssim \lambda
\end{align*}

For this estimate, we do not have to worry about any convergence issues in the sum since $|\mathcal{B}_1| <\infty$. In Theorem \ref{first thing}, we showed that $\|\tilde{b}\|_{\infty} \lesssim \lambda$ which translates to $\mathbb{R}$ as well. However, for $b^{(T)}_2$ on the real line 
\begin{align*}
b^{(T)}_2= \sum_J K_T *(\chi_J b_2)= \sum_{J} f^{(T)}_J
\end{align*}
we have an infinite sum in $J$.  Using the same argument for $b_2$ as we did in Theorem \ref{first thing}, we obtain the following bound
\begin{align*}
\left\|b^{(T)}_2\right\|_{\infty} \lesssim \lambda \sum_{j=1}^{\infty} \frac{1}{j^2} \lesssim \lambda
\end{align*}
 Then, using this bound, we have
\begin{align*}
\int_{\mathbb{R}\setminus E} (|b^{(T)}_2|^2* K_T)(x)dx &\lesssim \lambda \int_{\mathbb{R}} (|b^{(T)}_2|*K_T)(x)dx \\
&\lesssim \lambda \|b^{(T)}\|_1\|K_T\|_1 \\
&\lesssim \lambda
\end{align*}
Therefore, we have the equivalent statement to Theorem \ref{first thing} for functions on $\mathbb{R}$.
\end{proof}
\subsection{Higher-Dimensional Results}
In order to demonstrate the process of extending these results to higher dimensions, we shall investigate the tensor case when for $f \in L^1(\mathbb{T}^d)$ we have
\begin{align*}
f(\theta_1, \theta_2, ..., \theta_d)= f_1(\theta_1)f_2(\theta_2)\cdots f_d(\theta_d)
\end{align*}
Drawing inspiration from Theorem \ref{first thing}, we will take the $\ell^2$ average over the first $N$ Fourier partial sums for each $f_i$:
\begin{align}\label{product}
\prod_{i=1}^d \left( \frac{1}{N} \sum_{n_i=1}^N|S_{n_i}f_i|^2 \right)
\end{align}
For $\overline{n}=(n_1, n_2, ..., n_d) \in \mathbb{N}^d$, let $\chi_{R_{\overline{n}}}$ be the indicator function on $\mathbb{Z}^d$ for the set $R_{\overline{n}}$ where
\begin{align*}
R_{\overline{n}}=\{ \overline{m} \in \mathbb{Z}^d : -n_j \leq m_j \leq n_j,\ 1\leq j \leq d\}
\end{align*}
Let $S_{\overline{n}}f$ be the Fourier multiplier operator on $f \in L^1(\mathbb{T}^d)$ defined by the Fourier multiplier $\chi_{R_{\overline{n}}}$. Then simplifying \eqref{product}, we get an average over $N^d$ partial Fourier sums corresponding to the lattice points $\overline{n}=(n_1, n_2, ..., n_d) \in (\mathbb{N}\setminus \{0\})^d$,
\begin{align*}
\prod_{i=1}^d \left( \frac{1}{N} \sum_{n_i=1}^N|S_{n_i}f_i|^2 \right)=\frac{1}{N^d} \sum_{\overline{n} \in R^+_N} |S_{\overline{n}}f|^2
\end{align*}
where $R^+_N=\{ \overline{n} \in (\mathbb{N} \setminus \{0\})^d : \|\overline{n}\|_{\infty} \leq N\}$.  Then, we execute a Calder\'{o}n-Zygmund decomposition at height $\lambda^{1/d}$ for each $f_i$.  Theorem \ref{first thing} informs us that there exists $E_i \subset \mathbb{T}$ such that $|E_i| \leq \frac{1}{\lambda^{1/d}}$ and 
\begin{align*}
\sup_{N \geq 1} \prod_{i=1}^d \left(  \int_{\mathbb{T} \setminus E_i}\frac{1}{N} \sum_{n_i=1}^N|S_{n_i}f_i|^2 \right) \lesssim \prod_{i=1}^d  \lambda^{1/d}\|f_i\|^2_1=\lambda \|f\|^2_{L^1(\mathbb{T}^d)}
\end{align*}
Define $E':= \cup_i \{ x \in \mathbb{T}^d : x_i \in E_i \}$. Then $\mathscr{H}^d(E') \leq \frac{1}{\lambda^{1/d}}$ and
\begin{align*}
\sup_{N\geq 1} \frac{1}{N^d} \sum_{\overline{n} \in R^+_N}\int_{\mathbb{T}^d \setminus E'}  |S_{\overline{n}}f|^2&=\sup_{N \geq 1}  \prod_{i=1}^d \left( \int_{\mathbb{T} \setminus E_i}\frac{1}{N} \sum_{n_i=1}^N|S_{n_i}f_i|^2 \right) \\
&\lesssim \lambda \|f\|^2_{L^1(\mathbb{T}^d)}
\end{align*}
The estimate $\mathscr{H}^d(E') \leq \lambda^{-1/d}$ is a step back from what we get in Theorem \ref{first thing}. This should be expected because the method by which we defined $E'$ was not as precise as the method for defining $E$ in Theorem \ref{first thing}. It is possible that there exists a subset of positive measure in $E'$ such that for each point in the subset one has $|f_i|>\lambda^{1/d}$ for at least one $j$, but $|f|\leq \lambda$. This implies that $E'$ is not the most efficient choice for the set we will remove from $\mathbb{T}^d$. Fortunately, we can take the $d$-dimensional Calder\'{o}n-Zygmund decomposition at height $\lambda$ for $f$ and still use the preceding ideas from Theorem \ref{first thing}.  The end result is that we can find an $E \subset \mathbb{T}^d$ with $\mathscr{H}^d(E) \leq \frac{1}{\lambda}$ such that the desired result holds.
\begin{proposition} \label{high d}
Let  $\lambda>0$.  Then for any $f \in L^1(\mathbb{T}^d)$, there exists $E \subset \mathbb{T}^d$, with $|E| \leq \frac{1}{\lambda}$ such that
\begin{align*}
\sup_{N \geq 1}\frac{1}{N^d} \sum_{\overline{n} \in R^+_N} \int_{\mathbb{T}^d \setminus E} |S_{\overline{n}}f(\theta)|^2 \,d\theta \leq C_d \lambda \|f\|_1^2
\end{align*}
where $R^+_N=\{ \overline{n} \in (\mathbb{N} \setminus \{0\})^d : \|\overline{n}\|_{\infty} \leq N\}$, $C_d$ is a constant depending only on the dimension $d$, and $S_{\overline{n}}f=D_{\overline{n}} * f$ given $D_{\overline{n}}(x):=\sum_{-n_j\leq m_j\leq n_j}e^{2\pi i m \cdot x}=\prod_{j=1}^d D_{n_j}(x_j)$.
\end{proposition}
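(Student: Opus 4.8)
The plan is to follow the proof of Theorem~\ref{first thing}, replacing intervals by dyadic cubes, the kernel $K_N$ by its $d$-fold tensor analogue, and Lemma~\ref{lemma} by Lemma~\ref{lemma2}. First I would reduce to an estimate free of the averages. Writing $D_{\overline{n}}(x)=\prod_{j=1}^d D_{n_j}(x_j)$ and $D_{n_j}=K_{n_j,1}+K_{n_j,2}$ with $K_{n_j,1}=D_{n_j}\chi_{\{|x_j|\le 1/N\}}$ (so $|K_{n_j,1}|\lesssim N$) and $K_{n_j,2}(x_j)=\frac{\chi_{\{|x_j|>1/N\}}}{e(x_j)-1}\bigl(e((n_j+1)x_j)-e(-n_jx_j)\bigr)$, one expands the product into $2^d$ groups of terms indexed by $A\subseteq\{1,\dots,d\}$; in the factors with $j\in A$ one splits off the two exponentials of $K_{n_j,2}$ and recognizes a partial Fourier transform in $x_j$, so Plancherel in those variables absorbs $\frac1{N^{|A|}}\prod_{j\in A}\sum_{n_j}$ and yields the decay $\prod_{j\in A}\min(N^2,|x_j|^{-2})$, while in the factors with $j\notin A$ one uses $|K_{n_j,1}|\lesssim N$ and Cauchy--Schwarz over $\prod_{j\notin A}[-1/N,1/N]$. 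After regularizing, i.e.\ replacing $f$ by $f^{(N)}=V_N^{\otimes d}*f$ with $V_N^{\otimes d}(x)=\prod_j V_N(x_j)$ (legitimate since $S_{\overline{n}}f=S_{\overline{n}}f^{(N)}$ for $\overline{n}\in R_N^+$ and $\|V_N^{\otimes d}\|_1\lesssim_d1$), this gives
\begin{gather*}
\frac1{N^d}\sum_{\overline{n}\in R_N^+}|S_{\overline{n}}f(\theta)|^2\ \lesssim_d\ (\mathbf{K}_N*|f^{(N)}|^2)(\theta),\\
\mathbf{K}_N(x):=\prod_{j=1}^d K_N(x_j),\qquad K_N(t)=\tfrac1N\min(N^2,|t|^{-2}),
\end{gather*}
with $\|\mathbf{K}_N\|_1\lesssim_d1$ uniformly in $N$. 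Thus it suffices to produce $E$ with $|E|\le1/\lambda$ and $\int_{\mathbb{T}^d\setminus E}\mathbf{K}_N*|f^{(N)}|^2\lesssim_d\lambda\|f\|_1^2$.

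Assuming $\|f\|_1=1$, I would next run the $d$-dimensional Calder\'{o}n--Zygmund decomposition of $f$ at a suitable height $\sim\lambda$, obtaining a disjoint family $\mathcal{B}$ of dyadic cubes with $\frac1{|Q|}\int_Q|f|\sim\lambda$, $|f|\le\lambda$ off $\bigcup_{Q\in\mathcal{B}}Q$, and $f=g+b$, $b=\sum_{Q\in\mathcal{B}}\chi_Q f=:\sum_Q f_Q$, $\|g\|_\infty\lesssim\lambda$; I split $\mathcal{B}=\mathcal{B}_1\cup\mathcal{B}_2$ according to whether the side length exceeds $1/N$, set $b=b_1+b_2$ accordingly, and put $E=\bigcup_{Q\in\mathcal{B}}5\cdot Q$ (choosing the C--Z height so that $|E|\le1/\lambda$). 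Splitting $\mathcal{B}_1$ into a bounded number of subfamilies I may assume its cubes pairwise nonadjacent, so that Lemma~\ref{lemma2} applies. The $g$ and $b_2$ contributions are transcriptions of the one-variable computations in Theorem~\ref{first thing}: $\|g^{(N)}\|_\infty\lesssim_d\lambda$ gives $\int_{\mathbb{T}^d\setminus E}\mathbf{K}_N*|g^{(N)}|^2\lesssim_d\lambda\|\mathbf{K}_N\|_1\|g^{(N)}\|_1\lesssim_d\lambda$, and writing $b_2^{(N)}=\sum_J f_J^{(N)}$ over the side-$1/N$ cubes $J$ ($f_J=\chi_J b_2$), the pointwise bound $|f_J^{(N)}(y)|\lesssim\|f_J\|_1\prod_j\frac{1/N}{\operatorname{dist}(y_j,J^{(j)})^2+1/N^2}$, the C--Z bound $\|f_J\|_1\lesssim\lambda|J|$, and the factorization of $\sum_J$ over coordinates (each coordinate sum $\lesssim1$ as in Theorem~\ref{first thing}) give $\|b_2^{(N)}\|_\infty\lesssim_d\lambda$ and hence $\int_{\mathbb{T}^d\setminus E}\mathbf{K}_N*|b_2^{(N)}|^2\lesssim_d\lambda$.

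It remains to handle $b_1$. With $Q^*=\frac98\cdot Q$, write $V_N^{\otimes d}*b_1=b^*+\tilde b$ where $b^*=\sum_{Q\in\mathcal{B}_1}\chi_{Q^*}f_Q^{(N)}$ and $\tilde b=\sum_{Q\in\mathcal{B}_1}\chi_{\mathbb{T}^d\setminus Q^*}f_Q^{(N)}$. The term $\tilde b$ is again routine: for $x\notin\bigcup Q^*$ each $(|V_N^{\otimes d}|*|f_Q|)(x)$ is comparable to $\|f_Q\|_1\inf_{y\in Q}|V_N^{\otimes d}(x-y)|$, and comparing the sum over $\mathcal{B}_1$ with a lower Riemann sum of $\prod_j\min(N^2,|x_j-y_j|^{-2})$ coordinate by coordinate (using $\|f_Q\|_1\lesssim\lambda|Q|$) yields $\|\tilde b\|_\infty\lesssim_d\lambda$, whence $\int_{\mathbb{T}^d\setminus E}\mathbf{K}_N*|\tilde b|^2\le\|\mathbf{K}_N\|_1\|\tilde b\|_\infty\|\tilde b\|_1\lesssim_d\lambda\sum_{Q\in\mathcal{B}_1}\|f_Q\|_1\lesssim_d\lambda$. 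For $b^*$, I would decompose $\bigcup_{Q\in\mathcal{B}_1}Q^*$ into its connected components $\mathcal{C}_i$, let $Q_i\in\mathcal{B}_1$ be the cube with $Q_i^*\subset\mathcal{C}_i$ of maximal side $\ell_i$, and invoke Lemma~\ref{lemma2} to get $\mathcal{C}_i\subset4\cdot Q_i^*=\frac92\cdot Q_i$ together with $\sum_{Q^*\subset\mathcal{C}_i}|Q|\le|\tfrac92 Q_i|$; since the $\mathcal{C}_i$ are disjoint, $|b^*|^2=\sum_i\bigl|\sum_{Q^*\subset\mathcal{C}_i}\chi_{Q^*}f_Q^{(N)}\bigr|^2$, and one is reduced to bounding, for each $i$, the integral over $\mathbb{T}^d\setminus5\cdot Q_i$ of $\mathbf{K}_N*\bigl(\chi_{\frac92 Q_i}\bigl(\sum_{Q^*\subset\mathcal{C}_i}|f_Q^{(N)}|\bigr)^2\bigr)$ by a quantity that sums over $i$ against $\sum_{Q\in\mathcal{B}_1}\|f_Q\|_1\lesssim1$.

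I expect this last $b^*$ estimate to be the main obstacle, and the place where the argument genuinely departs from one dimension. The point is that $\mathbf{K}_N$ is a true product kernel: its mass outside $\frac92 Q_i$ decays only one coordinate at a time, so $\int_{\mathbb{T}^d\setminus5Q_i}\mathbf{K}_N$ is only $\lesssim_d(N\ell_i)^{-1}$, whereas the crude Bernstein bound $\|f_Q^{(N)}\|_\infty\lesssim N^d\|f_Q\|_1$ makes $\sum_{Q^*\subset\mathcal{C}_i}|f_Q^{(N)}|$ as large as $\sim\lambda(N\ell_i)^d$; a literal transcription of the one-variable estimate therefore leaves an unfavorable factor $(N\ell_i)^{d-1}$. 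Removing it requires extracting more from Lemma~\ref{lemma2} --- controlling the overlap of the dilates $\{Q^*:Q^*\subset\mathcal{C}_i\}$ (the cubes of $\mathcal{B}_1$ being disjoint, dyadic, and nonadjacent) and exploiting the localization of each $f_Q^{(N)}$ near its cube $Q$ --- so that the powers of $N$ cancel and the per-component bound reduces to $\lesssim_d\lambda\sum_{Q^*\subset\mathcal{C}_i}\|f_Q\|_1$. Once this is in place, summing the $g$, $b_1$, $b_2$ estimates and undoing the normalizations in $\|f\|_1$ and $\lambda$ gives the proposition with $C_d$ depending only on $d$.
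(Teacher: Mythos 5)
Up to the treatment of $b^*$ your outline coincides with the paper's own proof of Proposition~\ref{high d}: the paper performs the Plancherel reduction one coordinate at a time rather than via your $2^d$-fold expansion, but it arrives at the same inequality $\frac{1}{N^d}\sum_{\overline{n}\in R_N^+}\int_{\mathbb{T}^d\setminus E}|S_{\overline{n}}f|^2\lesssim_d\int_{\mathbb{T}^d\setminus E}K_{(d),N}*|f^{(N)}|^2$ with $K_{(d),N}(x)=N^{-d}\prod_j\min(N^2,|x_j|^{-2})$, and then runs the same $d$-dimensional Calder\'on--Zygmund decomposition, the same $E=\bigcup_{Q}5\cdot Q$, and essentially the same arguments for $g$, $b_2$ and $\tilde b$ that you sketch. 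The problem is that you stop exactly at the decisive point: the per-component bound for $b^*$, namely $\int_{\mathbb{T}^d\setminus E}K_{(d),N}*\bigl|\sum_{Q^*\subset\mathcal{C}_i}\chi_{Q^*}f_Q^{(N)}\bigr|^2\lesssim\lambda\sum_{Q^*\subset\mathcal{C}_i}\|f_Q\|_1$, is never proved; you only record the expectation that ``extracting more from Lemma~\ref{lemma2}'' will make the powers of $N$ cancel. Since everything else is routine, this is a genuine gap: as written, the proposal does not prove the proposition.

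Your bookkeeping of the obstacle is, however, accurate, and it is worth comparing it with what the paper actually does there. The paper is precisely the literal transcription you describe: Bernstein extracts $N^d\sum_{Q^*\subset\mathcal{C}_i}\|f_Q\|_1$, Lemma~\ref{lemma2} confines the support to $4\cdot J_i^*$, and the proof then asserts $\sup_{y\in4\cdot J_i}\int_{\mathbb{T}^d\setminus5\cdot J_i}K_{(d),N}(x-y)\,dx\lesssim N^{-d}|J_i|^{-1}$, which is exactly what makes the powers of $N$ cancel. For the product kernel this tail bound holds only when $d=1$: a point $x$ leaves $5\cdot J_i$ as soon as a single coordinate does, and the region where one coordinate exceeds $\sim\ell_i$ while the others are unrestricted already contributes $N^{-d}\cdot\ell_i^{-1}\cdot N^{d-1}=(N\ell_i)^{-1}=N^{-1}|J_i|^{-1/d}$, so the paper's own accounting loses the same factor $(N\ell_i)^{d-1}$ you computed; in other words, your doubt targets a step the paper asserts without justification. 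Moreover, the loss does not appear removable by refining Lemma~\ref{lemma2}: testing with $f$ a bump of mass one and very small width (so the bad family is a single cube $Q$ of side $\ell\approx\lambda^{-1/d}$ and $S_{\overline{n}}f\approx D_{\overline{n}}$ for $\|\overline{n}\|_\infty\le N$ well below the inverse width) gives $\frac{1}{N^d}\sum_{\overline{n}\in R_N^+}|S_{\overline{n}}f(\theta)|^2\gtrsim\prod_j\min(N^2,|\theta_j|^{-2})$, and the integral of this product kernel over the complement of \emph{any} set of measure $1/\lambda$ (the optimal choice being a super-level set, essentially a neighbourhood of the coordinate cross) is of order $\lambda(\log\lambda)^{2(d-1)}$ once $N$ is large. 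This indicates that for $d\ge2$ the bound $C_d\lambda\|f\|_1^2$ cannot be reached along this route at all, and that the statement itself requires a logarithmic loss or a reformulation; so the step you defer is not a technicality to be absorbed into Lemma~\ref{lemma2}, but the point where both your proposal and the paper's written argument break down.
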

\begin{proof}
Just as in Theorem \ref{first thing}, we perform a Calder\'{o}n-Zygmund decomposition at height $\lambda>0$ while fixing $N \geq 1$ and assuming $\|f\|_1=1$.  For this proof let $|I|:=\mathscr{H}^d(I)$ for any measurable set $I$.  We label the set of bad cubes $\mathcal{B}=\mathcal{B}_1 \cup \mathcal{B}_2$ where $|I|> N^{-d}$ for all $I \in \mathcal{B}_1$ and $|I|\leq N^{-d}$ for $I \in \mathcal{B}_2$ and 
\begin{align*}
E:=\bigcup_{I \in \mathcal{B}}5\cdot I.
\end{align*}
Then with $D_{n_1}$ acting in the first variable only and $S_{n_*}$ in all the others
\begin{align*}
&\sum_{n_i=1\atop 1\leq i \leq d}^N\int_{\mathbb{T}^d \setminus E} |S_{\overline{n}}f(\theta)|^2\,d\theta = \sum_{n_i=1 \atop i \geq 2}^N \sum_{n_1=1}^N \int_{\mathbb{T}^d \setminus E} |D_{n_1}*S_{n_*}f(\theta)|^2\,d\theta \\
& \lesssim\sum_{n_i=1 \atop i \geq 2}^N  \int_{\mathbb{T}^d \setminus E} \sum_{n_1=1}^N\left|\int_{|x_1|>N^{-1}}\frac{e(n_1x_1)}{\sin(x_1)}S_{n_*}f(\theta_1-x_1,\theta_*)\,dx_1\right|^2\,d\theta\\
&+ \sum_{n_i=1 \atop i \geq 2}^N  \int_{\mathbb{T}^d \setminus E}\int_{|x_1|<N^{-1}}N^2\left|S_{n_*}f(\theta_1-x_1,\theta_*)\right|^2\,dx_1\,d\theta
\end{align*}
where $\theta_*$ and $n_*$ are the vectors $\theta$ and $\overline{n}$, respectively, with the first component deleted. Now we use Plancherel as we did in Theorem \ref{first thing} with $S_{n_*}f$ in place of $f$.
\begin{align*}
&\sum_{n_i=1 \atop i \geq 2}^N  \int_{\mathbb{T}^d \setminus E} \sum_{n_1=1}^N\left|\int_{\mathbb{T}}\frac{e(n_1x_1)}{\sin(x_1)}S_{n_*}f(\theta_1-x_1,\theta_*)\,dx_1\right|^2\,d\theta \\
& \lesssim \sum_{n_i=1 \atop i \geq 2}^N  \int_{\mathbb{T}^d \setminus E}(K_{1,N}*\left|S_{n_*}f(\theta_*)\right|^2)(\theta_1)\,d\theta
\end{align*}
where $K_{1,N}(z)=\mbox{min}(N^2,|z|^{-2})$ for $z \in \mathbb{T}$.  Now we note that
\begin{align*}
&\sum_{n_i=1 \atop i \geq 2}^N  \int_{\mathbb{T}^d \setminus E}(K_{1,N}*\left|S_{n_*}f(\theta_*)\right|^2)(\theta_1)\,d\theta \\
&=\sum_{n_i=1 \atop i \geq 3}^N  \int_{\mathbb{T}^d \setminus E}(K_{1,N}*\sum_{n_2=1}^N\left|S_{n_*}f(\theta_*)\right|^2)(\theta_1)\,d\theta.
\end{align*}
Fixing $\theta_1$, we have 
\begin{align*}
&\sum_{n_2=1}^N |S_{n_*}f(\theta)|^2 \\
&\lesssim \sum_{n_1=1}^N\left|\int_{|x_2|>N^{-1}}\frac{e(n_2x_2)}{\sin(x_2)}S_{n_{**}}f(\theta_1, \theta_2-x_2,\theta_{**})\,dx_2\right|^2 \\
& \hspace{1.5cm}+ \sum_{n_1=1}^N\left|\int_{|x_2|<N^{-1}}NS_{n_{**}}f(\theta_1, \theta_2-x_2,\theta_{**})\,dx_2\right|^2\\
& \lesssim (K_{1,N}*\left|S_{n_{**}}f(\theta_1, \theta_{**})\right|^2)(\theta_2).
\end{align*}
where $n_{**}$ and $\theta_{**}$ are the vectors $\overline{n}$ and $\theta$, respectively, with the first and second components deleted. Repeating this process leaves us with
\begin{align*}
 \sum_{\overline{n} \in R^+_N} \int_{\mathbb{T}^d \setminus E} |S_{\overline{n}}f(\theta)|^2 \,d\theta \lesssim N^d \int_{\mathbb{T}^d\setminus E}  (K_{(d),N} * |f^{(N)}|^2)(\theta)  \,d\theta 
\end{align*}
where 
\begin{align*}
K_{(d),N}(\theta) := \frac{1}{N^d} \prod_{i=1}^d K_1(\theta_i) =\frac{1}{N^d} \prod_{i=1}^d \min(N^2,|\theta_i|^{-2}).
\end{align*}
Here we also take the liberty to replace $f$ with $f^{(N)}=V_{d,N}*f$, where $V_{d,N}(x):=\prod_{i=1}^d V_N(x_i)$. Now by the same process as Theorem \ref{first thing} we have
\begin{align*}
&\int_{\mathbb{T}^d\setminus E}  K_{(d),N} * |f^{(N)}|^2  \\
&\lesssim \int_{\mathbb{T}^d\setminus E}  K_{(d),N} * |g^{(N)}|^2 +\int_{\mathbb{T}^d\setminus E}  K_{(d),N} * |b^{(N)}_1|^2 +\int_{\mathbb{T}^d\setminus E}  K_{(d),N} * |b^{(N)}_2|^2
\end{align*}
where, in this case, $\mathcal{B}_1$ is the set of bad cubes of length $>1/N$ and thus of measure $>N^{-d}$. Then for $g^{(N)}$, $\|g^{(N)}\|_{\infty} \lesssim \lambda$ and 
\begin{align*}
\int_{\mathbb{T}^d\setminus E}  K_{(d),N} * \left|g^{(N)}\right|^2  \lesssim \lambda \|K_{(d),N}*g^{(N)}\|_1 \lesssim \lambda \|K_{(d),N}\|_1\|g^{(N)}\|_1 \lesssim \lambda.
\end{align*}
For any cube $I$, let $I^*:= \frac{9}{8} \cdot I$, and let $b^{(N)}_1=b^*+\tilde{b}= \sum_{I \in \mathcal{B}_1} \chi_{I^*}f_I^{(N)}+\sum_{I \in \mathcal{B}_1} \chi_{\mathbb{T}^d \setminus I^*}f_I^{(N)}$.  We must account for the fact that the set $I^*$ are not pairwise disjoint, so we will utilize Lemma \ref{lemma2} to deal with this.  First we break up $\cup_{I \in \mathcal{B}_1} I^*$ into pairwise disjoint components $\{\mathcal{C}_i\}$
\begin{align*}
&\int_{\mathbb{T}^d\setminus E}  K_{(d),N} * |b^*|^2  \\
&= \sum_{i} \int_{\mathbb{T}^d\setminus E} K_{(d),N} * \left| \sum_{I^* \subset \mathcal{C}_i}\chi_{I^*}f^{(N)}_I\right|^2 \lesssim \sum_{i} \int_{\mathbb{T}^d\setminus E} K_{(d),N} * \left( \sum_{I^* \subset \mathcal{C}_i}\chi_{I^*}\left|f^{(N)}_I\right|\right)^2 \\
&\lesssim \sum_{i} \left(\sum_{I \in \mathcal{B}_1 \atop I^* \subset \mathcal{C}_i}\|f_I\|_1\right)N^d\int_{\mathbb{T}^d\setminus E}  K_{(d),N} * \sum_{I^* \subset \mathcal{C}_i}\chi_{I^*}\left|f^{(N)}_I\right| \hspace{.5cm} \mbox{By Bernstein} 
\end{align*}
It is here we use the same argument as Theorem \ref{first thing}.  Specifically, we have put ourselves in a position to take advantage of the decay of the kernel $K_{(d),N}$ due to our careful definition of $E$ and $\mbox{supp}(\sum_{I^* \subset \mathcal{C}_i} \chi_{I^*}\left|f^{(N)}_I\right|)$.   For each $i$, let $J_i$ be the largest cube such that $J_i \subset \mathcal{C}_i$, then
\begin{align*}
\int_{\mathbb{T}^d\setminus E}  K_{(d),N} * \sum_{I^* \subset \mathcal{C}_i} \chi_{I^*}\left|f^{(N)}_I\right| &\lesssim \int_{\mathbb{T}^d\setminus 5\cdot J_i}  \int_{\mathbb{T}^d} K_{(d),N}(x-y)\chi_{4\cdot J_i}(y)\left|\sum_{I^* \subset \mathcal{C}_i} f^{(N)}_I(y)\right|\,dy dx\\
&\lesssim \left(\sum_{I^* \subset \mathcal{C}_i}\|f_I\|_1\right) \left(\sup_{y \in 4\cdot J_i} \int_{\mathbb{T}^d \setminus 5\cdot J_i} K_{(d),N}(x-y)\,dx\right) \\
&\lesssim \left(\sum_{I^* \subset \mathcal{C}_i}\|f_I\|_1\right)N^{-d}(|J_i|)^{-1}
\end{align*}
Since $\cup_{I^* \subset \mathcal{C}_i} I \subset 4\cdot J_i$, $\sum |I| \lesssim |J_i|$. Therefore, $(\sum |I|)/|J_i| \lesssim 1$ and
\begin{align*}
\int_{\mathbb{T}^d\setminus E}  K_{(d),N} * \sum_{I^* \subset \mathcal{C}_i} \chi_{I^*}\left|f^{(N)}_I\right| &\lesssim \left(\sum_{I^* \subset \mathcal{C}_i}\|f_I\|_1\right)N^{-d}(|J_i|)^{-1}= \left(\sum_{I^* \subset \mathcal{C}_i}|I|\frac{\|f_I\|_1}{|I|}\right)N^{-d}(|J_i|)^{-1}\\
&\lesssim \left(\sum_{I^* \subset \mathcal{C}_i}|I|\lambda \right)N^{-d}(|J_i|)^{-1} \\
&\lesssim \lambda N^{-d}.
\end{align*} Using this estimate we have
\begin{align*}
\int_{\mathbb{T}^d\setminus E}  K_{(d),N} * |b^*|^2  &\lesssim \sum_{i} \left(\sum_{I \in \mathcal{B}_1 \atop I^* \subset \mathcal{C}_i}\|f_I\|_1\right)N^d\int_{\mathbb{T}^d\setminus E}  K_{(d),N} *\left| \sum_{I^* \subset \mathcal{C}_i}\chi_{I^*}f^{(N)}_I\right| \\
&\lesssim \sum_{i} \left(\sum_{I \in \mathcal{B}_1 \atop I^* \subset \mathcal{C}_i}\|f_I\|_1\right)N^d \left(N^{-d} \lambda\right) \lesssim \lambda \sum_{I \in \mathcal{B}_1} \|f_I\|_1 \\
& \lesssim \lambda
\end{align*}
which is what we need for $b^*$.  Just as in Theorem \ref{first thing}, we show that $\|\tilde{b}\|_{\infty} \lesssim \lambda$.  The key in the translation to this setting is that we still retain the property that the cubes $I \in \mathcal{B}_1$ are disjoint and that the distance between each interval $I$ and $\mathbb{T}^d\setminus \cup_{I \in \mathcal{B}_1} I^*$ is at least a constant factor of the length of $I$.  Therefore, we can say that
\begin{align*}
&\left|\tilde{b}(x)\right|\lesssim \sum_{I \in \mathcal{B}_1} \chi_{\mathbb{T}^d \setminus I^*}(x)\left|f_I^{(N)}(x)\right|=\sum_{I \in \mathcal{B}_1} \chi_{\mathbb{T}^d \setminus I^*}(x)\left|\int_{\mathbb{T}^d} V_{d,N}(x-y)f_I(y)\,dy\right|.
\end{align*}
Assuming $x \in \mathbb{T}^d \setminus \cup_{I \in \mathcal{B}_1} I^*$, we have
\begin{align*}
\sum_{I \in \mathcal{B}_1} \chi_{\mathbb{T}^d \setminus I^*}(x)\left|\int_{\mathbb{T}^d} V_{d,N}(x-y)f_I(y)\,dy\right|& \lesssim \sum_{I \in \mathcal{B}_1} \int_{\mathbb{T}^d} \left|\chi_{I}(y)V_{d,N}(x-y)f_I(y)\right|\,dy \\
&\lesssim \sum_{I \in \mathcal{B}_1}  \|f_I\|_1 \sup_{y \in I }\left|\chi_I(y)V_{d,N}(x-y)\right|
\end{align*}
We again establish that for $x \in \mathbb{T}^d \setminus I^*$
\begin{align*}
\sup_{y \in I } |x-y| \gtrsim \max\left(\mbox{dist}(x, I), \frac{9}{16} (\mbox{length of }I)\right) \gtrsim \mbox{dist}(x, I)+ (\mbox{length of }I)
\end{align*}
Then using the properties of the Calder\'{o}n-Zygmund decomposition, 
\begin{align*}
\sum_{I \in \mathcal{B}_1}  \|f_I\|_1 \sup_{y \in I }\left|\chi_I(y)V_{d,N}(x-y)\right|&=\sum_{I \in \mathcal{B}_1}  \|f_I\|_1 \frac{|I|}{|I|}\sup_{y \in I }\left|\chi_I(y)V_{d,N}(x-y)\right| \\
&\lesssim \sum_{I \in \mathcal{B}_1}  \|f_I\|_1 \frac{1}{|I|}\int_{\mathbb{T}^d} \chi_{I}(z)\sup_{y \in I }\left|\chi_I(y)V_{d,N}(x-y) \right|\,dz\\
& \lesssim \lambda\sum_{I \in \mathcal{B}_1}  \int_{\mathbb{T}^d}  \chi_{I}(z)\sup_{y \in I }\left|\chi_I(y)V_{d,N}(x-y) \right|\,dz
\end{align*}
Then by the same Riemann sum argument from Theorem \ref{first thing},
\begin{align*}
& \lambda\sum_{I \in \mathcal{B}_1}  \int_{\mathbb{T}^d}  \chi_{I}(z)\sup_{y \in I }\left|\chi_I(y)V_{d,N}(x-y) \right|\,dz \\
&\lesssim \lambda\sum_{I \in \mathcal{B}_1}  \int_{\mathbb{T}^d} \chi_{I}(z)\left|V_{d,N}( \mbox{dist}(x, I)+ (\mbox{length of }I)) \right|\,dz \\
&\lesssim \lambda \int_{\mathbb{T}^d} \sum_{I \in \mathcal{B}_1}\chi_I(z)\left|V_{d,N}( \mbox{dist}(x, I)+ (\mbox{length of }I))\right|\,dz \\
&\lesssim \lambda \int_{\mathbb{T}^d} |V_{d,N}(x-z)|\,dz \lesssim \lambda
\end{align*}
  Thus
\begin{align*}
&\int_{\mathbb{T}^d\setminus E}  K_{(d),N} * \left|b^{(N)}_1\right|^2  \\
&\lesssim \int_{\mathbb{T}^d\setminus E}  K_{(d),N} * \left|b^*\right|^2+\int_{\mathbb{T}^d\setminus E}  K_{(d),N} * \left|\tilde{b}\right|^2\\
&\lesssim  \lambda
\end{align*}
Finally, for $b^{(N)}_2$, we will show that $\|b^{(N)}_2\|_{\infty} \lesssim \lambda$
\begin{align*}
\left|b^{(N)}_2(x)\right| &\lesssim \sum_{J} \left|f^{(N)}_J(x)\right| \lesssim \sum_{J} \int_{\mathbb{T}^d}|f_J(y)||V_{d,N}(x-y)|\,dy\\
&\lesssim \sum_J \|f_J\|_1\sup_{y \in J} |V_{d,N}(x-y)| \\
&\lesssim \lambda \sum_{J} \frac{1}{N^d} \sup_{y \in J} |V_{d,N}(x-y)|
\end{align*}
Now, we can bound this sum by a Riemann sum just as we did for $\tilde{b}$ or $b^{(N)}_2$ from Theorem \ref{first thing}.  Therefore, we have
\begin{align*}
 \lambda \sum_{J} \frac{1}{N^d} \sup_{y \in J} |V_{d,N}(x-y)| &\lesssim \lambda \int_{\mathbb{T}^d} |V_{d,N}(x-z)|\,dz \\
& \lesssim \lambda 
\end{align*}
In summary,
\begin{align*}
&\frac{1}{N^d} \sum_{\overline{n} \in R^+_N} \int_{\mathbb{T}^d \setminus E} |S_{\overline{n}}f(\theta)|^2 \,d\theta \\
& \lesssim \int_{\mathbb{T}^d\setminus E}  (K_{(d),N} * |f^{(N)}|^2)(\theta)  \,d\theta \\
&\lesssim \int_{\mathbb{T}^d\setminus E}  K_{(d),N} * |g^{(N)}|^2 +\int_{\mathbb{T}^d\setminus E}  K_{(d),N} * |b^{(N)}_1|^2 +\int_{\mathbb{T}^d\setminus E}  K_{(d),N} * |b^{(N)}_2|^2 \\
&\lesssim \lambda
\end{align*}
as desired.
\end{proof}
%
%
%
%
%
%
%
%
\subsection{Higher Powers}
Let us consider the statement analogous to Theorem \ref{first thing} with $p>2$ instead of $p=2$:
\begin{align*}
\sup_{N\geq 1} \frac{1}{N} \sum_{n=1}^N \int_{\mathbb{T} \setminus E} |S_nf(\theta)|^p\,d\theta \lesssim \lambda^{p-1}\|f\|_1^p
\end{align*}
where  for all $f \in L^1(\mathbb{T})$, $|E|\leq \frac{1}{\lambda}$. We have reasonable suspicion to believe that this estimate may hold given Theorem \ref{strong sums} and Theorem \ref{first thing}.  However, we also may expect to obtain a contradicting statement if indeed we encounter growth in the integral of $\frac{1}{N}\sum_{n=1}^N |S_nf|^p$ on $\mathbb{T} \setminus E$ for $p>2$ similar to the growth of the integral of 
\begin{align*}
 \frac{1}{N} \sum_{n=1}^N n^p \chi_{[-\frac{1}{N}, \frac{1}{N}]}
\end{align*}
 on $\mathbb{T}$ for $p>1$. By expanding the methods used for Theorem \ref{first thing} we can provide more evidence indicating the latter to be true:

\begin{proposition} \label{p>2}
Let $\lambda >0$. Then for $f \in L^1(\mathbb{T})$, $p\geq 2$, there exists $E \subset \mathbb{T}^d$, with $|E| \leq \frac{1}{\lambda}$ such that
\begin{align}
\sup_{N \geq 1} \ (N\log^{p-2} N)^{-1} \sum_{n=1}^N \int_{\mathbb{T}\setminus E} |S_nf(\theta)|^p  \,d\theta \lesssim \lambda^{p-1} \|f\|^p_1
\end{align}
\end{proposition}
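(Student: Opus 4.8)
The plan is to deduce the proposition from Theorem~\ref{first thing} by means of the elementary inequality
\[
\sum_{n=1}^N a_n^{\,p}\ \le\ \Big(\max_{1\le n\le N}a_n\Big)^{p-2}\sum_{n=1}^N a_n^{\,2}\qquad(p\ge 2,\ a_n\ge 0),
\]
applied with $a_n=|S_nf(\theta)|$. Thus it suffices to prove a \emph{pointwise} bound on the truncated Carleson maximal function, namely $\sup_{1\le n\le N}|S_nf(\theta)|\lesssim\lambda\log N$ for $\theta$ outside the very same exceptional set $E$ produced by Theorem~\ref{first thing}. So first I would set up exactly as there: normalize $\|f\|_1=1$, take $f\ge 0$, fix $N$ (reducing as usual to $N=2^j$ and to $\lambda>1$, the case $\lambda\le 1$ being handled by $E=\mathbb{T}$), perform a Calder\'on--Zygmund decomposition at height $\sim\lambda$, write $f=g+b_1+b_2$ with $\mathcal B=\mathcal B_1\cup\mathcal B_2$ split by whether $|I|>1/N$, and put $E=\bigcup_{I\in\mathcal B}5\cdot I$, so that $|E|\le 1/\lambda$ and $E$ is literally the set appearing in Theorem~\ref{first thing}.

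The core of the argument is the \emph{Claim}: for every $\theta\in\mathbb{T}\setminus E$ and every $1\le n\le N$ one has $|S_nf(\theta)|\lesssim\lambda\log N$. Splitting $S_nf=S_ng+S_nb_1+S_nb_2$, the good part is immediate from the classical bound $\|D_n\|_{L^1(\mathbb{T})}\lesssim\log n$: since $\|g\|_\infty\le\lambda$,
\[
|S_ng(\theta)|=|(D_n*g)(\theta)|\le\|D_n\|_{L^1}\,\|g\|_\infty\lesssim\lambda\log N.
\]
For $b_2$ I would use that $D_n*V_N=D_n$ for $n\le N$ (the Fourier multipliers $\chi_{[-n,n]}\widehat{V_N}$ and $\chi_{[-n,n]}$ agree), whence $S_nb_2=D_n*b_2^{(N)}$ with $b_2^{(N)}=V_N*b_2$, so that
\[
|S_nb_2(\theta)|\le\|D_n\|_{L^1}\,\|b_2^{(N)}\|_\infty\lesssim\lambda\log N,
\]
the bound $\|b_2^{(N)}\|_\infty\lesssim\lambda$ being precisely what is established inside the proof of Theorem~\ref{first thing}.

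The interesting piece is $b_1$, where only the first-power decay $|D_n(y)|\lesssim|y|^{-1}$ of the Dirichlet kernel is available. For $I\in\mathcal B_1$ and $\theta\notin 5\cdot I$ one gets, uniformly in $n$,
\[
|(D_n*f_I)(\theta)|=\Big|\int_I D_n(\theta-y)f(y)\,dy\Big|\lesssim\frac{\|f_I\|_1}{\operatorname{dist}(\theta,I)+|I|}\lesssim\frac{\lambda|I|}{\operatorname{dist}(\theta,I)+|I|}.
\]
Summing over $\mathcal B_1$ by grouping the intervals into the dyadic annuli $\operatorname{dist}(\theta,I)\sim 2^{-k}$, one observes that each annulus contributes $O(1)$ (the disjoint intervals of $\mathcal B_1$ lying in one annulus have total length comparable to its width, since $\theta\notin 5\cdot I$ forces $|I|\lesssim\operatorname{dist}(\theta,I)$), while the hypothesis $|I|>1/N$ forces $\operatorname{dist}(\theta,I)\gtrsim 1/N$, so that only $\lesssim\log N$ annuli are nonempty. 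Hence $|S_nb_1(\theta)|\lesssim\lambda\log N$, completing the Claim. This is the one place where the exponent on $\log$ is delicate, and it is exactly the first-power decay together with the scale restriction $|I|>1/N$ that yields the clean power $\log N$ rather than $\log^2 N$; this is the main obstacle of the proof.

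With the Claim in hand the proposition follows quickly. For $\theta\in\mathbb{T}\setminus E$,
\[
\sum_{n=1}^N|S_nf(\theta)|^p\le\Big(\sup_{1\le n\le N}|S_nf(\theta)|\Big)^{p-2}\sum_{n=1}^N|S_nf(\theta)|^2\lesssim(\lambda\log N)^{p-2}\sum_{n=1}^N|S_nf(\theta)|^2,
\]
so, integrating over $\mathbb{T}\setminus E$ and invoking Theorem~\ref{first thing} with this same set $E$,
\[
\frac{1}{N\log^{p-2}N}\int_{\mathbb{T}\setminus E}\sum_{n=1}^N|S_nf(\theta)|^p\,d\theta\lesssim\lambda^{p-2}\cdot\frac{1}{N}\sum_{n=1}^N\int_{\mathbb{T}\setminus E}|S_nf(\theta)|^2\,d\theta\lesssim\lambda^{p-2}\cdot\lambda=\lambda^{p-1}.
\]
Taking the supremum over $N$, removing the normalization $\|f\|_1=1$ by the homogeneity of both sides in $\|f\|_1$, and absorbing the C-Z height adjustment needed to arrange $|E|\le 1/\lambda$ into the implied constant, gives the stated estimate.
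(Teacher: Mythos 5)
Your proposal is correct, and it takes a genuinely different route from the paper. The paper proves the estimate first for even exponents $p=2m$: it keeps the $g$ and $b_2$ parts via $L^{2m}$-boundedness of the Hilbert transform, and for $b_1$ it writes the $2m$-th power of the tail part of $S_n$ as the square of an $(m-1)$-fold self-convolution, applies Plancherel and Young, invokes Corollary \ref{decay corollary} for the quadratic factor, proves the sup bound $\sup_{x\in\mathbb{T}\setminus E}(k_3*|b_1^{(N)}|)(x)\lesssim\lambda\log N$ for the remaining factors, and finally appeals to interpolation to reach all $p\ge 2$. You instead prove the pointwise truncated maximal bound $\sup_{1\le n\le N}|S_nf(\theta)|\lesssim\lambda\log N$ on $\mathbb{T}\setminus E$ (essentially a weak $(1,1)$ bound for $\mathcal{C}_Nf/\log N$ with the explicit Calder\'on--Zygmund exceptional set $E=\cup_{I\in\mathcal B}5\cdot I$): the $g$ and $b_2$ pieces via $\|D_n\|_1\lesssim\log N$ together with $\|g\|_\infty\lesssim\lambda$ and $\|b_2^{(N)}\|_\infty\lesssim\lambda$ (the latter, and the identity $D_n*V_N=D_n$ for $n\le N$, are exactly as in the paper), and the $b_1$ piece by the first-power decay of $D_n$ plus the dyadic-annulus count, where $|I|>1/N$ and $\theta\notin 5\cdot I$ correctly limit the number of scales to $\lesssim\log N$; then the trivial inequality $\sum a_n^p\le(\max a_n)^{p-2}\sum a_n^2$ and Theorem \ref{first thing} (with the same set $E$, which is legitimate since that is precisely the set constructed in its proof) finish the argument. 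What your approach buys: it treats all real $p\ge2$ at once with the exact exponent $\log^{p-2}N$, avoiding both the even-power Plancherel/Young machinery and the interpolation step that the paper only sketches; your key pointwise lemma is in spirit the same ingredient as the paper's $\lambda\log N$ bound for $k_3*|b_1^{(N)}|$, but deployed more transparently. Two cosmetic caveats, shared with the paper: the statement should be read with $N\ge 2$ (or $\log N$ replaced by $\log(N+1)$), and the reduction to $f\ge0$ should be replaced by the usual splitting into positive/negative (real/imaginary) parts at the cost of a constant.
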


\begin{proof}
We first prove that this estimate works for $p=2m$, $m \in \mathbb{N}$, $m \geq 2$.  An interpolation argument provides the rest of the result.  First, we let $\|f\|_1=1$, $f\geq 0$, and $E=\cup_{I \in \mathcal{B}}\; 5\cdot I$ where $\mathcal{B}$ is the set of bad intervals from a Calder\'{o}n-Zygmund decomposition of $f \in L^p$ at height $\lambda$ for $\lambda>1$.
First we assume $m=2$, then we consider the functions $g$ and $b_2$ using the same notation as in Theorem \ref{first thing}, where $f=g+b_1+b_2$.  Then, we note that
\begin{align*} &\frac{1}{N} \sum_{n=1}^N \int_{\mathbb{T}\setminus E} |S_nf(x)|^4\,dx \\
 &\lesssim \frac{1}{N} \sum_{n=1}^N \int_{\mathbb{T}\setminus E} |S_ng(x)|^4\,dx \\
& \hspace{.5cm}+\frac{1}{N} \sum_{n=1}^N \int_{\mathbb{T}\setminus E} |S_nb_1(x)|^4\,dx+\frac{1}{N} \sum_{n=1}^N \int_{\mathbb{T}\setminus E} |S_nb_2(x)|^4\,dx.
\end{align*} 
By the $L^4$ boundedness of the Hilbert Transform,
\begin{align*}
\frac{1}{N} \sum_{n=1}^N \int_{\mathbb{T}\setminus E} |S_ng(x)|^4\,dx &\lesssim \frac{1}{N} \sum_{n=1}^N \int_{\mathbb{T}} |S_ng(x)|^4\,dx =\frac{1}{N} \sum_{n=1}^N\|S_ng\|_4^4 \\
&\lesssim \frac{1}{N} \sum_{n=1}^N\|g\|_4^4 \lesssim \lambda^3 \|g\|_1\\
&\lesssim \lambda^3
\end{align*}
We now make use of the fact that $f$ is less than a constant factor of $\lambda$ on the small intervals. We also note that for any function $f$ on $\mathbb{T}$, $f^{(N)}:=f*V_N$, just as in Theorem \ref{first thing}. Using the Hilbert transform again, we have
\begin{align*}
\frac{1}{N} \sum_{n=1}^N \int_{\mathbb{T}\setminus E} |S_nb_2(x)|^4dx &=\frac{1}{N} \sum_{n=1}^N \int_{\mathbb{T}} |S_nb^{(N)}_2(x)|^4dx\\
&\lesssim \frac{1}{N} \sum_{n=1}^N\|b_2^{(N)}\|_4^4 \lesssim \lambda^3 \|b_2^{(N)}\|_1 \\
&\lesssim \lambda^3
\end{align*}
Now for the large intervals we start by using the same process as before.  We again note that $D_n= k_1+k_2$, where $k_1(x)=\chi_{[-\frac{1}{N},\frac{1}{N}]}(x) D_n(x)$ and $k_2(x) = \chi_{[-\frac{1}{N}, \frac{1}{N}]^c}(x) D_n(x)=C \frac{e(nx)-e(-nx)}{\sin(x)}\chi_{[-\frac{1}{N},\frac{1}{N}]^c}(x)$.  Then 
\begin{align*}
\frac{1}{N} \sum_{n=1}^N \int_{\mathbb{T}\setminus E} |S_nb_1(x)|^4dx&=\frac{1}{N} \sum_{n=1}^N \int_{\mathbb{T}\setminus E} |S_nb^{(N)}_1(x)|^4dx\\
&\lesssim \frac{1}{N} \sum_{n=1}^N \int_{\mathbb{T}\setminus E} |(k_1*b^{(N)}_1)(x)|^4dx\\
& \hspace{.5cm}+\frac{1}{N} \sum_{n=1}^N \int_{\mathbb{T}\setminus E} |k_2*b^{(N)}_1(x)|^4dx
\end{align*}
and we first attack $k_1*b^{(N)}_1$:
\begin{align*}
\frac{1}{N}\sum_{n=1}^N \int_{\mathbb{T}\setminus E} |(k_1*b^{(N)}_1)(x)|^{4} \,dx &= \sum_{n=1}^N \int_{\mathbb{T}\setminus E} \left| \int_{\mathbb{T}} \chi_{[-\frac{1}{N},\frac{1}{N}]}(y)D_n(y) b^{(N)}_1(x-y)dy \right|^{4} \,dx 
\end{align*}
$\chi_{[-\frac{1}{N},\frac{1}{N}]}(y)D_n(y)dy$ are measures of uniformly bounded mass because $|D_n| \lesssim n$, so by Jensen
\begin{align*}
 \sum_{n=1}^N \int_{\mathbb{T}\setminus E} \left| \left(\chi_{[-\frac{1}{N},\frac{1}{N}]}D_n* b^{(N)}_1\right)(x) \right|^{4} \,dx &\lesssim \sum_{n=1}^N \int_{\mathbb{T}\setminus E} \left(\chi_{[-\frac{1}{N},\frac{1}{N}]}|D_n|* |b^{(N)}_1|^4\right)(x) \,dx\\
&\lesssim \sum_{n=1}^N n\int_{\mathbb{T}\setminus E} \left(\chi_{[-\frac{1}{N},\frac{1}{N}]}* |b^{(N)}_1|^4\right)(x) \,dx\\
&\lesssim N^2\int_{\mathbb{T}\setminus E} \left(\chi_{[-\frac{1}{N},\frac{1}{N}]}* |b^{(N)}_1|^4\right)(x) \,dx\\
\end{align*}
By the decomposition of $b_1^{(N)}$ from Theorem \ref{first thing}, $b_1^{(N)}=b^*+\tilde{b}= \sum_{I \in \mathcal{B}_1} \chi_{I^*} f_I^{(N)}+ \sum_{I \in \mathcal{B}_1} \chi_{\mathbb{T} \setminus I^*} f_I^{(N)}$
\begin{align*}
&N^2 \int_{\mathbb{T}\setminus E}  \left(\chi_{[-\frac{1}{N},\frac{1}{N}]}* \left|b^{(N)}_1\right|^4\right)(x) \,dx \\
&\lesssim  N^2 \int_{\mathbb{T}\setminus E} \left(\chi_{[-\frac{1}{N},\frac{1}{N}]} *\left|b^*\right|^4\right)(x) \,dx+ N^2 \int_{\mathbb{T}\setminus E} \int_{\mathbb{T}} \left(\chi_{[-\frac{1}{N},\frac{1}{N}]}* \left|\tilde{b}\right|^4\right)(x) \,dy dx
\end{align*}
We first note that $\mbox{supp}(b^*)= \cup_{I \in \mathcal{B}_1} I^*$ implies $\mbox{supp}(|b^*|^4)= \cup_{I \in \mathcal{B}_1} I^*$, and 
\begin{align*}
\mbox{supp} \left(\chi_{[-\frac{1}{N},\frac{1}{N}]} *\left|b^*\right|^4\right) = [\frac{-1}{N},\frac{1}{N}] + \bigcup_{I \in \mathcal{B}_1} I^* \subset \bigcup_{I \in \mathcal{B}_1} 5\cdot I =E 
\end{align*}
Therefore,
\begin{align*}
N^2 \int_{\mathbb{T}\setminus E} \left(\chi_{[-\frac{1}{N},\frac{1}{N}]} *\left|b^*\right|^4\right)(x) \,dx=0
\end{align*}
and from the proof of Theorem \ref{first thing} we know that $\|\tilde{b}\|_{\infty} \lesssim \lambda$, so
\begin{align*}
N^2 \int_{\mathbb{T}\setminus E}  \left(\chi_{[-\frac{1}{N},\frac{1}{N}]}* \left|\tilde{b}\right|^4\right)(x) \,dx  &\lesssim N^2 \lambda^3 \int_{\mathbb{T}\setminus E}  \left(\chi_{[-\frac{1}{N},\frac{1}{N}]}* \left|\tilde{b}\right|\right)(x) \, dx\\
&\lesssim N^2\lambda^3 \frac{1}{N} \|\tilde{b}\|_1 \\
& \lesssim \lambda^3N 
\end{align*}
For $k_2$, if we suppress the $\chi_{[-\frac{1}{N},\frac{1}{N}]^c}$ for the moment, we have 
\begin{align*}
\sum_{n=1}^N \int_{\mathbb{T}\setminus E} |k_2*b^{(N)}_1(x)|^{4} \,dx &\lesssim \sum_{n=1}^N \int_{\mathbb{T}\setminus E} \left| \int_{\mathbb{T}} e(ny)\frac{b^{(N)}_1(x-y)}{\sin(y)} dy \right|^{4} \,dx \\
&\lesssim  \int_{\mathbb{T}\setminus E}\sum_{n \in \mathbb{Z}} \left| \int_{\mathbb{T}} e(ny) \frac{b^{(N)}_1(x-y)}{\sin(y)} dy \right|^{4} \,dx \\
&\lesssim  \int_{\mathbb{T}\setminus E} \sum_{n\in \mathbb{Z}}\left| \mathcal{F}\left(\frac{b^{(N)}_1(x - \cdot)}{\sin(\cdot)}\right)(n) \right|^4 \,dx
\end{align*}
where $\mathcal{F}$ denotes the Fourier Transform. Now we use the identity $\hat{f}(n)\hat{g}(n)= \widehat{f*g}(n)$ and Plancherel  
\begin{align*}
& \int_{\mathbb{T}\setminus E} \sum_{n\in \mathbb{Z}}\left| \mathcal{F}\left(\frac{b^{(N)}_1(x - \cdot)}{\sin(\cdot)}\right)(n) \right|^4 \,dx\\
&= \int_{\mathbb{T}\setminus E} \sum_{n\in \mathbb{Z}}\left| \mathcal{F}\left(\frac{b^{(N)}_1(x - \cdot)}{\sin(\cdot)}*\frac{b^{(N)}_1(x - \cdot)}{\sin(\cdot)}\right)(n) \right|^2 \,dx\\
&= \int_{\mathbb{T}\setminus E} \int_{\mathbb{T}}\left| \left(\frac{b^{(N)}_1(x - \cdot)}{\sin(\cdot)}*\frac{b^{(N)}_1(x - \cdot)}{\sin(\cdot)}\right)(y)\right|^2 \,dy dx 
\end{align*}
We note here that the convolution of $\chi_{[\frac{-1}{N},\frac{1}{N}]^c}(z)\frac{b_1^{(N)}(x-z)}{\sin(z)}$ with itself leaves us with no ability to bound the set on which we expect the convolution to be large independently of $N$, namely $\cup_{I \in \mathcal{B}_1}I +\cup_{I \in \mathcal{B}_1}I$ (for example, let $\{I\}$ be a set of intervals decreasing in size geometrically, then if $N\gg \lambda$, $\cup_{I \in \mathcal{B}_1}I +\cup_{I \in \mathcal{B}_1}I = \mathbb{T}$). We now use Young's inequality to bound the convolution
\begin{align*}
&\int_{\mathbb{T}\setminus E} \int_{\mathbb{T}}\left| \left(\frac{b^{(N)}_1(x - \cdot)}{\sin(\cdot)}*\frac{b^{(N)}_1(x - \cdot)}{\sin(\cdot)}\right)(y)\right|^2 \,dy dx\\
&\lesssim \int_{\mathbb{T}\setminus E} \left(\int_{\mathbb{T}} k_3(y_1)|b^{(N)}_1(x-y_1)| \,dy_1 \right)^2 \int_{\mathbb{T}} k_3^2(y_2)|b^{(N)}_1(x-y_2)|^2\,dy_2dx
 \end{align*}
where $k_3(x):=\chi_{[-\frac{1}{N},\frac{1}{N}]^c}(x)|x|^{-1}$.  We know from Corollary \ref{decay corollary} that  
\begin{align*}
\int_{\mathbb{T}\setminus E}  \int_{\mathbb{T}} k_3^2(y_2)|b^{(N)}_1(x-y_2)|^2\,dy_2dx \lesssim N\lambda,
\end{align*} 
so we look to show $\sup_{x \in \mathbb{T}\setminus E}\int_{\mathbb{T}} k_3(y_1)|b^{(N)}_1(x-y_1)| \,dy_1 \lesssim \lambda \log N$.  For any $x  \in \mathbb{T} \setminus E$,
\begin{align*}
\int_{\mathbb{T}} k_3(x-y)|b^{(N)}_1(y)|\,dy &\lesssim  \int_{\mathbb{T}} k_3(x-y)|b^*(y)|\,dy+\int_{\mathbb{T}} k_3(x-y)|\tilde{b}(y)|\,dy \\
&\lesssim \int_{\mathbb{T}} k_3(x-y)|b^*(y)|\,dy+\lambda \int_{\mathbb{T}} k_3(x-y)\,dy \\
&\lesssim  \int_{\mathbb{T}} k_3(x-y)|b^*(y)|\,dy+\lambda\log N.
\end{align*}
For $b^*$,
\begin{align*}
\int_{\mathbb{T}} k_3(x-y)|b^*(y)|\,dy \lesssim \sum_i \int_{\mathbb{T}} k_3(x-y)\left|\sum_{I \in \mathcal{B}_1 \atop I \subset \mathcal{C}_i} \chi_{I^*}(y)f^{(N)}_I(y)\right|\,dy.
\end{align*}
Just as we have done before, we use the fact that $x \in \mathbb{T} \setminus E$ and $y \in \cup_i \mathcal{C}_i= \cup_I I^*$ and therefore $|x-y|\gtrsim |J_i| \gtrsim |\mathcal{C}_i|$.  Thus,
\begin{align*}
&k_3(x-y) \lesssim \frac{1}{\mbox{max}(\mbox{dist}(x, \mathcal{C}_i), |\mathcal{C}_i|)} \lesssim \frac{1}{\mbox{dist}(x, \mathcal{C}_i)+ |\mathcal{C}_i|} \hspace{.5cm} \mbox{when } y \in \mathcal{C}_i
\end{align*}
and therefore
\begin{align*}
\sum_i \int_{\mathbb{T}} k_3(x-y) \left|\sum_{I \in \mathcal{B}_1 \atop I^* \subset \mathcal{C}_i} \chi_{I^*}(y) f_I^{(N)}(y)\right|\,dy &\lesssim \sum_i \int_{\mathbb{T}} k_3(x-y) \sum_{I \in \mathcal{B}_1 \atop I^* \subset \mathcal{C}_i} \chi_{I^*}(y) \left|f_I^{(N)}(y)\right|\,dy \\
&\lesssim \sum_i \int_{\mathbb{T}} k_3(x-y) \chi_{\mathcal{C}_i}(y) \sum_{I \in \mathcal{B}_1 \atop I^* \subset \mathcal{C}_i} \left|f_I^{(N)}(y)\right|\,dy \\
&\lesssim \sum_i   \frac{1}{\mbox{dist}(x, \mathcal{C}_i)+ |\mathcal{C}_i|}  \sum_{I \in \mathcal{B}_1 \atop I^* \subset \mathcal{C}_i} \left\|f_I^{(N)}\right\|_1 \\
&\lesssim \sum_i   \frac{|\mathcal{C}_i|}{\mbox{dist}(x, \mathcal{C}_i)+ |\mathcal{C}_i|} \frac{1}{|\mathcal{C}_i|} \sum_{I \in \mathcal{B}_1 \atop I^* \subset \mathcal{C}_i} |I|\frac{\|f_I\|_1}{|I|}.
\end{align*}
Now using the properties of the $f_I$ and the fact that the $\mathcal{C}_i$ are disjoint, we get
\begin{align*}
\sum_i   \frac{|\mathcal{C}_i|}{\mbox{dist}(x, \mathcal{C}_i)+ |\mathcal{C}_i|} \frac{1}{|\mathcal{C}_i|} \sum_{I \in \mathcal{B}_1 \atop I^* \subset \mathcal{C}_i} |I|\frac{\|f_I\|_1}{|I|} &\lesssim \lambda \sum_i   \frac{|\mathcal{C}_i|}{\mbox{dist}(x, \mathcal{C}_i)+ |\mathcal{C}_i|} \frac{\sum_{I \in \mathcal{B}_1 \atop I^* \subset \mathcal{C}_i} |I|}{|\mathcal{C}_i|} \\
&\lesssim \lambda \sum_i   \frac{|\mathcal{C}_i|}{\mbox{dist}(x, \mathcal{C}_i)+ |\mathcal{C}_i|} \\
& \lesssim \lambda \int_{\mathbb{T}} k_3(x-y)\,dy \lesssim \lambda \log N
\end{align*} 
From this we have
\begin{align*}
&\int_{\mathbb{T}} k_3(x-y)|b^{(N)}_1(y)|\,dy  \lesssim \int_{\mathbb{T}} k_3(x-y)|b^*(y)|\,dy+\lambda\log N \\
&\lesssim \lambda \log N
\end{align*}
Combining this with what we have already show for $b_1^{(N)}$, we have
\begin{align*}
\sum_{n=1}^N \int_{\mathbb{T}\setminus E} |k_2*b^{(N)}_1(x)|^{4}\,dx &\lesssim \int_{\mathbb{T}\setminus E} \int_{\mathbb{T}}\left| \left(\frac{b^{(N)}_1(x - \cdot)}{\sin(\cdot)}*\frac{b^{(N)}_1(x - \cdot)}{\sin(\cdot)}\right)(y)\right|^2 \,dy dx \\
&\lesssim \int_{\mathbb{T}\setminus E} \left( (k_3 *|b^{(N)}_1|)(x) \right)^2  (k_3^2*|b^{(N)}_1|^2)(x)\,dx \\
&\lesssim \sup_{x \in \mathbb{T} \setminus E}\left( (k_3 *|b^{(N)}_1|)(x) \right)^2\int_{\mathbb{T}\setminus E}  (k_3^2*|b^{(N)}_1|^2)(x) \,dx \\
&\lesssim (\lambda \log N)^2 N\lambda \\
\end{align*}
Finally, we have
\begin{align*} &\frac{1}{N} \sum_{n=1}^N \int_{\mathbb{T}\setminus E} |S_nf(x)|^4\,dx \\
 &\lesssim \frac{1}{N} \sum_{n=1}^N \int_{\mathbb{T}\setminus E} |S_ng(x)|^4\,dx +\frac{1}{N} \sum_{n=1}^N \int_{\mathbb{T}\setminus E} |S_nb_1(x)|^4\,dx+\frac{1}{N} \sum_{n=1}^N \int_{\mathbb{T}\setminus E} |S_nb_2(x)|^4\,dx\\
&\lesssim \lambda^3+ \frac{1}{N} (\lambda \log N)^2 N\lambda + \lambda^3\\
&\lesssim \lambda^3 (\log N)^2
\end{align*} 
which is exactly what we want.
For $m \in \mathbb{N}$ and $m > 2$, we have the same Hilbert transform estimate for $g$ and $b_2$.  For $b_1$
\begin{align*}
\sum_{n=1}^N \int_{\mathbb{T}\setminus E} |S_nb_1(x)|^{2m} \,dx &=\sum_{n=1}^N \int_{\mathbb{T}\setminus E} |(S_nb_1(x))^m|^{2} \,dx\\ &\lesssim \sum_{n=1}^N \int_{\mathbb{T}\setminus E} \left|\prod_{i=1}^m \int_{\mathbb{T}} k_2(y_i)b_1(x-y_i)\,dy_i \right|^{2} \,dx \\
&\lesssim \int_{\mathbb{T}\setminus E} \int_{\mathbb{T}} |F_{x} * F_{x} * \cdots * F_{x}(y)|^{2} \,dy dx \\
& \lesssim \int_{\mathbb{T} \setminus E} \left(\int_{\mathbb{T}} |F_x(y_1)|\,dy_1\right)^2  \int_{\mathbb{T}} |F_{x} * F_{x} * \cdots * F_{x}(y_2)|^{2} \,dy_2 dx
\end{align*}
where $F_x(y)=\chi_{[-\frac{1}{N},\frac{1}{N}]^c}(x-y)\frac{b_1(y)}{\sin(x-y)}$, the second to last line contains $m-1$ convolutions, and the last line contains $m-2$ convolutions.  A small note to take into account is that the second inequality gives the bound a dependence on $m$ along with the following calculations.  Then
\begin{align*}
&\int_{\mathbb{T} \setminus E} \left(\int_{\mathbb{T}} |F_x(y_1)|\,dy_1\right)^2  \int_{\mathbb {T}} |F_{x} * F_{x} * \cdots * F_{x}(y_2)|^{2} \,dy_2 dx \\
&\lesssim \sup_{x  \in \mathbb{T} \setminus E} \left(\int_{\mathbb{T}} k_3(x-y)|b^{(N)}_1(y)|\,dy_1\right)^2 \int_{\mathbb{T} \setminus E} \int_{\mathbb{T}} |F_{x} * F_{x} * \cdots * F_{x}(y_2)|^{2} \,dy_2 dx \\
&\lesssim \lambda^2(\log N)^2 \int_{\mathbb{T} \setminus E} \int_{\mathbb{T}} |F_{x} * F_{x} * \cdots * F_{x}(y_2)|^{2} \,dy_2 dx
\end{align*}
and the rest of the even powers follow from repeating this procedure.  Interpolation gives the complete result.
\end{proof}
{\bf Remark:}
At the moment, the proof of Proposition \ref{p>2} is the best argument that is possible using the techniques of Theorem \ref{first thing}, but Proposition \ref{p>2} may not be the best formulation possible.
\section{Conclusion}
In conclusion, let us first revisit Theorem \ref{strong sums} by applying Theorem \ref{first thing} to the statement on convergence in measure of strong arithmetic means of Fourier series. 
\begin{proof}[Proof of Theorem \ref{strong sums} for $r=2$:] 
Define the operator
\begin{align*}
A_{N,2}f(x) := \sqrt{ \frac{1}{N}\sum_{n=1}^N |S_nf(x)|^2}.
\end{align*}
then by Theorem \ref{first thing}, $\sup_{N\geq 1} |\{ A_{N,2}f(x)>\lambda\}| \leq \frac{C\|f\|_1}{\lambda}$.  Fix $f \in L^1(\mathbb{T})$. We note that for any trigonometric polynomial $g$, it is obvious that  $\lim_{N \rightarrow \infty} \frac{1}{N}\sum_{n=1}^N |S_ng(x)-g(x)|^2 =0$ uniformly in $\mathbb{T}$.  Now let $\epsilon>0$ and let $g$ be a trigonometric polynomial such that $\|g-f\|_1 <\epsilon$.  Then $\frac{1}{M}\sum_{n=1}^M |S_ng(x)-g(x)|^2 \rightarrow 0$ as $M \rightarrow \infty$ uniformly in $\mathbb{T}$.  Therefore, let $N$ be such that $\frac{1}{M}\sum_{n=1}^M |S_ng(x)-g(x)|^2 <\sqrt{\epsilon}$ for all $M\geq N$.  Then
\begin{align*}
&\sup_{M\geq N}\left|\left\{ x \in \mathbb{T}| \sqrt{ \frac{1}{M}\sum_{n=1}^M |S_nf(x)-f(x)|^2}>\sqrt{\epsilon}  \right\}\right| \\
&\lesssim \sup_{M\geq N}\left|\left\{ \sqrt{ \frac{1}{M}\sum_{n=1}^M |S_nf(x)-S_ng(x)|^2}>\sqrt{\epsilon}  \right\}\right| \\
&\hspace{1cm}+\left|\left\{ \sup_{M\geq N}\sqrt{\frac{1}{M} \sum_{n=1}^M |g(x)-f(x)|^2}>\sqrt{\epsilon}  \right\}\right|\\
&\hspace{2cm}+\left|\left\{ \sup_{M\geq N}\sqrt{\frac{1}{M} \sum_{n=1}^M |S_ng(x)-g(x)|^2}>\sqrt{\epsilon}  \right\}\right|
\end{align*}
The second summand loses its dependence on $N, M$, and since $\frac{1}{M}\sum_{n=1}^M |S_ng(x)-g(x)|^2 <\sqrt{\epsilon}$, the last summand vanishes.  Then by removing a set $E$ from the first summand and using Markov's inequality we have
\begin{align*}
& \sup_{M}\left|\left\{ \sqrt{ \frac{1}{M}\sum_{n=1}^M |S_n(f-g)(x)|^2}>\sqrt{\epsilon}  \right\}\right| +\left|\left\{  |g(x)-f(x)|>\sqrt{\epsilon}  \right\}\right| \\
&\lesssim |E| +\frac{1}{\epsilon} \left(\sup_M \frac{1}{M} \sum_{n=1}^M \int_{\mathbb{T} \setminus E} |S_n(f-g)(x)|^2 \,dx \right)+ \frac{\|f-g\|_1}{\sqrt{\epsilon}}\\
&\lesssim  \sqrt{\epsilon}  +\frac{1}{\epsilon} \left(\frac{1}{\sqrt{\epsilon}}\|f-g\|_1^2 \right)+ \frac{\|f-g\|_1}{\sqrt{\epsilon}} \\
&\lesssim \sqrt{\epsilon}
\end{align*}
where the final inequalities come from letting $|E| \lesssim \sqrt{\epsilon}$. Thus,
\begin{align*}
\sup_{M\geq N}\left|\left\{ x \in \mathbb{T} \left|  \sqrt{ \frac{1}{M}\sum_{n=1}^M |S_nf(x)-f(x)|^2}>\sqrt{\epsilon} \right. \right\}\right| \lesssim \sqrt{\epsilon}
\end{align*} 
and this implies to the convergence in measure.  Thus, applying the fact that convergence in measure implies convergence almost everywhere up to a choice of subsequence ends the proof.
\end{proof}
This argument can be applied to Theorem \ref{strong sums high d} by utilizing Proposition \ref{high d}.  Therefore, let us employ Theorem \ref{strong sums high d} to prove Corollary \ref{converge high d}. The following is basically an extension of the $d=1$ case shown in (\cite{zygmund59}, Ch. 13):
\begin{proof}[Proof of Corollary \ref{converge high d}:]
Fix $f \in L^1(\mathbb{T}^d)$ by Theorem \ref{strong sums high d} there exists a subsequence $\{N_k\}$ so that for almost every $\theta \in \mathbb{T}^d$
\begin{align}\label{converge}
\frac{1}{N_k^d} \sum_{\overline{n} \in R^+_{N_k}} \left|S_{\overline{n}} f(\theta)-f(\theta)\right|^2 \xrightarrow{k \rightarrow \infty} 0.
\end{align}
Fix a $\theta \in \mathbb{T}^d$ for which convergence is satisfied and let $s_{\overline{n}} := S_{\overline{n}}f(\theta)$ and $s:=f(\theta)$.  Fix $\epsilon>0$. By \eqref{converge}, there exists $k_0$ such that for $k\geq k_0$
\begin{align*}
\frac{1}{N_k^d} \sum_{\overline{n} \in R^+_{N_k}} \left|s_{\overline{n}}-s\right|^2 <\epsilon^3.
\end{align*}
For any $N \in \mathbb{N}$, let $\nu(N)$ be the number of lattice points $\overline{n} \in R^+_N$ where $|s_{\overline{n}}-s|\geq \epsilon$ and let $\mu(N)$ be the number of lattice points $\overline{n} \in R^+_N$ where $|s_{\overline{n}}-s| <\epsilon$.  Then for $k\geq k_0$
\begin{align*}
\epsilon^3>\frac{1}{N_k^d} \sum_{\overline{n} \in R^+_{N_k}} \left|s_{\overline{n}}-s\right|^2 \geq \frac{\epsilon^2\nu(N_k)}{N_k^d}.
\end{align*}
Therefore, for $k \geq k_0$
\begin{align}\label{density}
\epsilon> \frac{\nu(N_k)}{N_k^d} \hspace{.5cm} \mbox{and} \hspace{.5cm} \frac{\mu(N_k)}{N_k^d} \geq 1-\epsilon.
\end{align}
For $m=1, 2, ...$, let $k_m\in  \mathbb{N}$ be defined so that $k_{m-1}<k_m$ and for $k \geq k_m$,  $\frac{1}{N_k^d} \sum_{\overline{n} \in R^+_{N_k}} \left|s_{\overline{n}}-s\right|^2 <\frac{1}{m^3}$.  Let $S_m$ be the set of lattice points such that $|s_{\overline{n}}-s|<\frac{1}{m}$ then
\begin{align}\label{inclusion}
S_1 \supset S_2 \supset \cdots \supset S_m \supset \cdots.
\end{align}
For each $m$, we will order the set $S_m \cap (R^+_{N_{k_{m+1}}}\setminus R^+_{N_{k_m}})$ so that 
\begin{align*}
\bigcup_{m \geq 1} S_m \cap (R^+_{N_{k_{m+1}}}\setminus R^+_{N_{k_{m}}})=:S=(s_{\overline{n}_k})
\end{align*}
is a sequence.  It is obvious that 
\begin{align*}
\lim_{k \rightarrow \infty} |s_{\overline{n}_k}-s| =0.
\end{align*}
It remains to show that
\begin{align*}
\lim_{k \rightarrow \infty} \frac{\#(S \cap [0, N_k]^d)}{N_k^d} =1.
\end{align*}
For a fixed $k \in \mathbb{N}$, let $\ell$ be such that $k_{\ell}< k\leq k_{\ell+1}$.  By the definition of $S$ and \eqref{inclusion}, $S \cap [0, N_k]^d \supset S_{\ell} \cap [0, N_k]^d$ and
\begin{align*}
\frac{\#(S \cap [0, N_k]^d)}{N_k^d} \geq \frac{\#(S_{\ell} \cap [0, N_k]^d)}{N_k^d}.
\end{align*}
By \eqref{density}
\begin{align*}
\frac{\#(S_{\ell} \cap [0, N_k]^d)}{N_k^d} \geq 1-\frac{1}{\ell}.
\end{align*}
Of course, $\frac{\#(S \cap [0, N_k]^d)}{N_k^d} \leq \frac{(N_k+1)^d}{N_k^d}$ and $\ell \rightarrow \infty$ as $N_k \rightarrow \infty$ so
\begin{align*}
\lim_{k \rightarrow \infty} \frac{\#(S \cap [0, N_k]^d)}{N_k^d} =1
\end{align*}
and we are done.
\end{proof}

\bibliographystyle{amsplain}

\begin{thebibliography}{10}

\bibitem{carl66}
{L.~Carleson}, {\em On convergence and growth of partial sums of Fourier series}, {Acta Math.} {\em 116} {(1966)}, {133-157}.

\bibitem{gog88}
{L.~D.~Gogoladze}, {\em On the strong summability almost everywhere}, {Sb. Math.} {\bf 135} {(1988)}, {158-169}.

\bibitem{hard13}
{G.~H.~Hardy}, {\em On the summability of Fourier's series}, {Proc. London Math.} {\bf 12} {(1913)}, {365-372}.

\bibitem{hun68}
 {R.~A.~Hunt}, {\em On the convergence of Fourier series}, {Orthogonal expansions and their continuous analogues},  {Southern Ill. University Press, Carbondale, Ill.},  {1968}, {pp. 235-255}.

\bibitem{kar06}
{G.~A.~Karagulyan}, {\em Everywhere divergent $\Phi$-means of Fourier series}, {Mathematical Notes} {\bf 8} {2006}, {no. 1}, {47-56}.

\bibitem{kolm23}
{A.~Kolmogoroff}, {\em Une s\'{e}rie de Fourier-Lebesgue divergente presque partout}, {Fund. Math.} {\bf 4} {(1923)},  {324-328}.

\bibitem{kon04}
{S.~V.~Konyagin}, {\em Convergent subsequences of partial sums of Fourier series of $\Phi(L)$}, {Orlicz centenary volume}, {Banach Center Publ. 64, Institute of Mathematics, Polish Academy of Sciences, Warsaw}, {2004}, {pp. 117-126}.

\bibitem{kon06}
{S.~V.~Konyagin}, {\em Almost everywhere convergence and divergence of Fourier series}, {Proceedings of the International Congress of Mathematicians, Madrid}, {2006}, {pp. 1393-1402}.

\bibitem{schlag13}
{C.~Muscalu and W.~Schlag}, {\em Classical and multilinear harmonic analysis}, {Cambridge Studies in Advanced Mathematics, No.137}, {vol. 1}, {2013}.

\bibitem{2schlag13}
{C.~Muscalu and W.~Schlag}, {\em Classical and multilinear harmonic analysis}, {Cambridge Studies in Advanced Mathematics, No.138}, {vol. 2}, {2013}.

\bibitem{rod90}
{V.~A.~Rodin}, {\em The space BMO and strong means of the Fourier series}, {Ann. Math.} {\bf 16} {(1990)}, {291-302}.

\bibitem{zygmund59}
{A.~Zygmund}, {\em Trigonometric series}, {second ed}, {vol. 1,2}, {Cambridge University Press}, {1959}.


\end{thebibliography}

\end{document}